\documentclass[11pt]{amsart}
\usepackage[utf8]{inputenc}
\usepackage[english]{babel}
\usepackage{amssymb,enumerate,amsmath,amscd,txfonts,tikz}
 \usepackage{graphicx}
\usepackage[colorlinks=true,linkcolor=blue,citecolor=blue,urlcolor=blue]{hyperref}
\numberwithin{equation}{section}
\usepackage{tikz}
\usetikzlibrary{arrows,shapes,trees,backgrounds}

\usepackage{lmodern}

\newtheorem{theorem}{Theorem}[section]
\newtheorem{proposition}[theorem]{Proposition}
\newtheorem{lemma}[theorem]{Lemma}

\newtheorem{claim}[theorem]{Claim}

\theoremstyle{remark}
\newtheorem{definition}[theorem]{Definition}
\newtheorem{remark}[theorem]{Remark}
\newtheorem{example}[theorem]{Example}

\begin{document}
\title[Gromov-Hausdorff limit of orthonormal frame bundles]{Gromov-Hausdorff limit of orthonormal frame bundles of non-collapsed manifolds with bounded Ricci curvature}
\author{Cuifang Si\textsuperscript{*}}
\address{Cuifang Si \newline \indent School of Mathematical Sciences \newline \indent Capital Normal University, Beijing 100048, China}
\email{\href{mailto:2210501004@cnu.edu.cn}{2210501004@cnu.edu.cn}}

\author{Shicheng Xu\textsuperscript{\dag}}

\address{Shicheng Xu\newline \indent School of Mathematical Sciences\newline \indent Capital Normal University, Beijing 100048, China}
\address{Academy for Multidisciplinary Studies
\newline \indent Capital Normal University, Beijing 100048,  China}
\email{\href{mailto:shichengxu@gmail.com}{shichengxu@gmail.com}}

\date{\today}
\subjclass[2010]{53C23, 53C21, 53C20.}
\keywords{}
\begin{abstract}
	Let $M_i$ be a sequence of non-collapsed $n$-manifolds with two-sided bound Ricci curvature.
	We show that the Gromov-Haudorff limit space, $Y$, of the associated sequence of orthonormal frame bundles, $FM_i$, equipped with an almost canonical metric, shares similar properties as a Ricci limit space of non-collapsing sequence i.e., the singular set has codimension $\ge 4$ whose complement is a $C^{1,\alpha}$-manifold.
\end{abstract}
{\maketitle}

\section{Introduction}
The limit spaces of Riemannian manifolds with lower bounded Ricci curvature under Gromov-Hausdorff topology have taken an important role in the study of geometry and topology of manifolds. 
For the purpose of this paper, we mention the following work on the Ricci limit space $(X,d)$ of a non-collapsing sequence of Riemannian $n$-manifolds $(M_i,g_i)$ with Ricci curvature $ \operatorname{Ric}_{g_i}\ge -(n-1) $, where the volume of $1$-balls $\operatorname{Vol}(B_1(p_i))\ge v>0$, $ p_i\in M_i $.  
By Cheeger-Colding \cite{Cheeger-Colding1997,Cheeger-Colding2000-1}, the singular set $S_X$ of $X$ has Hausdorff codimension $\ge 2$ and has a stratification. If in addition, the $ n $-manifolds admit a $ 2 $-sided Ricci curvature bound, then the limit space has a better regularity.
In the early works \cite{Anderson1989, BKN1989, Tian1990}, a limit space of non-collapsed $ 4 $-dimensional Einstein manifolds with diameter, topology or geometry assumptions can only have isolated orbifold singularities. 
By Cheeger-Naber \cite{Cheeger-Naber2015}, the singular set $S_X$ of a limit space $(X,d)$ of a non-collapsing sequence of $ n $-manifolds $ (M_i,g_i) $ with $ |\operatorname{Ric}_{g_i}|\le n-1 $ has codimension $\ge 4 $.  Later, by a series of works (\cite{Jiang-Naber2021}, \cite{Cheeger-Jiang-Naber2021}, c.f. \cite{Cheeger-Colding-Tian2002}), the singular set $ S_X $ is actually $ (n-4) $-rectifiable, and for $n-4$ a.e. the tangent cone at $ x\in S_X $ is unique and isometric to $ \mathbb{R}^{n-4} \times C(S^3/\Gamma_x) $, $ \Gamma_x\le O(4) $ is finite and acts freely on $S^3$.

The orthonormal frame bundle $ FM $ of a Riemannian manifold $ (M,g) $ carries information of topology and geometry of the manifold. For example, it plays a key role in Cheeger-Fukaya-Gromov's collapsing theory under bounded sectional curvature  \cite{Cheeger-Fukaya-Gromov1992}. In general, the curvature tensor of canonical lifting metric $ \tilde{g}_{can} $ on the orthonormal frame bundle $FM$ depends on the first derivative of curvature tensor of the base manifold $ (M,g) $, and thus $ (FM, \tilde{g}_{can}) $ does not admit a uniform bounded Ricci curvature under the assumption that the sectional or Ricci curvature of $ (M,g) $ is bounded (e.g. see Si \cite[Proposition 4.6]{Si2021} for a counterexample).

In this paper, we study the Gromov-Hausdorff limit $ (Y,d_Y) $ of a sequence of orthonormal frame bundles $ FM_i $ of non-collapsed $ n $-manifolds $ (M_i,g_i) $ with $ |\operatorname{Ric}_{g_i}|\le n-1 $ and extend Cheeger-Naber's codimension four theorem \cite{Cheeger-Naber2015} to $ (Y,d_Y) $. In order to improve regularity,
we endow a new $ O(n) $-invariant metric $ {\tilde{g}}_i $ on each orthonormal frame bundle $ FM_i $. Then, the limit space $ (Y,d_Y) $ of $ (FM_i,\tilde{g}_i) $ admits similar properties as a Ricci limit space. Let the points in $ (Y,d_Y) $ be divided into two parts, the regular set $ R_Y $ and the singular set $ S_Y $, i.e., 
a point $y\in Y$ is called regular if tangent cones at $y$, which are the Gromov-Hausdorff limits of $(Y,y,\lambda_id_Y)$ for $ \lambda_i \to \infty$, exist and are isometric to the Euclidean space $ \mathbb{R}^{k} $, $ k=n+\frac{n(n-1)}{2} $, otherwise, $y$ is called singular.

\begin{theorem}\label{thm-main}
	Let $(M_i,g_i)$ be a sequence of compact $n$-manifolds with $|\operatorname{Ric}_{g_i}|\le n-1$, volume $ \operatorname{Vol}(M_i,g_i)\ge v>0 $ and diameter $\operatorname{diam}(M_i,g_i)\le D$ that Gromov-Hausdorff converges to $(X,d_X)$, $ (M_i,g_i) \xrightarrow{\text{GH}} (X,d_X) $. 
	There is an $O(n)$-invariant metric $\tilde g_i$ of $FM_i$ such that the canonical projection $\pi_i:(FM_i,\tilde g_i)\to (M_i,g_i)$ is a Riemannian submersion.
	By passing to a subsequence, the following commutative diagram on equivariant Gromov-Hausdorff convergence of $ (FM_i,\tilde{g}_i) $ 
	$$\begin{CD}
		(FM_i, \tilde g_i, O(n))@>\text{eqGH}>> (Y, d_Y, O(n))\\
		@VV \pi_i V @VV \pi_\infty V\\
		(M_{i},g_i)@>\text{GH}>> (X,d_X),
	\end{CD}
	$$
	satisfies \\	
	{\rm (1)} the limit space $X$ of $ (M_{i},g_i) $ is isometric to the quotient $Y/O(n)$. For any $y\in Y$ and $x=\pi_\infty(y)$  there is a $1$-Lipschitz equivariant homeomorphism from $(O(n),b)/O(n)_y$ to the orbit fiber $\pi_\infty^{-1}(x)$, where $O(n)_y$ is the isotropy group at $y$, and $ (O(n),b) $ is the orthogonal Lie group with a bi-invariant metric $ b $.\\
	{\rm (2)} If the isotropy group of $O(n)$ at $y$ is non-trivial, then $x=\pi_\infty(y)$ is a singular point in $X$. If $O(n)_y$ is not finite, then $y$ is a singular point in $Y$.\\
	{\rm (3)} The singular set $S_Y$ of $Y$ is contained in the preimage $\pi_{\infty}^{-1}(S_X)$ of the singular set of $X$, where $\pi_{\infty}^{-1}(S_X)$ in $Y$ is of codimension $\ge 4$, and the subset $\pi_\infty^{-1}(R_X)$ is a $C^{1,\alpha}$-Riemannian manifold. \\
	{\rm (4)} for any $ y_i\in FM_i $, if $ d(\pi_i(y_i),S_X) \ge r>0 $, then $y_i$ admits a uniform Ricci curvature bound depending on $ r,n $. 
\end{theorem}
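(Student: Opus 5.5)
The construction of $\tilde g_i$ comes first; everything else follows from it. The canonical lifted metric fails because its curvature involves $\nabla \operatorname{Rm}_{g_i}$, which the hypotheses do not control. I would therefore use the Levi-Civita connection of a smoothed metric to define the horizontal distribution. Concretely, by Anderson's harmonic radius estimates together with Cheeger--Naber's $\epsilon$-regularity, on the part of $M_i$ where the harmonic radius $r_h(x)$ is at least some $r$, $g_i$ is $C^{1,\alpha}$ (indeed $W^{2,p}$) controlled in harmonic coordinates. I would run a short-time Ricci flow, or take a mollified metric $h_i$ that is $C^0$-close to $g_i$ and has $|\nabla^k \operatorname{Rm}_{h_i}|$ bounded in terms of the local harmonic radius. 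Its connection form $\omega_{h_i}$, corrected by the endomorphism $\sqrt{h_i^{-1}g_i}$ to the $g_i$-orthonormal frame bundle, gives an $O(n)$-equivariant connection $\theta_i$ on $FM_i$. I then set $\tilde g_i = \pi_i^*g_i + b(\theta_i,\theta_i)$, with $b$ bi-invariant on $\mathfrak{o}(n)$. By construction $\pi_i$ is a Riemannian submersion with totally geodesic fibers isometric to $(O(n),b)$, and $O(n)$ acts by isometries. O'Neill's formulas express $\operatorname{Ric}_{\tilde g_i}$ through $\operatorname{Ric}_{g_i}$, the curvature $\Omega_i$ of $\theta_i$, and $\nabla\Omega_i$. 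These are bounded by derivatives of $\operatorname{Rm}_{h_i}$ and of $g_i$ in harmonic coordinates, both controlled by $r_h$. This gives (4), since $d(\pi_i(y_i),S_X)\ge r$ implies a harmonic radius lower bound, by Cheeger--Naber's codimension-$4$ theorem and its $\epsilon$-regularity.

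For (1): diameters of $(FM_i,\tilde g_i)$ are uniformly bounded, because horizontal lifts give length $\le D$ and fibers have diameter $\operatorname{diam}(O(n),b)$. Volume doubling holds on the regular part and is also uniform globally, since the fibers are isometric. Equivariant precompactness (Fukaya--Yamaguchi) then yields the subsequence and the diagram, with $Y/O(n)\cong X$. For $y$ the limit of $y_i$, the orbit maps $O(n)\to FM_i$, $g\mapsto y_i g$, are $1$-Lipschitz because each fiber is isometric to $(O(n),b)$ and the fiber distance dominates the ambient distance. These maps converge to a $1$-Lipschitz equivariant surjection $O(n)\to \pi_\infty^{-1}(x)$. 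It factors through $O(n)_y$, and it is a homeomorphism by compactness of $O(n)/O(n)_y$.

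For (2) and (3): over $R_X$, Anderson's $C^{1,\alpha}$ convergence of $g_i$ in harmonic coordinates, together with the uniform Ricci bound from (4), gives $C^{1,\alpha}$ convergence of $\tilde g_i$ on $\pi_i^{-1}$ of balls in the harmonic-radius-controlled region. Anderson's theorem applies again on $FM_i$, because the volume of the lifted balls is non-collapsed. Hence $\pi_\infty^{-1}(R_X)$ is a $C^{1,\alpha}$ principal $O(n)$-bundle with a Riemannian submersion metric, so isotropy there is trivial and points are regular; this proves $S_Y\subset\pi_\infty^{-1}(S_X)$ and the first half of (2). For the codimension bound, $\pi_\infty$ is $1$-Lipschitz on orbits of dimension $\le \dim O(n)$, so $\dim_H\pi_\infty^{-1}(S_X)\le \dim_H S_X+\frac{n(n-1)}2\le k-4$. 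For the second half of (2), if $O(n)_y$ is infinite the orbit has dimension less than $\frac{n(n-1)}2$. A Euclidean tangent cone at $y$ would force a Euclidean tangent cone at $x$ for the quotient of $\mathbb R^k$ by a group with nontrivial identity component, and such a quotient cannot be $\mathbb R^n$ isometrically, so $y$ is singular.

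The main obstacle is the first step. The connection $\theta_i$ must have curvature with derivative bounds controlled by the harmonic radius only, while still producing an honest Riemannian submersion onto $g_i$ itself rather than onto $h_i$. I would first try Ricci-flow smoothing at scale $r_h(x)^2$ using a partition of unity adapted to the harmonic-radius function, and check the equivariance and gluing errors.
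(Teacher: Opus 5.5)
\textbf{Gap: the second half of (2).} The step that fails is the claim that an infinite isotropy group $O(n)_y$ forces $y\in S_Y$. Both implications in your sketch are false.

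First, a Euclidean tangent cone at $y$ does not force a Euclidean one at $x$. With $V=\mathbb{R}^{n(n-1)/2}$, the quotient of $V\times\mathbb{R}^n$ by $V\times\{\pm I\}$ is the singular cone $\mathbb{R}^n/\{\pm I\}$. Moreover, in your situation $x\in S_X$ is already known from the first half of (2), so ``$\mathbb{R}^k/G\neq\mathbb{R}^n$'' contradicts nothing.

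Second, a quotient of $\mathbb{R}^k$ by a group whose stabilizer at $0$ has positive dimension can be $\mathbb{R}^n$, e.g.\ $(V\times\mathbb{R}^n)/(V\rtimes O(V))=\mathbb{R}^n$.

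Third, the smaller dimension of the orbit $y\cdot O(n)$ does not survive the blow-up. The orbit maps are $1$-Lipschitz only for $b$, so they are merely $\lambda_j$-Lipschitz after rescaling, and a compact set of small Hausdorff dimension can have tangent sets of larger dimension.

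What is missing is structural control of the limit group $G$. Claim~\ref{clm-singular-cone} obtains it from the nearby regular fibers. These are totally geodesic copies of $(O(n),b)$ that flatten under rescaling, so $\exp(\xi/\lambda_j)$ converges to translations along a subspace $V$. The claim then invokes $\dim(\mathbb{R}^k/G)=n$.

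Even that count does not finish the job. Grant that $\exp(\xi/\lambda_j)$ converges to the translation $\tau_{v(\xi)}$, and write $R_a(z)=za$. For $h\in O(n)_y$ the identity $R_h\circ R_{\exp(\xi/\lambda_j)}=R_{\exp(\mathrm{Ad}_{h^{-1}}\xi/\lambda_j)}\circ R_h$ passes to the limit. It shows that any limit of $h$ acts on $V$ by $\mathrm{Ad}_{h^{-1}}$. By the example above, this is compatible with an $n$-dimensional quotient. So excluding an infinite $O(n)_y$ needs an argument beyond this dimension count.

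\textbf{The rest of the proposal.} Everything else follows the paper:
the metric $\pi_i^*g_i+b(\theta_i,\theta_i)$, with $\theta_i$ the connection of a smoothed metric transported by $\alpha^{1/2}$;
O'Neill's formulas, with $\nabla^{\epsilon}R^{\epsilon}$ and $\nabla-\nabla^{\epsilon}$ controlled by the harmonic radius;
$C^{1,\alpha}$ convergence over $R_X$;
and the codimension count.
The step you single out as the main obstacle is exactly Proposition~\ref{prop-nearby-metric} (Petersen--Wei--Ye smoothing on harmonic balls, glued as in Cheeger--Tian). Being a Riemannian submersion onto $g_i$ itself is automatic for any connection.

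For (1) you take a genuinely different and simpler route, and it is correct. Limits of the $1$-Lipschitz orbit maps $g\mapsto y_ig$ directly give the $1$-Lipschitz equivariant homeomorphism $(O(n),b)/O(n)_y\to\pi_\infty^{-1}(x)$. The paper instead passes through the holonomy formula for the restricted fiber metric (Lemmas~\ref{lem-FM-dist}--\ref{lem-FM-fiber-limit}). That costs more, but it also identifies $O(n)_y$ with $H_{\infty,x}$ up to conjugacy, which Theorem~\ref{thm-FX-cncd-Y} needs.

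Three smaller points need repair.
\begin{itemize}
\item Precompactness needs no volume doubling, which you assert without proof. Take one point over each element of an $\epsilon$-net of $M_i$ and translate it by an $\epsilon$-net of $(O(n),b)$. Using horizontal lifts of short geodesics, this gives a $2\epsilon$-net of $FM_i$ of uniformly bounded size.
\item Bounded Ricci curvature plus non-collapsing does not by itself give Anderson's $C^{1,\alpha}$ control; the $M_i$ themselves show this. Over $M_{i,r_0}$ one uses, as in Proposition~\ref{prop-regular-pt}, that $\tilde g_i$ converges in $C^0$ to a continuous metric, so small balls are uniformly almost Euclidean.
\item Hausdorff dimension is not additive over fibers in general, so $\dim_H\pi_\infty^{-1}(S_X)\le\dim_H S_X+\frac{n(n-1)}{2}$ should come from covering numbers. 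Every fiber is a $1$-Lipschitz image of $(O(n),b)$ and $\pi_\infty$ is a submetry. Hence each ball $B_{\rho_j}(x_j)$ in a cover of $S_X$ can be replaced by $C\rho_j^{-n(n-1)/2}$ balls of radius $2\rho_j$ covering its preimage.
\end{itemize}
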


\begin{remark}\label{rmk-main-thm}
	~
	
	\begin{enumerate}
		\item In general $(Y,d_Y)$ is not a Ricci limit space, and a tangent cone at a singular point in $(Y,d_Y)$ does not exist; see Example \ref{ex-Eguchi-Hanson}. 
		\item Compared with $ (M_i,g_i) $, the regularity of $ (FM_i,\tilde{g}_i) $ is preserved much better than the canonical lifting metric $\tilde g_{i,can}$ on $ FM_i $, such that after removing $\pi_\infty^{-1}(S_X)$ of codimension $\ge 4$, $ (Y,d_Y) $ is locally a regular Ricci limit space and the most part of $ (FM_i,\tilde{g}_i) $ is of bounded Ricci curvature.
		\item \label{rmk-new-metric} The new lifting metric $ \tilde{g} $ on $ FM $ is defined as follows. By smoothing $ g $ on $ M $ to a global nearby metric $ g_{\epsilon} $, such that the norm of any $ k $-th ordered covariant derivatives of curvature tensor around a point $ x\in M $ depends on the $ C^{1,\alpha} $-harmonic radius at $x$ in $ (M,g) $, the metric $ \tilde{g} $ on $ FM $ is defined by lifting $ g $ along the horizontal distribution induced by the Levi-Civita connection $ \nabla^\epsilon $ of $ g_\epsilon $ on $ FM$; for details see Section \ref{sct-3}. By the smoothing technique in Petersen-Wei-Ye \cite{Petersen-Wei-Ye1999}, Cheeger-Tian \cite{Cheeger-Tian2006} (for details see Proposition \ref{prop-nearby-metric} in Section \ref{subsct-smoothing-metric}), the difference of the two Levi-Civita connections of $g_\epsilon$ and $g$, $\left|\nabla^\epsilon-\nabla \right|$, pointwise relies on the $C^{1,\alpha}$-harmonic radius of each point in $(M,g)$. So is the difference between $\tilde g$ and $\tilde g_{can}$.
		\item As a partial motivation, all conclusions in the paper provide technical tools effective for a limit space of a collapsing sequence of $n$-manifolds $(M_i,g_i)$ with bounded Ricci curvature and local volume bounded covering gemetry, i.e. there is $\rho>0$ such that for any $x_i\in M_i$, the universal covering space $(\widetilde{B_\rho(x_i)},\tilde x_i)$ of $\rho$-ball $B_\rho(x_i)$ in $M_i$ satisfies $\operatorname{Vol}(B_\rho(\tilde x_i))\ge v>0$. Indeed, for a sequence $x_i\in M_i\to x\in X$, the orthonormal frame bundle  $B_\rho(\tilde x_i)$ converge to a limit space $(\tilde Y,d_{\tilde Y})$ in Theorem \ref{thm-main}, such that the limit $(Y,d_Y)$ of the collapsing orthonormal frame bundle of $B_\rho(x_i)$ is an isometric quotient of $(\tilde Y,d_{\tilde Y})$, which is a $C^{1,\alpha}$-Riemannian manifold after removing a lower dimensional closed subset. In particular, if $(M_i,g_i)$ has bounded sectional curvature, then $(FM_i, \tilde g_i)$ is of bounded sectional curvature and its limit is a $C^{1,\alpha}$-Riemannian manifold, which leads to an improvement of Fukaya's singular fibration theorem \cite{Fukaya1988}.
	\end{enumerate}
	
\end{remark}

Up to conjugacy in $O(n)$, the isotropy group $O(n)_y$ at $y\in Y$ is determined by the \emph{infinitesimal holonomy group} $H_{\infty,x}$ that is passed from $(M_i,g_i)$ at $x=\pi_\infty(y)\in X$, where the definition of $H_{\infty,x}$ is given as follows.
For a sequence of points $ p_i \in M_i$ converging to $ x\in X $, the fiber $ (\pi_i^{-1}(p_i),d_{i,p_i}) $ with the restricted metric $ d_{i,p_i} $ from $ (FM_i,\tilde{g}_i) $ converges to a psuedo metric space $ (O(n),d_{\infty,x}) $.
Let the holonomy group $\operatorname{Hol}_{p_i}(\nabla^{i,\epsilon})$ at $p_i\in (M_i,g_i)$ (with respect to the Levi-Civita connection $\nabla^{i,\epsilon}$ of the smoothed metric $g_{i,\epsilon}$) be viewed as a subgroup of $O(n)$,  and $$H_{0,x}=\left\{ \lim_{i\to \infty} a_i\in O(n) \left| a_i\in \operatorname{Hol}_{p_i}(\nabla^{i,\epsilon}), \lim_{i\to \infty}L(a_i)=0 \right. \right\},$$ be the subset consisting of the limit of holonomy $a_i$ at $p_i$ whose underlying loops have minimal length $L(a_i)\to 0$. By Lemma \ref{lem-O(n)-limit} (c.f. Sol\'{o}rzano \cite{Solorzano2014}), for any $u,v\in O(n)$ satisfying $d_{\infty,x}(u,v)=0$, there exists $a\in H_{0,x}$ such that $u=av$. Then the infinitesimal holonomy group $H_{\infty,x}$ in $Y$ is defined to be the subgroup closure of $H_{0,x}$ in $O(n)$. Note that different identifications of $\operatorname{Hol}_{p_i}$ as a subgroup of $O(n)$ may give rise to a different $H_{\infty,x}$, but they are all the same up to conjugacy in $O(n)$. For details see Section \ref{sct-2.2} and Section \ref{sct-3}. 

Let $ (X,d) $ be a non-collapsed limit space of  $ n $-manifolds $ (M_i,g_i) $ with $|\operatorname{Ric}_{g_i}|\le n-1$. Then by Anderson \cite{Anderson1990} the regular set of $ X $ is a connected $ C^{1,\alpha} $-Riemannian $ n $-manifold $ (R_X,g_\infty) $ open and dense in $ X $. Moreover, by Colding-Naber \cite[Theorem 1.20]{Colding-Naber2012}, the intrinsic length metric of $(R_X,g_\infty)$ coincides with its restricted metric $d$ from $X$, which implies the completion of $(R_X,g_\infty)$ is $(X,d)$. Hence, it is natural to consider the tangent bundle (resp. orthonormal frame bundle) of $ X $, which can be defined as the completion of the tangent bundle $ TR_X $ (resp. orthonormal frame bundle $ FR_X $) over its regular set $ R_X $. However, the canonical lifting metric on $ TR_X $ (resp. $ FR_X $) is only continuous (not differentiable). By the same idea of the new lifting metric in Theorem \ref{thm-main}, the limit $ g_{\infty,\epsilon} $ on $ R_X $ of smoothing metrics $ g_{i,\epsilon} $ on $ M_i $ in Remark \ref{rmk-main-thm} \eqref{rmk-new-metric} can be used to define a $ C^{1} $-smooth lifting metric $ \tilde{g}_\infty $ on $ FR_X $. We define the orthonormal frame bundle $(FX,\tilde d)$ of $(X,d)$ to be the completion of $ (FR_X,\tilde{g}_\infty) $, which is generally different than the limit $(Y,d_Y)$ of orthonormal frame bundles of manifolds (for explanation see the paragraph below Theorem \ref{thm-FX}).

By applying the proof of Theorem \ref{thm-main} to $(R_X,g_\infty)$, we further extend Theorem \ref{thm-main} to orthonormal frame bundles of non-collapsed Ricci limit spaces.

\begin{theorem}\label{thm-FX}
	Let $ (X,d) $ be a non-collapsed limit space of $n$-manifolds with $|\operatorname{Ric}|\le n-1$ and $\operatorname{Vol}\ge v>0 $.
	Then the orthonormal frame bundle $(FX,\tilde d)$ of $(X,d)$ with its projection $\pi:(FX,\tilde d, O(n))\to (X,d)$ satisfies Theorem \ref{thm-main} (1)-(3).
	
	Moreover, for a sequence of non-collapsed limit spaces $(X_i,d_{i})$ of $n$-manifolds with bounded Ricci curvature $|\operatorname{Ric}|\le n-1$, and  $\operatorname{Vol}\ge v>0$, the convergence of their orthonormal frame bundles $(FX_i,\tilde d_i, O(n))$
	$$\begin{CD}
		(FX_{i}, \tilde{d}_i, O(n))@>\text{eqGH}>> (Y, d_Y, O(n))\\
		@VV \pi_{i} V @VV \pi_\infty V\\
		(X_{i},d_{i})@>\text{GH}>> (X_\infty,d_\infty),
	\end{CD}
	$$
	satisfies (1)-(3) in Theorem \ref{thm-main}.
	
\end{theorem}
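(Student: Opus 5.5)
The plan is to reduce Theorem \ref{thm-FX} to Theorem \ref{thm-main}. The key is to realize $(FX,\tilde d)$ itself as a limit of frame bundles of smooth manifolds, or at least to show that every local estimate used in the proof of Theorem \ref{thm-main} holds on $(FR_X,\tilde g_\infty)$.

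\textbf{Step 1: identify $FX$ with an equivariant limit.} Let $X$ be the limit of $(M_i,g_i)$. By Anderson's $C^{1,\alpha}$ convergence on $R_X$, for any compact $K\subset R_X$ the metrics $g_i$ converge in $C^{1,\alpha}$ to $g_\infty$ on $K$. The smoothed metrics $g_{i,\epsilon}$ of Proposition \ref{prop-nearby-metric} have all covariant derivatives of curvature controlled by the harmonic radius. They therefore subconverge in $C^{k}$ on $K$ to $g_{\infty,\epsilon}$, and their Levi-Civita connections converge as well. Hence the lifted metrics $\tilde g_i$ converge in $C^{0,\alpha}$ on $\pi_i^{-1}(K)$ to $\tilde g_\infty$ on $FK$, and are $C^1$ on the frame bundle level because the connection is smooth.

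Thus $(FR_X,\tilde g_\infty)$ embeds isometrically, as a length space restricted to compacts, into the limit $Y$ of Theorem \ref{thm-main}, as the open subset $\pi_\infty^{-1}(R_X)$. Taking completions gives a $1$-Lipschitz, $O(n)$-equivariant map $FX\to Y$ that is the identity on $FR_X$.

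\textbf{Step 2: treat $FX$ directly instead of using the map.} The map in Step 1 need not be an isometry, since intrinsic distances in $FR_X$ avoid $\pi^{-1}(S_X)$. So instead I rerun the proof of Theorem \ref{thm-main} on $FX$ directly, as the introduction suggests.

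For (1), note that $\pi:FR_X\to R_X$ is a Riemannian submersion with $O(n)$ acting isometrically. Horizontal lifts of curves in $R_X$ then show $FR_X/O(n)=R_X$ with the quotient length metric. By Colding--Naber, the intrinsic metric of $R_X$ equals $d$. Completion commutes with taking the quotient by the compact group, so $FX/O(n)\cong X$.

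The fiber description comes from Lemma \ref{lem-O(n)-limit}, applied to fibers over $p_j\in R_X$ converging to $x\in S_X$. The fiber metrics are bounded by the bi-invariant metric $b$, which gives the $1$-Lipschitz equivariant homeomorphism $(O(n),b)/O(n)_y\to\pi^{-1}(x)$. Here the isotropy group is the infinitesimal holonomy group, built from holonomy of $\nabla^{\infty,\epsilon}$ along short loops in $R_X$.

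For (2), at a point of $R_X$ the connection is $C^1$, so holonomy around loops of length $\to0$ tends to the identity, giving trivial isotropy. If $O(n)_y$ is infinite, the orbit has dimension less than $\dim O(n)$, so the volume of small balls at $y$ falls below Euclidean and $y$ is singular.

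For (3), $\pi^{-1}(R_X)$ is a $C^{1,\alpha}$ manifold by the metric regularity above, so $S_{FX}\subset\pi^{-1}(S_X)$. The codimension estimate follows from Cheeger--Naber's Minkowski bound $\mathrm{Vol}(B_r(S_X))\le Cr^{4-\eta}$. Since $\pi$ is $1$-Lipschitz with fibers of bounded diameter and volume, covering $B_r(S_X)$ by $r$-balls and lifting each to at most $Cr^{-\dim O(n)}$ balls of radius $r$ in $FX$ gives Hausdorff codimension $\ge4$.

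\textbf{Step 3: the sequence statement.} For $X_i\to X_\infty$, each $X_i$ is a limit of manifolds, so a diagonal argument yields manifolds $M_i$ with $M_i\to X_\infty$. The local estimates above are uniform in $v$, $n$ and the harmonic radius, so the arguments of Step 2 pass to the limit $Y$ of $FX_i$.

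The main obstacle is item (1)'s fiber homeomorphism at singular points. It requires controlling holonomy of $\nabla^{\epsilon}$ along loops in $R_X$ near $S_X$ that are nearly minimal in the completed metric. My first attempt would be to approximate such loops by loops in the manifolds $M_i$, using the uniform harmonic-radius-dependent bounds on $|\nabla^\epsilon-\nabla|$. I would then invoke the Lemma \ref{lem-O(n)-limit} argument there, and check that the limiting isotropy group is unchanged.
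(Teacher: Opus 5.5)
Your strategy is the same as the paper's: view $(R_X,g_\infty)$ as a constant sequence converging to its completion $(X,d)$, rerun the proof of Theorem \ref{thm-main}, and do the same on the $FX_i$. Your treatment of $FX/O(n)\cong X$, the fiber description via Lemma \ref{lem-O(n)-limit}, and the Minkowski-content covering for the codimension of $\pi^{-1}(S_X)$ are fine.

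\textbf{The gap: the second half of (2).} You argue that if $K=O(n)_y$ is infinite, then small balls at $y$ have less than Euclidean volume, so $y$ is singular. Neither step holds.
\begin{enumerate}
\item[(a)] $FX$ has no Ricci lower bound near $\pi^{-1}(S_X)$. So a Euclidean tangent cone at $y$ says nothing about $\mathcal H^k(B_r(y))$.
\item[(b)] The estimate goes the other way. Let $z$ lie over a regular point with $d(z,y)=s<r$, let $g\in O(n)$, and let $h\in K$ be closest to $g$. Since $yh=y$,
\[
d(zg,y)\le d(zg,zh)+d(zh,yh)\le d_b(g,K)+s .
\]
So $B_r(y)$ contains a tube of radius $r-s$ around the orbit $zK$ in every nearby regular fiber. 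Integrating over $B_{r/2}(x)$ gives $\mathcal H^k(B_r(y))\ge c\,r^{k-\dim K}$. The volume ratio blows up; this is the non-doubling seen in Example \ref{ex-Eguchi-Hanson}.
\item[(c)] A drop in orbit dimension never forces a singularity for a general isometric action; rotations of $\mathbb R^2$ about $0$ are a counterexample.
\end{enumerate}
The argument must use the specific structure, as Claim \ref{clm-singular-cone} does. In a Euclidean tangent cone $\mathbb R^k$ at $y$, the nearby regular fibers are totally geodesic. Hence the blown-up $O(n)$-action contains the translations $\mathbb R^{n(n-1)/2}$. Since $\mathbb R^k/G=T_xX$ has dimension $n$, the limit of $O(n)_y$, which has a fixed point, must be finite.

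\textbf{Missing input for the $C^{1,\alpha}$ claim in (3).} You say $\pi^{-1}(R_X)$ is $C^{1,\alpha}$ ``by the metric regularity above''. But Step 1 only gives $C^{0,\alpha}$ convergence of the lifted metrics. Continuity of $\tilde g_\infty$ does give $S_{FX}\subset\pi^{-1}(S_X)$, but not a $C^{1,\alpha}$ structure. For that the paper uses its main technical result, which you never invoke. Theorem \ref{thm-FM-Ric-bounded} bounds the Ricci curvature of the frame bundle where the harmonic radius is at least $r_0$. Anderson's $C^{1,\alpha}$ compactness then gives Proposition \ref{prop-regular-pt}.

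\textbf{The ``main obstacle'' is not one, and your fix aims at a false statement.} The distance in $FX$ between points of $FR_X$ is the infimum of lengths of curves in $FR_X$. So Lemma \ref{lem-FM-dist} holds verbatim, and Lemma \ref{lem-O(n)-limit} applies directly to the constant sequence, as you already did in Step 2. Approximating loops in $R_X$ by loops in $M_i$ would instead produce $H_{\infty,x}$. That group can be strictly larger than the isotropy group $H_x$ of $FX$ (Theorem \ref{thm-FX-cncd-Y}). In Example \ref{ex-Eguchi-Hanson} the two groups are $\{\pm I\}$ and $SU(2)$, so the group is not ``unchanged''. For the same reason, the diagonal manifolds in Step 3 are beside the point. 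Their frame bundles may converge to a space different from $\lim FX_i$, so the argument must be run on the $FX_i$ themselves.
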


It should be pointed out that, in general the orthonormal frame bundle $(FX, \tilde d_X)$ (resp. $(FX_\infty,\tilde d_\infty)$) over a limit space $(X,d_X)$ in Theorem \ref{thm-main} (resp. $(X_\infty, d_\infty)$ in Theorem \ref{thm-FX}) differs from the limit $ (Y,d_Y) $ of orthornormal frame bundles $ (FM_i,\tilde{g}_i)$ in Theorem \ref{thm-main} (resp. $(FX_i,\tilde d_i)$ in Theorem \ref{thm-FX}). For example, let us consider the convergence of the rescaled Eguchi-Hanson space $ (M,g_i)=(TS^2,p,i^{-2}g)$, $i\to \infty$, to its asymptotic cone $(X,o,d)=(C(\mathbb RP^3),o)$, where $ \operatorname{Ric}_g =0 $ and $o$ is the vertex. The orthonormal frame bundle $(FX,\tilde d)$ is isometric to $(\mathbb R^4\times O(4))/\{(u,A)\sim (-u,-A)\}$, where the fiber of $o\in C(\mathbb RP^3))$ is $O(4)/\{\pm I\}$. At the same time, the length of underlying loop of any fixed holonomy on Eguchi-Hanson space $(TS^2,g)$ goes to zero under $g_i=i^{-2}g$ as $i\to \infty$. Since the whole holonomy group of $(TS^2,g)$ is $SU(2)$, the limit of orthonormal frame bundles $(FM,\tilde g_i)$ has a fiber $O(4)/SU(2)$ over the vertex $o$ of $C(\mathbb RP^3)$,  which is much smaller than $O(4)/\{I,-I\}$; for details see Example \ref{ex-Eguchi-Hanson}. By the proof of Theorem \ref{thm-main}, the preimages of regular point set $R_X$ in $(FX,\tilde d_X)$ and $(Y, d_Y)$ naturally coincide with each other as a $C^{1,\alpha}$-Riemannian manifold. 

In order to measure the difference between $(FX,\tilde d_X)$ and $(Y,d_Y)$, we define the infinitesimal holonomy groups on the Ricci limit space $(X,d_X)$.
Though the holonomy group can not be defined by parallel transport along piecewise smooth loops at a singular point $ x \in X $ (where the tangent cone is not isometric to $(\mathbb{R}^n,g_E)$), however, similar to $ H_{\infty,x} $ for $(Y,d_Y)$, the infinitesimal holonomy group at $ x\in X $ can be determined by the holonomy of $ (R_X,g_{\infty,\epsilon}) $.  
Let us consider the tangent bundle $ (TX,d_{TX}) $ of $ X $, i.e., the completion of the tangent bundle of $ (TR_X,\bar{g}_\infty)$ with the Sasaki metric $ \bar{g}_\infty $ defined by lifting $ g_{\infty} $ along horizontal distribution by $ g_{\infty,\epsilon} $. Let $ \operatorname{pr}_\infty: (TX,d_{TX}) \to (X,d_X) $ be the natural projection.
By \cite[Thereom 3.6, Theorem 3.8]{Solorzano2014} (or see Section 2.2), the fiber $ \operatorname{pr}_\infty^{-1}(x) $ at a singular point $ x\in S_X $ is isometric to a psuedo metric space $ (\mathbb{R}^n,d_\infty) $, such that the semi-metric $ d_\infty $ determines a closed subgroup $ H_x $ of $ O(n) $,
$$ H_x=\left\{ a\in O(n)| \forall v\in \mathbb{R}^n, d_\infty(v,av)=0 \right\} ,$$
and the metric space $ (\mathbb{R}^n,d_\infty)/\sim$, $x\sim y\Leftrightarrow d_\infty(x,y)=0 $, is homeomorphic to the quotient space $ \mathbb{R}^n/H_x $.
We define the \emph{infinitesimal holonomy group} at $ x $ to be $ H_x $, where $ (\mathbb{R}^n,d_\infty,H_x) $ is unique determined by a conjugation. 

The relationship between $ H_x $ and the holonomy  around nearby regular points is as follows. For a sequence of points $ x_i \in R_X$ converging to $ x\in S_X $, the $ H_x $ contains all the limit of holonomy elements whose underlying loops $ \gamma_i $ at $ x_i $ have length tending to zero as $ i\to \infty $.  By definition, the infinitesimal holonomy group at a regular point of $ X $ is trivial.

The following theorem provides a sufficient and necessary condition for the consistency between the limit spaces $(FX, \tilde d_X) $ and $ (Y,d_Y) $.
\begin{theorem}\label{thm-FX-cncd-Y}
	Under the assumption of Theorem \ref{thm-main} (resp. Theorem \ref{thm-FX}), for any point $x$ in the non-collapsed limit space $(X,d_X)$ (resp. $(X_\infty,d_\infty)$), up to conjugacy in $O(n)$, the infinitesimal holonomy group $H_x$ on $(X,d_X)$ (resp. $(X_\infty,d_\infty)$) is a subgroup of the infinitesimal holonomy group $H_{\infty,x}$ on the limit $(Y,d_Y)$ of orthonormal frame bundles $(FM_i,\tilde g_i)$ (resp. $(FX_i,\tilde d_i)$).
	
	Furthermore, the orthonormal frame bundle $(FX, \tilde d_X, O(n))$ of the limit space $(X,d_X)$ is equivariantly isometric to $ (Y,d_Y,O(n)) $ if and only if for any point $ x $ in $ X $, $ H_x $ is conjugate to $ H_{\infty,x} $.
	
\end{theorem}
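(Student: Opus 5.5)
We compare the two spaces through their common open dense part. By the proof of Theorem \ref{thm-main} (resp. Theorem \ref{thm-FX}), the preimages of $R_X$ in $(FX,\tilde d_X)$ and in $(Y,d_Y)$ are both identified with $(FR_X,\tilde g_\infty)$ as $C^{1,\alpha}$-Riemannian manifolds. This identification is $O(n)$-equivariantly and locally isometric. By construction $FX$ is the metric completion of $(FR_X,\tilde g_\infty)$ with its intrinsic length metric. Since $\pi_\infty^{-1}(S_X)$ has codimension $\ge 4$ in $Y$ by Theorem \ref{thm-main}(3), we expect it not to disconnect, so $Y$ should be the completion of $\pi_\infty^{-1}(R_X)$ with respect to the restriction $d_Y$. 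Consequently the identity on $FR_X$ extends to a $1$-Lipschitz, $O(n)$-equivariant surjection $\Phi:FX\to Y$. Indeed, restricted distances in $Y$ are bounded by lengths of curves in $FR_X$, and a Cauchy sequence for $\tilde d_X$ is Cauchy for $d_Y$. Moreover $\Phi$ commutes with the projections to $X$, both quotients being $X$ (Theorem \ref{thm-main}(1) and its analogue for $FX$).

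For the first statement, fix $x\in X$ and regular points $x_j\to x$, and restrict $\Phi$ to the fibers. By Lemma \ref{lem-O(n)-limit} and the description of fibers in Theorem \ref{thm-main}(1), the fiber of $FX$ over $x$ is $O(n)/H_x$. Here $H_x$ is generated by limits of holonomies of $\nabla^{\infty,\epsilon}$ along loops at $x_j$ of length $\to 0$ in $R_X$. Similarly, the fiber of $Y$ over $x$ is $O(n)/H_{\infty,x}$. To show $H_x\subset H_{\infty,x}$ up to conjugacy, take $a=\lim a_j$ with $a_j\in\operatorname{Hol}_{x_j}$ realized by short loops $\gamma_j\subset R_X$. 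By $C^{1,\alpha}$ convergence of $g_i$ near $\gamma_j$ and smooth convergence of $g_{i,\epsilon}$ on compact subsets of $R_X$ (Proposition \ref{prop-nearby-metric}), each $\gamma_j$ lifts to loops $\gamma_{i,j}$ in $M_i$ whose $\nabla^{i,\epsilon}$-holonomies converge to $a_j$ as $i\to\infty$. A diagonal sequence then exhibits $a$ as an element of $H_{0,x}\subset H_{\infty,x}$, after fixing compatible frame identifications, which accounts for the conjugacy. Equivalently: if $d_{FX}(u,av)=0$, then $d_Y(\Phi u,\Phi(av))=0$ since $\Phi$ is $1$-Lipschitz. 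So the map of fibers $O(n)/H_x\to O(n)/H_{\infty,x}$ is a well-defined equivariant quotient map, which forces $H_x\subset H_{\infty,x}$ up to conjugacy.

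For the equivalence, the "only if" direction is immediate. An equivariant isometry preserves isotropy groups, so by the fiber descriptions $H_x$ and $H_{\infty,x}$ are conjugate as isotropy groups. For "if", assume $H_x\sim H_{\infty,x}$ for all $x$. By the first part the fiber map is $O(n)/H\to O(n)/gHg^{-1}$, induced by an inclusion between conjugate closed subgroups of a compact Lie group. Such an inclusion is an equality, since the groups have equal dimension and equal number of components. Hence $\Phi$ is a bijection, and it remains to show that it does not decrease distances.

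The main obstacle is proving $\tilde d_X(u,v)\le d_Y(\Phi u,\Phi v)$. I would first treat $u,v\in FR_X$. A $d_Y$-geodesic may pass through $\pi_\infty^{-1}(S_X)$, and I would use the codimension $\ge 4$ estimate together with a segment inequality / Cheeger–Colding type argument in $Y$ to perturb it. On the regular part $Y$ has locally bounded Ricci curvature (Theorem \ref{thm-main}(4)), which should let us perturb almost-geodesics to avoid the singular preimage. This would show that $d_Y$ restricted to $FR_X$ is the length metric of $\tilde g_\infty$, in analogy with Colding–Naber for $X$. Once distances agree on the dense set $FR_X$, the bijection $\Phi$ is an isometry by density and continuity. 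The hypothesis $H_x\sim H_{\infty,x}$ enters exactly here: it guarantees that $\Phi^{-1}$ is well defined, so points of $Y$ over $S_X$ are not collapsed.
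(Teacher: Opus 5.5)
Your first steps are sound. By density of $\pi_\infty^{-1}(R_X)$ in $Y$, the identity on $FR_X$ extends to a $1$-Lipschitz $O(n)$-equivariant surjection $\Phi\colon FX\to Y$ over $X$; density already follows from $\pi_\infty$ being a submetry, so the codimension remark is unnecessary. Reading off $O(n)_u\subset O(n)_{\Phi(u)}$ then gives $H_x\subset H_{\infty,x}$ up to conjugacy, a bit more cleanly than the paper's loop-lifting sentence. The ``only if'' direction and the bijectivity of $\Phi$ under the hypothesis are also correct, and the paper's fiber map $\varphi$ is essentially your $\Phi$ restricted to a fiber.

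The gap is the remaining inequality $\tilde d_X(u,v)\le d_Y(\Phi u,\Phi v)$. You propose to show, by a segment-inequality and codimension-$4$ perturbation argument that never uses the hypothesis, that $d_Y$ restricted to $FR_X$ is the length metric of $\tilde g_\infty$. That statement is false in general. Example~\ref{ex-Eguchi-Hanson}(3) records that the completion of $(R_Y,\tilde g_\infty)$ with its length metric is $FC(\mathbb RP^3)$, not $Y$. Concretely, take frames $u,hu$ over a point at distance $r$ from the vertex, with $h\in SU(2)\setminus\{\pm I\}$. A loop running into the bubble region and back gives $d_Y(u,hu)\le 2r$. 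On the other hand, the holonomy of $R_X$ is only $\{\pm I\}$, so $\tilde d_X(u,hu)\ge\min\{d_b(h,I),d_b(h,-I)\}$ independently of $r$.

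There is also no tool to run such an argument. Theorem~\ref{thm-main}(4) and Theorem~\ref{thm-FM-Ric-bounded} give Ricci bounds only at definite distance from $\pi_\infty^{-1}(S_X)$, since the O'Neill terms involve $|R^\epsilon|^2$ and $\nabla R^\epsilon$, which are uncontrolled as the harmonic radius tends to $0$. So $(FM_i,\tilde g_i)$ has no uniform lower Ricci bound, $Y$ supports no segment inequality, and $Y$ is not even a Ricci limit space.

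Your plan also proves too much. If $d_Y=\tilde d_X$ held on the dense set $FR_X$, then $\Phi$ would be an isometric embedding of the completion with no assumption on $H_x$ at all. So the hypothesis cannot enter ``exactly'' through well-definedness of $\Phi^{-1}$: a map that preserves distances on a dense set collapses nothing.

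What is missing is an argument in which $H_x\sim H_{\infty,x}$ controls \emph{distances}, not just injectivity. By Lemma~\ref{lem-FM-dist}, the defect $d_Y<\tilde d_X$ comes from $\nabla^{i,\epsilon}$-holonomy realized in $M_i$ by loops through the region degenerating to $S_X$, which is not realized by loops of comparable length in $R_X$. The paper therefore works fiberwise. It takes the $1$-Lipschitz limit $\varphi\colon O(n)/H_x\to O(n)/H_{\infty,x}$ of the almost $1$-Lipschitz identities between the restricted metrics on $\pi_\infty^{-1}(z_i)$ and $\pi_i^{-1}(x_i)$. It then argues through Lemma~\ref{lem-FM-dist} that a strict inequality $\tilde d(u,v)>d_Y(u,v)$ would force holonomy of $(M_i,g_i)$ not passed to $(R_X,g_\infty)$, hence $H_x\subsetneq H_{\infty,x}$. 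Your last step needs a comparison of this kind, between holonomy together with its minimal loop length in $M_i$ and in $R_X$, in place of the curvature argument.
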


A few remarks on the main results are given in below.
\begin{remark}
	~
	
	\begin{enumerate}
		\item If in addition, the base manifolds $ (M_i,g_i) $ are Einstein, then Theorem \ref{thm-main}, Theorem \ref{thm-FX} and Theorem \ref{thm-FX-cncd-Y} hold for $ (FM_i,\tilde{g}_{i,can}) $ with the canonical lifting metrics $ \tilde{g}_{i,can} $ and their Gromov-Hausdorff limit space.
		\item As a benefit of  Sol\'{o}rzano \cite{Solorzano2014} which does not assume any curvature constraint, Theorem \ref{thm-main}, Theorem \ref{thm-FX} and Theorem \ref{thm-FX-cncd-Y} also hold for canonical lifting metric $ \tilde{g}_{i,can} $ on orthonormal frame bundle $ (FM_i,\tilde{g}_{i,can}) $ and its Gromov-Hausdorff limit space $(Y,d_Y)$, except the $ C^{1, \alpha} $-regularity of $\pi_\infty^{-1}(R_X)$ on $Y$ and Theorem \ref{thm-main} (4). Let us give a brief explanation on the codimension of $S_Y$.
		
		Indeed, by an extension of Sol\'{o}rzano \cite{Solorzano2014} (see Lemma \ref{lem-FM-fiber-limit} below), every limit fiber $\pi_\infty^{-1}(x)$ on $Y$ is homeomorphic to $O(n)/H_{\infty,x}$ and has Hausdorff measure less than $O(n)/H_{\infty,x}$. Because $(M_i,g_i)$ $C^{1,\alpha}$-converges to $(R_X,g_\infty)$ locally, the preimage $\pi_\infty^{-1}(R_X)$ coincides with the orthonormal frame bundle $(FR_X,\tilde g_{\infty,can})$ of $(R_X,g_\infty)$, which is a manifold with continous metric. At the same time, by Cheeger-Naber's codimension $4$ Theorem \cite{Cheeger-Naber2015}, the singular set $S_X$ of $(X,d)$ has codimension $\ge 4$. Hence $\pi_\infty^{-1}(S_X)$ is also of codimension $\ge 4$, which contains $ S_Y$ as a subset.
		
		\item It happens that a regular point $y$ in $FX$ projects to a singular point in $X$. For example, let $(X,d)=C(\mathbb{R}P^3)$ be the asymptotic cone of Eguchi-Hanson space. Its orthonormal frame bundle $(FX,\tilde d)$ is isometric to $(\mathbb R^4\times O(4))/\{(I,I),(-I,-I)\}$, which is a flat manifold. It illustrates that the orthonormal frame bundle $FX$ may resolve certain singularity of $X$.
		\item  If the $n$-manifolds $(M_i,g_i)$ admit only lower Ricci curvature bound in Theorem \ref{thm-main} and Theorem \ref{thm-FX}, then all the conclusions fail for the limit space $(Y,d_Y)$ of orthonormal frame bundles $(FM_i,\tilde g_i)$ and the orthonormal frame bundle $(FX,\tilde d_X)$ of the Ricci limit space $(X,d_X)$ (when $(FX,\tilde d_X)$ can be well-defined). For example, a singular fiber in $Y$ may project to a regular point in $X$; see Example \ref{ex-lower-bound} below. And if the singular points are dense in $X$, then any fiber over $X$ in $Y$ may be two points; see Example \ref{ex-fiber-pt}.
	\end{enumerate}
\end{remark} 

The main idea in proving Theorem \ref{thm-main} is as follows.

Let $FM_i^{g_{i,\epsilon}}$ and $FM_i^{g_{i}}$ be the orthonormal frame bundles whose horizontal distributions are induced by $ g_{i,\epsilon} $ and $ g_{i} $ respectively, where the metric $ g_{i,\epsilon} $ is a nearby metric of $ g_i $ smoothed by the technique in Petersen-Wei-Ye \cite{Petersen-Wei-Ye1999}, Cheeger-Tian \cite{Cheeger-Tian2006}.
Since there is an isomorphism $ \alpha_{g_{i,\epsilon},g_i}^{1/2}: FM_i^{g_{i}} \to FM_i^{g_{i,\epsilon}} $ between $FM_i^{g_{i,\epsilon}}$ and $FM_i^{g_{i}}$, the horizontal distribution of $ FM_i^{g_{i,\epsilon}} $ can be pulled back by $ \alpha_{g_{i,\epsilon},g_i}^{1/2}$ to $FM_i^{g_{i}}$. For simplicity, let $FM_i^{g_{i}}$ with the pull back horizontal distribution be still denoted by $ FM_i^{g_{i,\epsilon}} $. We introduce a new lifting metric $ \tilde{g}_i $ on orthonormal frame bundle $FM_i^{g_{i,\epsilon}}$ by lifting $ g_i $ along horizontal distribution by $ g_{i,\epsilon} $. By definition of $ \tilde{g}_i $, the projection $ \pi_i $ is a Riemannian submersion, and satisfies the following diagram
$$\begin{CD}
	(FM_i^{g_{i,\epsilon}}, \tilde{g}_i, O(n)) @>eqGH>> (Y, d_Y, O(n))\\
	@VV \pi_i V @VV \pi_\infty V\\
	(M_i, g_i) @>GH>> (X= Y/O(n),d_X).
\end{CD}$$

By O'Neill's formula \cite{O'Neill1966}, we verify that the Ricci curvature of $ (FM_i^{g_{i,\epsilon}}, \tilde{g}_i) $ at $ y_i \in FM_i^{g_{i,\epsilon}} $ admits a bound depending on the $ C^{1,\alpha} $-harmonic radius of the projection point $ \pi_i(y_i) \in (M_i,g_i) $. Hence, all Cauchy sequences $\{p_i\in (M_i,g_i)\}$ are divided into two classes. One is the $ C^{1,\alpha} $-harmonic radius $r_{h}(p_i)$ of $ p_i \in (M_i,g_i) $ has a uniform positive lower bound $ r_0 $. Another is $ r_{h}(p_i)\to 0$ as $i\to \infty$. 

First, let us consider the limit of $(FM_i^{g_{i,\epsilon}},\tilde g_i)$ over the first class. Let $M_{i,r_0}$ be the subset of $(M_i,g_i)$ consisting of all points with $ C^{1,\alpha} $-harmonic radius $\ge r_0$. Then the orthonormal frame bundle $ (FM_{i,r_0}^{g_{i,\epsilon}}, \tilde{g}_i) $ has bounded Ricci curvature depending on $r_0$. 
By the definition of $\tilde g_i$ and the uniform regularity of $g_{i,\epsilon}$ on $M_{i,r_0}$, it can be seen that $(FM_{i,r_0}^{g_{i,\epsilon}},\tilde g_i)$ $C^{1,\alpha}$-converges to the orthonormal frame bundle $(FX_{r_0}^{g_{\infty,\epsilon}},\tilde g_\infty)$ over $X_{r_0}=\lim_{i\to \infty}M_{i,r_0}$.
Since $ r_0 $ is arbitrary, the preimage $ \pi_\infty^{-1}(R_X) $ of the regular set $ R_X $ is a $ C^{1,\alpha} $-Riemannian manifold. It follows that the singular set $S_Y$ is contained in the preimage of singular set $S_X$, and the isotropy group at any point in $\pi_\infty^{-1}(R_X)$ is trivial.

Next, let us consider the preimage $\pi_i^{-1}(p_{i})$ of a Cauchy sequence of second class $r_h(p_i)\to 0$. By Colding \cite{Colding1997} (see also Cheeger-Colding \cite[Section 7]{Cheeger-Colding1997}), $p_i$ converges to a singular point $p\in S_X$.
Based on the description of the limit of tangent bundle in Sol\'{o}rzano \cite{Solorzano2014}, the fiber $ \pi_\infty^{-1}(x) $ of a singular point $ x\in S_X $ can be explicitly described by the infinitesimal holonomy group $H_{\infty,x}$, which is conjugate to the isotropy group of $O(n)$ along the orbit fiber $ \pi_\infty^{-1}(x) $.
Additionally, we prove that for any $ x\in S_X $, the Hausdorff dimension of the fiber $ \pi_\infty^{-1}(x) $ is no more than that of $ O(n) $.
It follows that the singular set $ S_Y\subset \pi_\infty^{-1}(S_X) $ is of codimension $ \ge 4 $.

The remaining of the paper is organized as follows.
In Section \ref{sct-2}, we present preliminaries that are necessarily required throughout the paper.
In Section \ref{sct-3}, we define the new lifting metrics on orthonormal frame bundles and provide their basic properties.
In Section \ref{sct-4}, we prove that a bound of the Ricci curvature on orthonormal frame bundle under the new lifting metric depends on the $ C^{1,\alpha} $-harmonic radius of the manifold under original metric. 
In Section \ref{sct-5}, we prove Theorem \ref{thm-main}, Theorem \ref{thm-FX} and Theorem \ref{thm-FX-cncd-Y}. Examples illustrating the main results are given in Section \ref{sct-examples}.

Acknowledgment. 
The authors are deeply grateful to Professor Xiaochun Rong for his encouragement and help during the study of the paper. S. X. is supported in part by National Natural Science Foundation of China Grant 12271372.
\section{Preliminaries}\label{sct-2}

\subsection{Orthonormal Frame Bundle.}\label{sbsct-orth-frm-bundle}
Let us recall the definition of orthonormal frame bundle and give some properties required in the paper.

Let $ M $ be a smooth $ n $-manifold and $ g $ be a Riemannian metric tensor on $ M $. The orthonormal frame bundle $ FM^g $ corresponding to $ (M,g) $ is defined by
\begin{eqnarray*}
	FM^g=\{ (p,e) | g_{p}(e_{\lambda},e_{\mu})=\delta_{\lambda\mu}, \lambda,\mu= 1,2, \cdots, n \}
\end{eqnarray*}
where $ e=(e_{1},e_{2},\cdots,e_{n}) $ is a frame at $ p\in M $ and $ \delta_{\lambda\mu} $ is Kronecker symbol. The orthonormal frame bundle $ FM^g $ is a principal $ O(n) $-bundle where $ O(n) $ acts freely on $ FM^g $ on the right.

Although the orthonormal frame bundle $ FM^g $ depends on the metric tensor $ g $, there exits a canonical isomorphism between $ FM^{g'} $ and $ FM^{g} $ determined by different metrics $ g, g' $, which can be constructed as follows.

Let $ \alpha_{g,g'}: TM\to TM $ be the unique pointwise linear map determined by 
$$ g'(v,w)=g(\alpha_{g,g'}(v),w), \forall p\in M, \forall v,w \in T_pM.  $$
Since the operator $ \alpha_{g,g'} $ is self-adjoint whose eigenvalues are all positive, the map $  \alpha_{g,g'}^{1/2}: TM\to TM $ exists and satisfies $ g'(e_\lambda,e_\mu)=g(\alpha_{g,g'}^{1/2}(e_\lambda),\alpha_{g,g'}^{1/2}(e_\mu)) $. Since $ \alpha_{g,g'}^{1/2} $ commutes with the right action of $ O(n) $, it can be viewed as an $ O(n) $-equivariant isomorphism between principal fiber bundles 
\begin{eqnarray*}
	\alpha_{g,g'}^{1/2} :   \qquad FM^{g'}\qquad &\to&  \qquad FM^{g}\qquad,\\
	(p,e_1,\cdots,e_n) &\mapsto& (p,\alpha_{g,g'}^{1/2}(e_1),\cdots,\alpha_{g,g'}^{1/2}(e_n)).
\end{eqnarray*}

The following elementary lemma is necessary for the convergence of orthonormal frame bundles associated with a sequence of metric $ g_i $ on a fixed manifold $ M $.
\begin{lemma}\label{lem-FM-convergence}
	~
	
	\begin{enumerate}
		\item Let $ T(FM^g)=\mathcal{H}\oplus \mathcal{V} $ (resp. $ T(FM^{g'})=\mathcal{H}'\oplus \mathcal{V}' $) be an $ O(n) $-invariant connection on $ FM^{g} $ (resp. $ FM^{g'} $), and $ \omega:  T(FM^{g}) \to \mathfrak{o}(n)  $ (resp. $ \omega':T(FM^{g'}) \to \mathfrak{o}(n) $) be its connection form, which is defined by $ \omega(X)=\omega(X^V)= A $, where $ A^*_{(p,e)}=X^V_{(p,e)} $, and $ A^*\in \Gamma(FM^{g'}) $ is the fundamental vector field generated by $ A\in \mathfrak{o}(n) $ via the right action $ R_{\exp t A} $.
		Then, the pull back $ \left( \alpha_{g,g'}^{1/2} \right)^*(\omega) $ is still an $ O(n) $-invariant connection form on $ FM^{g'} $ such that $ \left( \alpha_{g,g'}^{1/2} \right)^*(\omega)|_\mathcal{V'}=\omega'|_\mathcal{V'} $.
		\item Let $ g_i $ be a sequence of Riemannian metrics on $ M $ converging to $ g $ in the $ C^{1} $ topology. Let $ g' $ be another metric on $ M $. Let $ \omega_i $ (resp. $ \omega$) denote the connection form of the Levi-Civita connection of $ g_i $ (resp. $ g $)  on $ FM^{g_i} $ (resp. $ FM^{g} $). Then, the connection form $  \left( \alpha_{g_i,g'}^{1/2} \right)^*(\omega_i) $ on $ FM^{g'} $ converges to $ \left( \alpha_{g,g'}^{1/2} \right)^*(\omega) $.
	\end{enumerate}
\end{lemma}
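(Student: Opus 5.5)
The plan is to prove both parts from the defining properties of a connection form: it restricts on vertical vectors to the inverse of the fundamental-vector-field map, and it is $\operatorname{Ad}$-equivariant. Set $\Phi=\alpha_{g,g'}^{1/2}:FM^{g'}\to FM^{g}$ and $\tilde\omega=\Phi^*\omega$.

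For (1), the key input is that $\Phi$ is a smooth $O(n)$-equivariant bundle isomorphism covering the identity of $M$. To see equivariance, write $R_a(p,e)=(p,e a)$. Since $\Phi$ acts on each frame vector by the fixed linear map $\alpha^{1/2}_{g,g'}|_p$, it commutes with taking linear combinations of the frame vectors, so $\Phi\circ R_a=R_a\circ\Phi$. Differentiating $\Phi(R_{\exp tA}(p,e))=R_{\exp tA}(\Phi(p,e))$ at $t=0$ gives $d\Phi(A^*)=A^*$. In words, $\Phi$ maps fundamental fields to fundamental fields, and hence maps $\mathcal V'$ onto $\mathcal V$.

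With this in hand, the two connection-form axioms follow. On vertical vectors, $\tilde\omega(A^*)=\omega(d\Phi A^*)=\omega(A^*)=A=\omega'(A^*)$, which is exactly the claimed equality $\tilde\omega|_{\mathcal V'}=\omega'|_{\mathcal V'}$. For equivariance, $R_a^*\tilde\omega=\Phi^*R_a^*\omega=\Phi^*(\operatorname{Ad}_{a^{-1}}\omega)=\operatorname{Ad}_{a^{-1}}\tilde\omega$. Therefore $\tilde\omega$ is an $O(n)$-invariant connection form on $FM^{g'}$, and its horizontal distribution is $\ker\tilde\omega=d\Phi^{-1}(\mathcal H)$. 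Smoothness is immediate, because $\alpha_{g,g'}^{1/2}$ depends smoothly on the pair of metrics: the square root is real-analytic on positive definite self-adjoint operators.

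For (2), I would work in a local trivialization of $FM^{g'}$ over a chart $U$, given by a fixed $g'$-orthonormal frame field $s$, so that $FM^{g'}|_U\cong U\times O(n)$ via $(p,a)\mapsto s(p)a$. In this trivialization
$$\left(\alpha_{g_i,g'}^{1/2}\right)^*\omega_i=\operatorname{Ad}_{a^{-1}}\bigl(\Gamma_i\bigr)+a^{-1}da,$$
where $\Gamma_i$ is the $\mathfrak o(n)$-valued local connection $1$-form of $\nabla^{g_i}$ written in the $g_i$-orthonormal frame $s_i=\alpha_{g_i,g'}^{1/2}s$. Explicitly,
$$\Gamma_i(X)_{\lambda\mu}=g_i\bigl(\nabla^{g_i}_X s_{i,\mu},s_{i,\lambda}\bigr).$$
The term $a^{-1}da$ is independent of $i$, so convergence reduces to showing $\Gamma_i\to\Gamma$ locally uniformly, where $\Gamma$ is the corresponding form for $g$.

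The main step, and the only non-formal one, is this convergence of $\Gamma_i$. I would argue it as follows.
\begin{enumerate}
\item Since $g_i\to g$ in $C^1$, the operators $\alpha_{g_i,g'}$ converge to $\alpha_{g,g'}$ in $C^1$.
\item The square-root map is smooth on positive definite operators, so $\alpha_{g_i,g'}^{1/2}\to\alpha_{g,g'}^{1/2}$ in $C^1$, and hence $s_i\to s_\infty=\alpha^{1/2}_{g,g'}s$ in $C^1$.
\item The Christoffel symbols of $g_i$ involve only $g_i$, $g_i^{-1}$ and first derivatives of $g_i$, so they converge uniformly on compact sets.
\item $\nabla^{g_i}_X s_{i,\mu}$ involves $s_i$, its first derivatives and the Christoffel symbols, so it converges uniformly, and therefore so does $\Gamma_i$.
\end{enumerate}
Consequently the connection forms converge in $C^0$ on compact subsets of $FM^{g'}$, uniformly on $U\times O(n)$ because $O(n)$ is compact. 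This is the sense of convergence I would state. If the metrics converge in $C^{k+1}$, the same argument gives $C^k$ convergence of the connection forms.
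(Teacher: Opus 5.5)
Your argument is correct. Part (1) is essentially the paper's proof. Both rest on the $O(n)$-equivariance of $\alpha^{1/2}_{g,g'}$, which gives $d\Phi(A^*)=A^*$ and hence the agreement on $\mathcal{V}'$. You additionally check the $\operatorname{Ad}$-equivariance axiom explicitly, which the paper only asserts.

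For (2) your route differs. The paper first uses (1) to dispose of the vertical part. It then works in the induced coordinates on the linear frame bundle $LM$, writing the horizontal lifts $v^{h,g_i}$ via the Christoffel symbols $\Gamma^k_{i,tl}$. It shows that $\alpha^{1/2}_{g_i,g'}$ and its tangent map converge in $C^0$, so the pushed-forward horizontal lifts converge in $T(FM^{g'})\subset T(LM)$.

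You instead fix a $g'$-orthonormal frame $s$ and observe that $\alpha^{1/2}_{g_i,g'}$ carries the trivialization $(p,a)\mapsto s(p)a$ to the one given by $s_i=\alpha^{1/2}_{g_i,g'}s$. The gauge formula $\operatorname{Ad}_{a^{-1}}\Gamma_i+a^{-1}da$ then reduces everything to convergence of one $\mathfrak{o}(n)$-valued $1$-form on the base.

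The analytic input is the same in both: $C^1$ convergence of $g_i$ gives $C^0$ convergence of the Christoffel symbols and $C^1$ convergence of $\alpha^{1/2}_{g_i,g'}$. Your formulation buys two things:
\begin{itemize}
\item It treats the vertical and horizontal parts simultaneously, since the vertical agreement is just the $i$-independent Maurer--Cartan term.
\item It states the mode of convergence precisely: uniform on $K\times O(n)$ for compact $K\subset U$, and $C^k$ under $C^{k+1}$ convergence of the metrics.
\end{itemize}
The paper's version has the advantage of directly exhibiting convergence of the horizontal distributions, which is the form used in Lemma \ref{lem-FM-metric}(6).

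One point worth recording in your step 2: $\alpha_{g_i,g'}$ is self-adjoint with respect to $g'$ as well as $g_i$. So in the fixed frame $s$ it is a symmetric positive definite matrix, and that is what justifies applying the real-analytic square root on symmetric positive definite matrices.
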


\begin{proof}
	(1)
		Since $ \alpha^{1/2}_{g,g'} $ is an $ O(n) $-equivariant isomorphism between principal fiber bundles, and $ \omega $ is $ O(n) $-invariant,  the pull back $ \left( \alpha_{g,g'}^{1/2} \right)^*(\omega) $ is an $ O(n) $-invariant connection form on $ FM^{g'} $. 
		
		Next, we show that $ \left( \alpha_{g,g'}^{1/2} \right)^*(\omega)|_\mathcal{V'}=\omega'|_\mathcal{V'} $.
		Assume $ \tilde{\gamma}(t) = (p,e\cdot \exp tu) $ is a curve in $ FM^{g'} $ satisfying $ \tilde{\gamma}'(0)=V\in \mathcal{V}' $, where $ \exp : \mathfrak{o}(n) \to O(n) $ is the exponential map from Lie algebra $ \mathfrak{o}(n) $ to Lie group $ O(n) $, and $ u \in \mathfrak{o}(n) $. Since $ \alpha^{1/2}_{g,g'} $ commutes with the right action of $ O(n) $, the image of the curve $ \tilde{\gamma}(t) $ satisfies $$ \alpha^{1/2}_{g,g'} \left( e\cdot \exp tu\right) =\alpha^{1/2}_{g,g'} \left( e\right)\cdot \exp tu. $$
		Hence, $  \omega'(V)=\omega\left(\left(\alpha^{1/2}_{g,g'}\right)_* V\right). $\\
		(2) By Lemma \ref{lem-FM-convergence} (1), the pull back connection form $  \left( \alpha^{1/2}_{g_i,g'} \right)^*(\omega_i) $ on $ \mathcal{V'} $ is consist with $  \left( \alpha^{1/2}_{g,g'} \right)^*(\omega) $. Hence, it suffices to show that for any $ v\in TM $, the vector $ \left( \alpha^{1/2}_{g',g_i} \right)_*(v^{h,g_i}) $ converges to $ \left( \alpha^{1/2}_{g',g} \right)_*(v^{h,g}) $, where $ v^{h,g_i} $ (resp. $ v^{h,g} $) is the horizontal lifting vector of $ v $ with respect to $ g_i $ (resp. $ g $).
		
		We first consider the convergence on the linear frame bundle $ LM $. 
		Let $ (x_1,\cdots,x_n) $ be a local coordinate system on an open set $ U\subset M $. Let $ (x_1,\cdots,x_n, X_1^1,\dots,X_1^n,\dots,X_n^1,\dots,X_n^n ) $ be the naturally induced local coordinate system on $ LU \subset LM $.
		For any vector $ v=\sum_{t=1}^{n}\xi_t\frac{\partial}{\partial x_t} \in TM $, its horizontal lifting vector $ v^{h,g_i} $ (resp. $ v^{h,g} $) can be written in $ LM $ as $$ v^{h,g_i}=\sum_{t=1}^{n}\xi_t\frac{\partial}{\partial x_t}-\sum_{}\xi_t X_j^l\Gamma_{i,tl}^k\frac{\partial}{\partial X_j^k} ,$$
		$$  v^{h,g}=\sum_{t=1}^{n}\xi_t\frac{\partial}{\partial x_t}-\sum_{}\xi_t X_j^l\Gamma_{tl}^k\frac{\partial}{\partial X_j^k} , $$
		where $ \Gamma_{i,tl}^k $ (resp. $ \Gamma_{tl}^k $) is the Christoffel symbol of Levi-Civita connection of $ g_i $ (resp. $ g $). Since $ g_i $ converges to $ g $ in the $ C^1 $ topology, the vector $ v_i^{h} $ converges to $ v^h $ in $ LM $.
		
		At the same time, since $ g_i $ converges to $ g $ in the $ C^1 $ topology and $ \alpha_{g_i,g'} $ can be represented by $$ \alpha_{g_i,g'}=\left(\frac{\partial}{\partial x_1},\cdots,\frac{\partial}{\partial x_n}\right)\left( g_{i,\lambda\mu} \right)^{-1}\left(  g'_{ts} \right)
		\begin{pmatrix}	dx_1 \\	\vdots \\	dx_n \end{pmatrix},$$
		where $ g_{i,\lambda\mu}=g_i(\frac{\partial}{\partial x_\lambda,}\frac{\partial}{\partial x_\mu}) $ and $ g'_{ts}=g'(\frac{\partial}{\partial x_t},\frac{\partial}{\partial x_s}) $, 
		the matrix $ \left( g_{i,\lambda\mu} \right)^{-1}\left(  g'_{ts} \right) $ $ C^{1} $-converges to $ \left( g_{\lambda\mu} \right)^{-1}\left(  g'_{ts} \right) $ as $ i \to \infty $.
		Hence, $ \alpha_{g_i,g'}^{1/2} $ and its tangent map $ \left(\alpha_{g_i,g'}^{1/2} \right)_*$ $ C^{0} $-converges to that of $ \alpha_{g,g'}^{1/2} $ respectively in $ LM $.
		
		Combining the two facts above, the vector $ \left( \alpha^{1/2}_{g',g_i} \right)_*(v^{h,g_i}) $ converges to $ \left( \alpha^{1/2}_{g',g} \right)_*(v^{h,g}) $ in $ T(FM^{g'})\subset T(LM) $.
\end{proof}

\subsection{The Gromov-Hausdorff limit of tangent bundles with the metrics of Sasaki-type}\label{sct-2.2}
We first recall the definition of a Sasaki-type metric on the tangent bundle (c.f. \cite{Solorzano2010}). Let $ (M,g) $ be an $ n $-manifold and $ TM $ be its tangent bundle. Let $ h $ be another metric on $ M $ and $ \nabla $ be a connection on $ M $ compatible to $ h $, i.e., $ \nabla h=0 $. This connection induces a decomposition $ T(TM)=\mathcal{H}\oplus\mathcal{V} $ on $ TM $. 
For $ u\in T_pM $, the ‌vertical lift‌ $ u^v \in \mathcal{V}_{(p,v)} $ is defined by $$  u^v(f) =\left. \frac{\operatorname{d}}{\operatorname{d}t}\right|_{t=0} f(p,v+tu), $$
where $ f $ is any smooth function on $ TM $.
Every $ X \in T(FM^{g'}) $ admits a unique decomposition $ X=X^{H}+X^{V} $ with respect to $\mathcal{H}, \mathcal{V} $.
The metric $ \bar{g}=\bar{g}(g,h,\nabla) $ of Sasaki-type on $ TM $ is defined by 
\begin{eqnarray*}
	\bar{g}(X^H,Y^H) &=& g(\pi_{*}X,\pi_{*}Y),\\
	\bar{g}(X^H,Y^V) &=& 0,\\
	\bar{g}(X^V,Y^V) &=& h(u_X,u_Y),
\end{eqnarray*}
where $ \pi:TM\to M $ is the natural projection, and $ X^V, Y^V $ are the vertical lifts of $ u_X,u_Y $ respectively.

The Gromov-Hausdorff precompactness of tangent bundles with the metrics of Sasaki-type was known by Sol\'{o}rzano \cite{Solorzano2014}.
\begin{lemma}[c.f. {\cite[Theorem A, Theorem B]{Solorzano2014}}]\label{thm-TM-convergence}
	Let $ (M_i,p_i,g_i)\xrightarrow{GH} (X,x_0,d) $, and $ (M_i,g_i) $ be a sequence of $ n $-manifolds. Let $ (TM_i,\bar{g}_i) $ denote the tangent bundle of $ (M_i,g_i) $ equipped with the Sasaki-type metric $ \bar{g}_i $. Then, after passing to a subsequence, 
	the natural projection $ \operatorname{pr}_i:(TM_i,\bar{g}_i) \to (M_i,g_i) $ converges to a submetry $ \operatorname{pr}_\infty: (E,d_E) \to (X,d) $. In particular, for any $ q_i \in M_i $ with $ q_i  \to x\in X $, the fiber $ \operatorname{pr}_{i}^{-1}(q_i) $ Gromov-Hausdorff converges to $\operatorname{pr}_\infty^{-1}(x)$. The zero section $ \xi_i: M_i \to FM_i $ converges to $ \xi_\infty: X \to E $, which is an isometric embedding.
\end{lemma}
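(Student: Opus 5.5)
Because $\operatorname{pr}_i$ is a Riemannian submersion for every Sasaki-type metric, the plan is to reduce everything to three facts about a single manifold: $\operatorname{pr}_i$ is a submetry, the zero section is an isometric embedding, and balls in $(TM_i,\bar g_i)$ are totally bounded with bounds independent of $i$. Then I will pass to the limit with an Arzel\`a--Ascoli type argument for $1$-Lipschitz maps.

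\textbf{Step 1 (structure on a single manifold).} By the definition of $\bar g=\bar g(g,h,\nabla)$, the map $\operatorname{pr}:(TM,\bar g)\to(M,g)$ is a Riemannian submersion, since $\bar g$ restricted to $\mathcal H$ is the pull-back of $g$. So $\operatorname{pr}$ is $1$-Lipschitz. Horizontal lifts of curves preserve length, so $\operatorname{pr}$ maps every ball $B_r(w)$ onto $B_r(\operatorname{pr}(w))$, i.e.\ it is a submetry. Since $\nabla h=0$, parallel transport preserves $h$-norms. Hence the function $w\mapsto |w|_h$ is $1$-Lipschitz on $TM$: it is constant along horizontal curves and changes by at most the length along vertical ones. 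The zero vector field is parallel, so the horizontal lift of a curve starting on the zero section stays in it. This gives $d(\xi(p),\xi(q))\le d(p,q)$, and the reverse inequality holds because $\operatorname{pr}$ is $1$-Lipschitz. Thus $\xi$ is an isometric embedding. Within one fiber, the restricted distance satisfies $d(v,w)\le |v-w|_h$.

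\textbf{Step 2 (uniform precompactness).} From Step 1, $B_R(\xi_i(p_i))\subset\{(q,v): q\in B_R(p_i),\ |v|_h\le R\}$. Fix $\epsilon>0$, and take an $\epsilon$-net $\{q_j\}$ of $B_R(p_i)$ whose cardinality $N(\epsilon,R)$ is independent of $i$; this is possible since $(M_i,p_i)$ converges. In each $T_{q_j}M_i$, take an $\epsilon$-net of the $h$-ball of radius $R$; its cardinality depends only on $n,R,\epsilon$, because $(T_qM,h_q)$ is Euclidean. Now let $(q,v)$ be given. Join $q$ to a nearby $q_j$ by a curve of length less than $\epsilon$ and parallel transport $v$ along it. This yields $v'\in T_{q_j}M_i$ with $|v'|_h=|v|_h$ at distance less than $\epsilon$ from $(q,v)$, and $v'$ is $\epsilon$-close to a net point. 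Hence the balls are uniformly totally bounded, and Gromov's precompactness theorem gives a subsequence $(TM_i,\xi_i(p_i))\to(E,e_0)$. The same nets, restricted to a single fiber, show that the fibers $\operatorname{pr}_i^{-1}(q_i)$ with restricted metric form a precompact family.

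\textbf{Step 3 (limits).} Along a further subsequence, the $1$-Lipschitz maps $\operatorname{pr}_i$ and $\xi_i$ converge to $1$-Lipschitz maps $\operatorname{pr}_\infty$ and $\xi_\infty$. Distance preservation passes to the limit, so $\xi_\infty$ is an isometric embedding. The property that balls are mapped onto balls also passes to the limit: given $w\in E$ and $x'\in X$ with $d(\operatorname{pr}_\infty w,x')<r$, approximate $w$ by $w_i$ and $x'$ by $x_i'$, lift a short curve from $\operatorname{pr}_i(w_i)$ to $x_i'$ horizontally, and extract a limit. So $\operatorname{pr}_\infty$ is a submetry. For the fibers, take $q_i\to x$; points of $\operatorname{pr}_i^{-1}(q_i)$ clearly subconverge into $\operatorname{pr}_\infty^{-1}(x)$. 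Conversely, if $w\in\operatorname{pr}_\infty^{-1}(x)$ and $w_i\to w$, then $d(\operatorname{pr}_i w_i,q_i)\to0$. Horizontally lifting a minimal curve produces $w_i'\in\operatorname{pr}_i^{-1}(q_i)$ with $d(w_i,w_i')\to0$. Hence $\operatorname{pr}_i^{-1}(q_i)$ converges in the Hausdorff sense inside the ambient convergence, and with Step 2's precompactness this gives Gromov--Hausdorff convergence to $\operatorname{pr}_\infty^{-1}(x)$. The limiting fiber metric may be only a pseudo-metric on $\mathbb R^n$, which is consistent with the later discussion.

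\textbf{Expected obstacle.} I expect the main obstacle to be the fiber statement. The restricted distance on $\operatorname{pr}_i^{-1}(q_i)$ is not intrinsic: it is realized by curves leaving the fiber, which is exactly where holonomy enters. So one must check that the Hausdorff convergence of subsets, together with precompactness, identifies the limit of the fibers with their restricted metrics precisely as $\operatorname{pr}_\infty^{-1}(x)$ with the metric restricted from $E$. I would handle this by working throughout with the ambient $\epsilon$-approximations, so that the restricted metrics are compared directly.
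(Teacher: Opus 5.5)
Your argument is correct. The paper itself gives no proof of this lemma; it only quotes Sol\'orzano's Theorems A and B. Those rest on the explicit distance formula $d_{\bar g}((p,v),(q,u))=\inf_\gamma\sqrt{l^2(\gamma)+\|P_1^\gamma(v)-u\|_h^2}$ (Sol\'orzano's Proposition 3.12).

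Your route never needs that formula. You use only four things:
\begin{enumerate}
\item $\operatorname{pr}_i$ is a Riemannian submersion with length-preserving horizontal lifts;
\item $w\mapsto|w|_h$ is $1$-Lipschitz because $\nabla h=0$;
\item the zero section is parallel;
\item uniform total boundedness together with Arzel\`a--Ascoli.
\end{enumerate}
This makes clear that the lemma needs no curvature assumption at all, consistent with the paper's later remark. What the distance-formula route buys, and yours does not, is an explicit description of the restricted fiber metric via holonomy elements $a$ and the lengths $L(a)$. The paper needs exactly that next, for $H_{0,x}$, $H_x$ and Theorem C. So your proof settles this lemma but not its sequels.

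A few small points to tidy up.

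\emph{The $1$-Lipschitz claim.} ``Constant along horizontal curves, changes by at most the length along vertical ones'' does not cover curves that are neither horizontal nor vertical. Zigzag approximations only give the bound $\int(|c'^H|+|c'^V|)$, which is too weak. State it infinitesimally instead. Since $\nabla h=0$, the differential of $|\cdot|_h$ kills $\mathcal H$ and has norm at most $1$ on $\mathcal V$. As $\mathcal H\perp\mathcal V$, you get $|d|\cdot|_h(X)|\le|X^V|_{\bar g}\le|X|_{\bar g}$.

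\emph{Pointed convergence of fibers.} The fibers $T_{q_i}M_i$ are unbounded, so their convergence must be read in the pointed sense, based at $\xi_i(q_i)$. Concretely, you need Hausdorff convergence of $\operatorname{pr}_i^{-1}(q_i)\cap B_R(\xi_i(q_i))$ for every $R$. Your horizontal-lifting argument gives exactly this, ball by ball.

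\emph{The ``expected obstacle''.} It is not actually an obstacle. Once closed subsets converge in the Hausdorff sense inside a common realization of the ambient convergence, their restricted metrics converge automatically, because restricted distances are literally ambient distances. Holonomy enters only when one identifies the limit fiber, not when proving that it is the limit.

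\emph{Nets for a single fiber.} The nets in Step 2 live over several base points, so they do not restrict to a single fiber. For one fiber, use an $\epsilon$-net of the $h$-ball of radius $R$ in $T_{q_i}M_i$ directly. Alternatively, drop that sentence: precompactness of the fibers already follows from the Hausdorff convergence.
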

The restricted metric of $ (TM_i,\bar{g}_i) $ on each fiber $ \operatorname{pr}_{i}^{-1}(q_i) $, $ q_i \in M_i $ and their limits were studied in \cite{Solorzano2014}.

Let $ (M,g) $ be an $ n $-manifold and $ (TM,\bar{g}) $ be its tangent bundle with the  metric $ \bar{g}=\bar{g}(g,h,\nabla) $ of Sasaki-type. Let $ \operatorname{pr}:(TM,\bar{g})\to (M,g) $ be the natural projection.
Although the intrinsic distance of each fiber $ \operatorname{pr}^{-1}(p) $ is totally geodesic, it differs from the restricted distance. Specifically, the shortest geodesic between two points in $ \operatorname{pr}^{-1}(p) $ may leave the fiber. 
By \cite[Proposition 3.12]{Solorzano2010}, for any $ (p,v),(q,u) \in TM $ the length distance $ d_{\bar{g}}((p,v),(q,u)) $ on $ (TM,\bar{g}) $ is represented by
\begin{align*}
		 & d_{\bar{g}}((p,v),(q,u)) \\
		&= \inf_{\gamma} \left\{\left. \sqrt{l^2(\gamma)+\| P_1^\gamma(v)-u\|^2_h} \;\right| \; \gamma: [0,1] \to M, \gamma(0)=p, \gamma(1)=q\right\},
\end{align*}
	
where $\gamma$ is a piecewise smooth curve connectiong $p$ and $q$, $l(\gamma) $ is the length of $ \gamma $ in $ (M,g) $, and $ P_1^\gamma(v) $ denotes the parallel transport of $ v $ along $ \gamma $ with respect to $ \nabla $ at time $ 1 $. Consequently, the restricted distance on the fiber $ \pi^{-1}(p) $ is  
\begin{eqnarray*}
	d_{\bar{g}}((p,v),(p,u))=\inf_{a\in \operatorname{Hol}_p(\nabla)} \left\{ \sqrt{L^2(a)+\| av-u\|_h^2} \right\},
\end{eqnarray*} 
where $ L(a) $ is the infimum of lengths of piecewise smooth loops $ \gamma_a $ such that the parallel transport $ P_1^{\gamma_a} $ equals $ a $ as an element of holonomy group $ \operatorname{Hol}_p(\nabla) $ of $ \nabla $ at $ p $.
Since $ \nabla h=0 $, $ \operatorname{Hol}_p(\nabla) $ can be identified as a subgroup of the orthogonal group $ O(n) $ with respect to $ h_p $.

\begin{theorem}[{\cite[Theorem C]{Solorzano2014}}]
	Under the assumption of Lemma \ref{thm-TM-convergence}, there exists a compact Lie subgroup $ H_x $ of $ O(n) $ depending on the point $ x $, and a $1$-Lipschitz homeomorphism from the Euclidean cone $\mathbb R^n/H_x$ to the limit fiber $ \pi_\infty^{-1}(x) $ equipped with the restricted metric.
\end{theorem}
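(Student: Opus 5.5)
We prove Theorem C of Sol\'{o}rzano as follows. Fix $q_i\in M_i$ with $q_i\to x$, and identify each $\operatorname{Hol}_{q_i}(\nabla^i)$ with a subgroup of $O(n)$ through an $h_{q_i}$-orthonormal basis of $T_{q_i}M_i$. The formula above for the restricted distance,
$$d_i(v,u)=\inf_{a\in \operatorname{Hol}_{q_i}}\sqrt{L_i^2(a)+\|av-u\|^2},$$
gives a family of pseudo-metrics $d_i$ on the same $\mathbb R^n$. Each $d_i$ satisfies
$$\bigl|\,\|u\|-\|v\|\,\bigr|\le d_i(u,v)\le \|u-v\|.$$
The left inequality holds because holonomy acts isometrically, and the right one by taking $a=I$. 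Hence the $d_i$ are uniformly $1$-Lipschitz with respect to the Euclidean metric. By Arzel\`a--Ascoli on compact subsets of $\mathbb R^n\times\mathbb R^n$, after passing to a subsequence $d_i\to d_\infty$ locally uniformly, where $d_\infty$ is a pseudo-metric with $d_\infty\le\|\cdot\|$. Since the fibers $\operatorname{pr}_i^{-1}(q_i)$ Gromov--Hausdorff converge to $\operatorname{pr}_\infty^{-1}(x)$ by Lemma \ref{thm-TM-convergence}, and the identity maps $(\mathbb R^n,d_i)$ are exact copies of the fibers, the metric quotient $(\mathbb R^n,d_\infty)/\!\sim$ is isometric to the limit fiber. (The limit is first taken on balls; the whole fiber is then handled by exhaustion, since everything is $1$-Lipschitz.)

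Next we define the group. Set $H_0=\{\lim a_i : a_i\in\operatorname{Hol}_{q_i},\ L_i(a_i)\to 0\}$, taken along subsequences, and let $H_x$ be the closed subgroup generated by $H_0$. It is compact since $O(n)$ is compact, and it is a Lie group by Cartan's closed subgroup theorem. We then show that $d_\infty(u,v)=0$ if and only if $v\in H_x u$, in two steps.
\begin{itemize}
\item[($\Rightarrow$)] If $d_\infty(u,v)=0$, choose near-minimizers $a_i$ with $L_i(a_i)^2+\|a_iu-v\|^2\to0$. Passing to a subsequence $a_i\to a\in H_0$, so $au=v$.
\item[($\Leftarrow$)] For $a\in H_0$ we have $d_i(u,au)\le\sqrt{L_i(a_i)^2+\|a_iu-au\|^2}\to0$. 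The subtle point is that $H_0$ need not be a group, so we must pass to the generated closed subgroup. For products, the length of a concatenated loop is additive, so $L_i(a_ib_i)\le L_i(a_i)+L_i(b_i)$; likewise $L_i(a_i^{-1})=L_i(a_i)$ using the reversed loop. Hence $H_0$ is closed under products and inverses, after diagonal subsequence choices. Closure then holds because $d_\infty(u,\cdot)$ is continuous.
\end{itemize}

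Finally, $d_\infty$ is $H_x$-invariant: $d_\infty(hu,hv)=d_\infty(u,v)$. Indeed, by the triangle inequality $d_\infty(hu,hv)\le d_\infty(hu,u)+d_\infty(u,v)+d_\infty(v,hv)=d_\infty(u,v)$, and the reverse inequality follows by applying this to $h^{-1}$. Thus $d_\infty$ descends to a metric on $\mathbb R^n/H_x$.

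The map
$$\Phi:\mathbb R^n/H_x\to \operatorname{pr}_\infty^{-1}(x),$$
equipped with the quotient Euclidean metric (the Euclidean cone over $S^{n-1}/H_x$), is then a well-defined bijection. It is $1$-Lipschitz because $d_\infty(u,v)\le \inf_h\|hu-v\|$. Its domain is proper and its target is Hausdorff, and a continuous bijection with these properties restricted to closed balls (compact) is a homeomorphism onto its image. Norms are preserved, $d_\infty(0,u)=\|u\|$, so balls correspond to balls and the homeomorphism holds globally. The main obstacle is the $(\Leftarrow)$ step: establishing that $H_0$ is already a closed subgroup. This requires careful handling of the subsequence choices, uniformly in the elements being composed, which we would do via a diagonal argument over a countable dense subset of $H_0$.
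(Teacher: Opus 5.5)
Your overall architecture matches the paper's sketch: a limit pseudo-metric $d_\infty$ on $\mathbb R^n$, its zero set described by group orbits, and the identity map from the Euclidean cone being $1$-Lipschitz and a homeomorphism. However, there is a gap in the $(\Leftarrow)$ step, which you single out as the main obstacle. Your argument there rests on a claim that is false: $H_0$ need not be closed under products. An element $a\in H_0$ may be realized only along one subsequence, and $b\in H_0$ only along a disjoint one. Then there are no indices at which $a_ib_i$ is defined. No diagonal argument over a dense subset of $H_0$ can fix this, because the obstruction is not uniformity.

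For a concrete instance, take the rescaled Eguchi--Hanson metrics $\lambda_i^{-1}g$. Choose the orthonormal frames at $q_i$ so that $\operatorname{Hol}_{q_i}$ becomes $Sp(1)_L\subset SO(4)$ for even $i$ and $Sp(1)_R$ for odd $i$; these are conjugate by an orientation-reversing element of $O(4)$. All holonomy elements have $L_i\to 0$ uniformly, so $d_i(u,v)\to\bigl|\|u\|-\|v\|\bigr|$ along the whole sequence. Yet $H_0=Sp(1)_L\cup Sp(1)_R$, which is not a group.

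You never actually need $H_0$ to be a group. Put $K=\{h\in O(n): d_\infty(u,hu)=0 \text{ for all } u\}$. The triangle-inequality computation you already use for invariance shows that $K$ is a subgroup:
$d_\infty(u,h_1h_2u)\le d_\infty(u,h_2u)+d_\infty(h_2u,h_1(h_2u))=0$, and $d_\infty(u,h^{-1}u)=d_\infty(h(h^{-1}u),h^{-1}u)=0$.
$K$ is closed because $d_\infty$ is continuous, and it contains $H_0$ by your estimate $d_i(u,au)\le\sqrt{L_i(a_i)^2+\|a_iu-au\|^2}\to 0$. Hence $K\supseteq\overline{\langle H_0\rangle}=H_x$, which is exactly $(\Leftarrow)$.

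This is why the paper defines $H_x$ directly as the stabilizer $K$ in \eqref{eq-H_x}. With that definition, being a closed subgroup is automatic, $(\Leftarrow)$ holds by definition, and $(\Rightarrow)$ follows from $H_{0,x}\subset H_x$. Your $\overline{\langle H_0\rangle}$ can be strictly smaller than $K$. For example, for orientable surfaces with $d_\infty(u,v)=\bigl|\|u\|-\|v\|\bigr|$ you get $SO(2)$ versus $O(2)$. But the two groups have the same orbits, so either choice proves the statement.

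With this repair the rest of your proposal is sound. Your inequality $d_\infty(u,v)\le\inf_h\|hu-v\|$ is the correct reason the map is $1$-Lipschitz. The paper's sketch instead asserts $d_0\le d_i$, which points the wrong way and fails when some holonomy has $L$ bounded below. Your norm-preservation argument for the homeomorphism is a self-contained substitute for the paper's appeal to Sol\'orzano's Proposition 1.15.
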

The key points in proving Theorem \ref{thm-TM-convergence} and the explicit description of group $ H_x $ are as follows (c.f. {\cite{Solorzano2014}}).

Let $ (TM_i,p_i,\bar{g}_i) $ be a ‌convergent sequence‌ of tangent bundles equipped with Sasaki-type metrics $ \bar{g}_i=\bar{g}_i(g_i,h_i,\nabla_i) $. 
Each fiber $ \pi_{i}^{-1}(q_i) $ under the intrinsic metric $ \bar{h}_i $ is isometric to Euclidean space via $ \phi_{q_i}: (\mathbb{R}^n, \| \cdot \| )\to (\pi_{i}^{-1}(q_i),\bar{h}_i) $, and the restricted distance $ d_{\bar{g}_i} $ on $ \pi_{i}^{-1}(q_i) $ pulled back to $ \mathbb{R}^n $ is 
\begin{eqnarray*}
	d_{\bar{g}_i} (u,v) = \inf_{a_i\in \operatorname{Hol}_{q_i}(\nabla_i)} \left\{ \sqrt{L^2(a_i)+\| a_iv-u\|^2} \right\}, \forall u,v \in \mathbb{R}^n.
\end{eqnarray*}
We identify $ \pi_{i}^{-1}(q_i) $ with $ (\mathbb{R}^n,\|\cdot \|) $, and denote the restricted distance induced by $ \bar{g}_i $ as $ d_{\bar{g}_i} $ for simplicity.

Let $ d_\infty $ be the limit semi-metric of $ d_{\bar{g}_i} (u,v) $ on $ \mathbb{R}^n $. By \cite[Theorem 3.6]{Solorzano2014} (resp. \cite[Theorem 3.8]{Solorzano2014}), for any $ u,v $, it holds that $ d_\infty(u,v)=0 $ if and only if there exists $ h\in H_{0,x} $ (resp. $ h\in H_{x} $) such that $ u=hv $, where
$ H_{0,x} $ (resp. $ H_{x} $) is defined by
\begin{eqnarray}\label{eq-H_0,x}
	H_{0,x}=\{ h\in O(n) | h= \lim_{i\to\infty} a_i, \lim_{i\to\infty} L(a_i)=0 \}\subset O(n).
\end{eqnarray}
\begin{eqnarray}\label{eq-H_x}
	( \text{resp. }H_{x}=\{ h \in O(n) | \forall v\in \mathbb{R}^n, d_{\infty}(hv,v)=0 \} \subset O(n). )
\end{eqnarray}

Let $ \operatorname{Id} $ be the identity map from Euclidean cone $\mathbb{R}^n/H_x$ to $ (\mathbb{R}^n/H_x,d_\infty) $. 
By definition, for all $ i $ and any $u,v \in \mathbb{R}^n $, $d_0(u,v) \le d_i(u,v)$, where $d_0$ is the metric on the Euclidean cone $\mathbb{R}^n/H_x$. Hence the identity map $ \operatorname{Id} $ is $ 1 $-Lipschitz. By \cite[Proposition 1.15]{Solorzano2010} it can be verified that $\operatorname{Id}$ is also a homeomorphism.

\subsection{Smoothing a Riemannian Metric.}\label{subsct-smoothing-metric}
Let us recall the weak $ C^{k,\alpha} $-harmonic norm defined by Petersen-Wei-Ye \cite{Petersen-Wei-Ye1999}.
Fix an integer $ k\geq 0 $ and a number $ 0\leq \alpha \leq 1 $. The weak $ C^{k,\alpha} $-norm of an $ n $-dimensional Riemannian manifold $ (M,g) $ on scale $ r>0 $, $ \| (M,g)\|^{W}_{C^{k,\alpha},r} $, is defined to be the infimum of positive number $ Q $ such that there exist local diffeomorphism:
$$ \varphi_{\tau} : B_r(0)\subset \mathbb{R}^{n} \to U_{\tau}\subset M $$ 
with images $ U_{\tau} $, $ \tau \subset \Lambda $, where $ B_r(0) $ denotes the closed Euclidean ball of radius $ r $ centered at the origin and $ \Lambda $ is an index set, and with the following properties:
\begin{itemize}
	\item [(1)] $ e^{-2Q}\delta_{\lambda\mu} \leq g_{\tau,\lambda\mu}\leq  e^{2Q}\delta_{\lambda\mu} $,
	\item[(2)] every metric ball $ B_{\frac{r}{10}e^{-Q}}(p) $, $ p\in M $ lies in some set $ U_{\tau} $,
	\item[(3)] $ r^{|l|+\alpha}\|\partial^{l}g_{\tau,\lambda\mu}\|_{C^{\alpha}}\leq Q $ for all multi-indices $ l $ with $ 0\le |l| \le k $.
\end{itemize}
Here $ g_{\tau,\lambda\mu} $ denote the coefficients of $ g_{\tau}=\varphi_{\tau}^{*}g $ on $ B_r(0) $. 

If in addition, the local inverse map $ \varphi_{\tau}^{-1} $ is harmonic, it is called weak  $ C^{k,\alpha} $-harmonic norm on scale $ r $,  $ \| (M,g)\|^{W,h}_{C^{k,\alpha},r} $.

Given $ Q>0 $, the weak $ C^{k,\alpha}$-harmonic radius $ r^{w,h}_{C^{k,\alpha}}$ is defined to be the max scale $ r $ satisfying $ \| (M,g)\|^{W,h}_{C^{k,\alpha},r}\le Q $.

\begin{theorem}[{Theorem 1.1, \cite{Petersen-Wei-Ye1999}}]
	\label{thm-smoothing-metric}
	Given $ \epsilon>0 $, if a complete $ n $-manifold $ (M,g) $ has the weak harmonic $ C^{0,\alpha}$-norm $ \| (M,g)\|^{h}_{C^{0,\alpha},r} \leq Q(r) $ for all positive $ r \leq 1 $, where $ Q:(0,\infty) \to [0,\infty] $ is a nondecreasing function and $ \lim_{r\to 0}Q(r)=0 $, then there is a metric $ g_{\epsilon} $ on $ M $ such that 
	\begin{eqnarray*}
		e^{-\epsilon}g \leq g_{\epsilon} \leq e^{\epsilon}g,
	\end{eqnarray*}
	\begin{eqnarray}\label{ineq-weak-C0-norm}
		\| (M,g_{\epsilon})\|_{C^{0,\alpha},r}^{W} \leq 2Q(r),
	\end{eqnarray}
	\begin{eqnarray}\label{ineq-weak-Ck-norm}
		\| (M,g_{\epsilon})\|_{C^{k,\alpha},r}^{W} \leq \tilde{Q},
	\end{eqnarray}
	where $ k $ is an arbitrary positive integer and $ \widetilde{Q}=\widetilde{Q}(n,k,\epsilon,\alpha,Q(r)) $ denotes a positive number depending only on $ n,k,\epsilon,\alpha $ and $ Q(r) $.
\end{theorem}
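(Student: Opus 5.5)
The plan is to obtain $g_\epsilon$ as the time-$t$ metric of a short-time Ricci flow (more precisely a Ricci--DeTurck flow), $g_\epsilon=g(t_\epsilon)$, started at $g(0)=g$. The time $t_\epsilon$ will be chosen small depending only on $n,\alpha,\epsilon$ and the function $Q$. Everything is done locally in the charts $\varphi_\tau:B_r(0)\to U_\tau$ from the definition of the weak harmonic norm. This is natural because harmonic coordinates turn the Ricci tensor into a quasilinear elliptic operator in the metric coefficients, $\operatorname{Ric}_{\lambda\mu}=-\tfrac12 g^{ab}\partial_a\partial_b g_{\lambda\mu}+\text{(quadratic in }\partial g)$. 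The associated DeTurck flow is then a strictly parabolic system for the coefficient matrix $g_{\tau,\lambda\mu}$.

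\textbf{Step 1 (local parabolic problem).} In each chart I will solve the DeTurck-modified flow, with the Euclidean background used to fix the gauge. The initial data are only $C^{0,\alpha}$-close to $\delta_{\lambda\mu}$ at scale $r$, with closeness measured by $Q(r)$. I will use Schauder/maximum-principle estimates for quasilinear parabolic systems with H\"older-continuous, almost-Euclidean leading coefficients. These should give a solution on a time interval $[0,T]$, with $T$ comparable to $r^2$, satisfying
\[
\|g(t)-g(0)\|_{C^0}\le C\,Q(r),\qquad t^{j/2}\,\|\partial^j g(t)\|_{C^{\alpha}}\le C(n,j,\alpha,Q(r)).
\]
The first bound is the source of $e^{-\epsilon}g\le g_\epsilon\le e^{\epsilon}g$. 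The second gives the $C^{k,\alpha}$ bound at positive time.

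\textbf{Step 2 (globalization).} The flows in different charts must be patched into a single global metric. For this I plan to run the genuine Ricci flow on $M$, which is diffeomorphism invariant. The local DeTurck solutions then serve only as a priori estimates: each is related to the Ricci flow on $U_\tau$ by a diffeomorphism solving a harmonic map heat flow. Because $M$ may be noncompact, I expect to need a Shi-type local existence argument, exhausting $M$ by compact domains and using the uniform local estimates to pass to a limit. This is the reason the property that every ball $B_{\frac{r}{10}e^{-Q}}(p)$ lies in some $U_\tau$ matters: it makes the estimates uniform over $M$.

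\textbf{Step 3 (choice of time and bookkeeping).} Since $Q(r)\to0$ as $r\to0$, I first fix $r_0$ with $CQ(r_0)\le\epsilon$, then take $t_\epsilon$ comparable to $r_0^2$. This yields both the $e^{\pm\epsilon}$ comparison and $\tilde Q=\tilde Q(n,k,\epsilon,\alpha,Q)$.

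For the scale-by-scale estimate \eqref{ineq-weak-C0-norm}, I will compare, for each $r\le1$, the coordinates for $g$ at scale $r$ with the same coordinates for $g(t)$. Two cases arise. If $r^2\gtrsim t$, the $C^0$ change plus the H\"older bound gives norm $\le 2Q(r)$. If $r^2\ll t$, the metric is already smooth at scale $r$, and the $C^{1}$ bound makes its $C^{0,\alpha}$ norm at scale $r$ small. Here the harmonic condition is dropped, which is why only the weak norm $\|\cdot\|^W$ appears in the conclusion.

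\textbf{Main obstacle.} I expect the hardest step to be Step 1: existence of the flow for a time controlled purely by a $C^{0,\alpha}$ harmonic-norm bound, with $C^0$ closeness to the initial metric. There is no curvature bound at $t=0$, so Shi's estimates cannot be used directly. My first attempt will be a Koch--Lamm/Simon-type argument for rough initial data: treat the DeTurck flow as a perturbation of the Euclidean heat equation in the harmonic charts, and close a contraction in a weighted space with norm $\sup_t t^{1/2}\|\partial g\|$. This uses the smallness of $Q(r)$ at small scales.
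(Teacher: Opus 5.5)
The paper gives no proof of this statement; it is quoted from Petersen--Wei--Ye, so your outline has to stand on its own. It has a genuine gap at the scale-by-scale bound $\|(M,g_\epsilon)\|^{W}_{C^{0,\alpha},r}\le 2Q(r)$. As you read it, this must hold for every $r\le 1$ with the single metric $g_\epsilon=g(t_\epsilon)$.

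\textbf{The fine scales $r^2\ll t_\epsilon$.} Here you only show that the norm is \emph{small}. But the target $2Q(r)$ also tends to $0$, and possibly fast: for a smooth metric on a compact $M$ one may take $Q(r)=C_g r^2$ for all small $r$ (via harmonic coordinates close to normal coordinates).
\begin{itemize}
\item A $C^1$ bound $|\partial g_\epsilon|\lesssim t_\epsilon^{-1/2}$ only gives a norm $\lesssim r\,t_\epsilon^{-1/2}$ at scale $r$. This is not $\le 2C_g r^2$ for small $r$.
\item Normalizing coordinates to first order at each centre improves this to $\lesssim r^2\sup|\partial^2 g_\epsilon|$. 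However, estimates that use only the scale-$r_0$ data give at best $\sup|\partial^2 g_\epsilon|\lesssim (t_\epsilon/r_0^2)^{\alpha/2}Q(r_0)/t_\epsilon$.
\item With $t_\epsilon=c\,r_0^2$ and $Q(r_0)=C_g r_0^2$, this is $\lesssim c^{\alpha/2-1}C_g$. Since $c$ must be small, the resulting bound is a large multiple of $Q(r)$, not $2Q(r)$.
\end{itemize}
What is missing is an estimate that \emph{propagates} the initial fine-scale modulus, instead of one that exploits the smoothing at scale $\sqrt{t_\epsilon}$. In the smooth example this means something like a doubling-time bound $\sup|\mathrm{Rm}_{g(t)}|\le(1+\delta)\sup|\mathrm{Rm}_g|$ for $t\le t_\epsilon$. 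In general it means a near-contraction of the scale-$r$ oscillation under the flow, with errors small relative to $Q(r)$ itself rather than relative to $\epsilon$.

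\textbf{The coarser scales $r\gtrsim\sqrt{t_\epsilon}$.} This range is not automatic either. The $C^0$ change you control is of size $CQ(r_0)$. To stay below $2Q(r)$ you must redo the estimate in the scale-$r$ harmonic charts and extract the gain $(\sqrt{t_\epsilon}/r)^{\alpha}Q(r)$ from the H\"older data. This closes only for $r\ge C\sqrt{t_\epsilon}$. Below that threshold a chart-wise parabolic argument is unavailable, because the flow has run for many parabolic times relative to $r^2$.

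\textbf{Step 2.} Your route to $e^{-\epsilon}g\le g_\epsilon\le e^{\epsilon}g$ and to the $C^{k,\alpha}$ bound is reasonable in outline, but Step 2 hides its core, in three places.
\begin{itemize}
\item \emph{Relating local and global flows.} A DeTurck solution in a chart, with whatever boundary or cut-off data you impose, is not the global Ricci flow written in a gauge. The two are linked by the heat flow of the coordinate functions only while that map stays a controlled diffeomorphism. That already presupposes the control of $g(t)$ you are trying to prove. You therefore need an explicit continuity argument: fix a scale-invariant bound (for instance on $t\,|\mathrm{Rm}_{g(t)}|$, or on the harmonic radius of $g(t)$), let $T^*$ be the maximal time on which it holds, and improve it on $[0,T^*]$ through interior estimates for the gauge-fixed flow.
\item \emph{Existence on noncompact $M$.} This is not covered by ``Shi-type exhaustion''. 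Shi solves a Dirichlet problem for the DeTurck flow relative to one global background metric of bounded curvature. You have none: $g$ carries no curvature bound, and your gauges are chart-wise Euclidean.
\item \emph{Transferring closeness to $g(t_\epsilon)$.} To pass $C^0$-closeness from the gauge-fixed solution to the Ricci-flow metric $g(t_\epsilon)$, you must integrate $|\operatorname{Ric}_{g(t)}|$ in time (equivalently, control the gauge diffeomorphisms in $C^1$). A Koch--Lamm-type contraction in a space weighted only by $t^{1/2}\|\partial g\|$ yields at best $|\partial^2 g|\lesssim t^{-1}$, whose time integral diverges logarithmically. So the H\"older gain $|\partial^2 g(t)|\lesssim t^{-1+\alpha/2}$ coming from the $C^{0,\alpha}$ data has to be built into the function space.
\end{itemize}
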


By its proof, Theorem \ref{thm-smoothing-metric} can be locally applied around a point $ p\in (M,g) $. Furthermore, if $ (M,g) $ is a Riemannian $n$-manifold with $ \operatorname{Ric_g}\ge -(n-1) $ and  $ r_p=r^{w,h}_{p,C^{0,\alpha}}(Q)$ is the weak $ C^{0,\alpha}$-harmonic radius  at $ p$ for some fixed positive number $ Q>0 $. Then, for $0< \alpha' <\alpha $, the function $ Q(r) $ in Theorem \ref{thm-smoothing-metric} can be canonically determined. Indeed, for a sequence of such balls $ (B_{r_{p_i}}(p_i),g_i) $ with $ r_{p_i}\ge r_0>0 $, by the splitting theorem any limit of blowing up $ (B_{r_{p_i}}(p_i),\lambda_i^{2}g_i) $ with $ \lambda_i \to \infty $ is isometric to an Euclidean space $ \mathbb{R}^n $. By the $ C^{0,\alpha'} $-compactness and the continuity of $ C^{0,\alpha} $-norm \cite[Proposition 2.1, (iii)]{Petersen1997}, $ \| (B_{r_{p_i}}(p_i),g_i)\|_{C^{0,\alpha'},r} \le Q(r|n) \to 0 $ as $ r\to 0 $.

By Theorem \ref{thm-smoothing-metric}, we conclude that there is a local metric $ g_{p,\epsilon} $ on $ B_{r_p}(p) $ satisfying
\begin{gather}
	e^{-\epsilon}g \leq g_{p,\epsilon} \leq e^{\epsilon}g, \label{ineq-metric-close}
\end{gather}
\begin{gather}\label{ineq-coinvariant-close}
	|\nabla-\nabla^\epsilon|_g\le c(n,\epsilon)r_p^{-1},
\end{gather}
\begin{gather}
	|\nabla^{\epsilon,k}R^\epsilon|_{g_{p,\epsilon}} \le c(n,k,\epsilon)	r_p^{-k-2},\label{ineq-curvature-close}
\end{gather}
where  $ R $, $ R^\epsilon $ denote the curvature tensor of $ g $, $g_{\epsilon}$ respectively. The inequality \eqref{ineq-coinvariant-close} follows from \cite[Proposition 4.4]{Petersen-Wei-Ye1999}.

Following the proof of Theorem 2.6 in Cheeger-Tian \cite{Cheeger-Tian2006}, the locally defined metrics $ g_{p,\epsilon} $ can be glued together to a global metric $ g_\epsilon $ satisfying the following properties.

\begin{proposition}[c.f. \cite{Cheeger-Tian2006}]\label{prop-nearby-metric}
	Given $ \epsilon>0 $. Let $ (M,g) $ be an $ n $-manifold with Ricci curvature $ \operatorname{Ric_g}\ge -(n-1) $. Then, there exists a metric $ g_\epsilon $ on $ M $ such that at any $ p\in M $ with weak $ C^{0,\alpha} $-harmonic radius $ r_p $,
	{\allowdisplaybreaks
		\begin{gather*}
			e^{-\epsilon}g \leq g_{\epsilon} \leq e^{\epsilon}g,\\
			|\nabla-\nabla^\epsilon|_{g,p}\le c(n,\epsilon) r_p^{-1},\\
			|\nabla^{\epsilon,k}R^\epsilon|_{g_{\epsilon},p} \le c(n,k,\epsilon)	r_p^{-k-2}.
	\end{gather*}}
\end{proposition}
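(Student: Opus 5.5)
The plan is to follow the proof of Theorem 2.6 in Cheeger-Tian \cite{Cheeger-Tian2006}: glue the local smoothings $g_{p,\epsilon}$ from \eqref{ineq-metric-close}--\eqref{ineq-curvature-close} with a partition of unity adapted to the harmonic radius function $p\mapsto r_p$. First I record that $r_p$ is $1$-Lipschitz, or at least comparable on balls. If $q\in B_{r_p/4}(p)$, then $r_q\ge r_p/2$, since a harmonic chart of scale $r_p$ around $p$ restricts to one of scale $r_p/2$ around $q$; the symmetric bound follows the same way. Hence on each ball $B_{r_p/10}(p)$ the radius $r$ is comparable to $r_p$ up to a factor $2$.

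Next I choose a maximal $\tfrac{1}{100}r$-separated family $\{p_j\}$, meaning $d(p_j,p_k)\ge \frac1{100}\max(r_{p_j},r_{p_k})$. The balls $B_j=B_{r_{p_j}/50}(p_j)$ cover $M$. By the lower Ricci bound and relative volume comparison (Bishop--Gromov) applied at scales comparable to $r$, the covering has multiplicity bounded by $N(n)$. On each $B_j$ I use the harmonic coordinates of scale $r_{p_j}$ to build cutoffs $\phi_j$ supported in $2B_j$ and equal to $1$ on $B_j$, with $|\partial^k\phi_j|\le c(n,k)r_{p_j}^{-k}$ in those coordinates. Normalizing gives a partition of unity $\psi_j=\phi_j/\sum\phi_l$ with the same derivative bounds, because only $N(n)$ terms overlap and their radii are comparable. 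I then define
$$g_\epsilon=\sum_j\psi_j\, g_{p_j,\epsilon}.$$
The two-sided bound $e^{-\epsilon}g\le g_\epsilon\le e^{\epsilon}g$ is immediate from convexity of the cone of positive forms.

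For the derivative estimates, the key input is that on overlaps $g_{p_j,\epsilon}$ and $g_{p_k,\epsilon}$ are both $C^{k}$-controlled at scale $\sim r$ in a common harmonic chart. Concretely, \eqref{ineq-weak-Ck-norm} gives $|\partial^l g_{p_j,\epsilon}|\le \tilde Q\, r^{-l}$ in the rescaled chart, and transition maps between harmonic charts of comparable scale are $C^{1,\alpha}$ with scale-invariant bounds. Differentiating the sum by Leibniz, every term carries a factor $r^{-l}$. This yields $|\partial^l g_\epsilon|\le c(n,l,\epsilon)r_p^{-l}$ in the chart at $p$. It gives $|\Gamma^\epsilon|\le c\,r_p^{-1}$, and combined with $|\Gamma|\le c\,r_p^{-1}$ for $g$ in harmonic coordinates (the $C^{0,\alpha}$ bound plus the Ricci bound gives $W^{1,p}$, or one invokes \cite[Proposition 4.4]{Petersen-Wei-Ye1999} as in \eqref{ineq-coinvariant-close}), it gives $|\nabla-\nabla^\epsilon|_g\le c(n,\epsilon)r_p^{-1}$. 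Expressing $\nabla^{\epsilon,k}R^\epsilon$ through $\partial^{k+2}g_\epsilon$ and lower-order terms then gives $c(n,k,\epsilon)r_p^{-k-2}$.

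The main obstacle is the higher-derivative control when charts of different points are mixed. Harmonic coordinates of $g$ are only $C^{1,\alpha}$-compatible, so measuring $\partial^l g_{p_k,\epsilon}$ in the chart of $p_j$ is not directly controlled. I would resolve this the Cheeger-Tian way, by gluing inductively or smoothing only in one chart per region. Alternatively, I would apply Theorem \ref{thm-smoothing-metric} to a local mollification procedure (a Ricci-flow or heat-type smoothing, as in Petersen--Wei--Ye) that is canonical, hence chart-independent. In that case the local metrics need no gluing, and the estimates at scale $r_p$ follow from the local version of Theorem \ref{thm-smoothing-metric} applied on $B_{r_p}(p)$, rescaled to unit size.
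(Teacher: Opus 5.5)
Your construction is the one the paper sketches: a Cheeger--Gromov cover at scale comparable to $r_p$ with bounded multiplicity, local Petersen--Wei--Ye smoothings $g_{p_j,\epsilon}$, cutoffs, and the convex combination $g_\epsilon=\sum_j\psi_j g_{p_j,\epsilon}$. The two-sided bound is fine. The gap is the derivative step, which you flag yourself but do not close.

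The Leibniz claim $|\partial^l g_\epsilon|\le c\,r_p^{-l}$ in the chart at $p$ has no basis. The bound \eqref{ineq-weak-Ck-norm} controls $g_{p_j,\epsilon}$ only in charts adapted to $g_{p_j,\epsilon}$. It says nothing in $g$-harmonic coordinates, where you build the cutoffs, or in the chart of a neighbouring ball. On an overlap, the only relations between $g_{p_j,\epsilon}$ and $g_{p_k,\epsilon}$ are $e^{\pm 2\epsilon}$-closeness and $|E_{jk}|\le c\,r^{-1}$, where $E_{jk}=\nabla^{p_j,\epsilon}-\nabla^{p_k,\epsilon}$. (Also, $W^{1,p}$ control of $g$ does not give the pointwise bound $|\Gamma|\le c\,r_p^{-1}$. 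That needs $C^{1,\alpha}$ control, which is available under the two-sided Ricci bound used later.)

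This information does suffice for the connection estimate and for $k=0$ if you compute tensorially. You need cutoffs with $|d\psi_j|\le c\,r^{-1}$ and $|\operatorname{Hess}\psi_j|\le c\,r^{-2}$ with respect to the smoothed metrics, which is what the paper's sketch uses. Indeed, $g_\epsilon\left((\nabla^{g_\epsilon}-\nabla^{p_j,\epsilon})(X,Y),Z\right)=-\sum_k\psi_k\, g_{p_k,\epsilon}(E_{jk}(X,Y),Z)$ plus terms linear in the $d\psi_k$. The only second-order quantities in the curvature of $g_\epsilon$ are then $\operatorname{Hess}\psi_k$ and the skew part of $\nabla^{p_j,\epsilon}E_{jk}$. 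That skew part equals $R^{p_j,\epsilon}-R^{p_k,\epsilon}$ up to $E_{jk}\wedge E_{jk}$, so it is $O(r^{-2})$.

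For $k\ge 1$, however, the full derivatives $(\nabla^{p_j,\epsilon})^mE_{jk}$, $m\le k$, and $k+2$ derivatives of the cutoffs enter, and nothing in \eqref{ineq-metric-close}--\eqref{ineq-curvature-close} controls them. Here is an example. Take $g=\delta$ on $\mathbb R^2$, and let $\phi(x)=x+f(x_2)e_1$ with $f(s)=\eta\lambda^{-2}\sin(\lambda s)$. The flat metrics $\delta$ and $\phi^*\delta$ satisfy all three local estimates at unit scale uniformly in $\lambda$; their connection difference is $O(\eta)$. Yet $h=\psi(x_1)\delta+(1-\psi(x_1))\phi^*\delta$ has Gauss curvature $K_h=-\psi'(x_1)f''(x_2)+O(\eta^2\lambda^{-2})$. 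So $\sup|\nabla R_h|$ is of order $\eta\lambda\sup|\psi'|$, which is unbounded as $\lambda\to\infty$.

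Your remedies do not supply the missing input:
\begin{itemize}
\item Gluing inductively still averages metrics whose difference is controlled only up to first derivatives.
\item A canonical smoothing applied on $B_{r_p}(p)$, rescaled to unit size, acts at scale $r_p$, which differs between overlapping balls. A flow on a ball also depends on how it is cut off. So the local outputs still disagree on overlaps and must be glued after all.
\item A single global flow acts at one fixed scale only.
\end{itemize}

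What is needed is one of two things. Either prove that the local smoothings are $C^k$-compatible at scale $r$ on overlaps. Or add a second, global smoothing after the $k=0$ gluing. For example, pass from the glued metric $\hat g$ to $\rho^{-2}\hat g$ with $\rho=\sum_j\psi_j r_{p_j}$, which is complete with bounded curvature. Run Shi's Ricci flow for a short fixed time, then multiply back by $\tilde\rho^{\,2}$ for a heat-flow smoothing $\tilde\rho$ of $\rho$.

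The paper's sketch asserts the estimates ``by direct calculation'' from gradient and Hessian bounds on the cutoffs. That argument gives the $C^0$, connection and $k=0$ bounds in the way described above, but it does not address the $k\ge1$ point either. This matters because $k=1$ is what Theorem \ref{thm-Ric-curvature-bounded} uses.
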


The key idea in proving Proposition \ref{prop-nearby-metric} is as follows. First, by  \cite[Covering lemma 2.2]{Cheeger-Gromov1985-2} there is an open cover of $ M $ consisting of open balls $ \{B_{r_{i}}(p_i) \}_{i\in I} $ whose radius depends on the weak $ C^{0,\alpha} $-harmonic radius $ r_{p_i} $ at the center such that $ r_{p_i} $ is close to $ r_{p_j} $ whenever $ B_{r_{i}}(p_i)\cap B_{r_{j}}(p_j)\neq \emptyset $. Furthermore, for $ \forall p\in M $ there are at most $ N(n,\alpha) $ balls covering point $ p$. Let $ g_{i,\epsilon} $ be the locally defined metric on $ B_{r_{p_i}}(p_i) $ defined via the smoothing pro-cedure guaranteed by Theorem \ref{thm-smoothing-metric}. Then, the difference of connections of $ g_{i,\epsilon}, g_{j,\epsilon} $ satisfies \eqref{ineq-coinvariant-close} once they are both defined at a point.
Second, by choosing a suitable cut off function $ h $ provided by \cite[Lemma 5.3]{Cheeger-Gromov1985} with bounded norm of Hessian $ \operatorname{Hess} h $ and gradient of $ h $ under $ g_{i,\epsilon} $, the metrics $ g_{i,\epsilon} $ can be glued together via a partition of unity subordinates to the cover to a globally defined metric $ g_\epsilon $.
Since the norm of $ \operatorname{Hess} h $ and $ \operatorname{d} h $ is bounded and the cover balls at a point is at most $ N(n,\alpha) $, it can be directly verified by calculation that the global metric $ g_\epsilon $ still satisfies the inequalities \eqref{ineq-metric-close}, \eqref{ineq-coinvariant-close}, \eqref{ineq-curvature-close} up to a change of constants.

\section{New metric on orthonormal frame bundle with better regularity}\label{sct-3}

In this section, we define a new metric on orthonormal frame bundle $ FM $ and study its basic properties.

Let $ g$ and $g' $ be two Riemannian metrics on $ M $. Instead of the Levi-Civita connection of $ g $, we use that of $ g' $ to define the horizontal distribution on $ FM^{g} $. This choice preserves the same order regularity of $ g $ when $ g'=g_\epsilon $ satisfies \eqref{ineq-metric-close}, \eqref{ineq-coinvariant-close}, \eqref{ineq-curvature-close}. 

Recall that there is an isomorphism $ \alpha_{g',g}^{1/2} $ between the orthonormal frame bundles $ FM^{g} $ and $ FM^{g'} $ of $ g $ and $ g' $. 
The horizontal distribution of $ FM^{g'} $ can be pulled back by $ \alpha_{g',g}^{1/2}: FM^{g} \to FM^{g'} $ to $ FM^{g} $. Then, the metric $ g $ can be lifted to $ FM^g $ along horizontal distribution induced by $ g' $. Equivalently, the metric can also be defined by lifting $ g $ to $ FM^{g'} $ as follows.

\begin{definition}\label{defn-admissible-lifting-metric}
	Let $ \mathcal{H} $ be the horizontal distribution on tangent bundle $ T(FM^{g'}) $ determined by the Levi-Civita connection $ \nabla' $ of $ g' $, i.e.,  $ X \in \mathcal{H} $ at $ (p,e) $ if and only if $ X $ is tangent to a curve $(\gamma(t),e(t)) \in FM^{g'} $ at $ t=0 $ such that $ e(t) $ is a parallel translation along $ \gamma(t) $ on $ (M,g') $. Let $ \mathcal{V} $ be the vertical distribution tangent to every fiber of the canonical projection $ \pi:FM^{g'}\to M $. For each $ X \in T(FM^{g'}) $, let $ X=X^{H}+X^{V} $ be the unique decomposition with respect to $\mathcal{H}, \mathcal{V} $. 
	The inner product $ \tilde{g} $ along horizontal distribution is defined to $$  \tilde{g}(X^H,Y^H)= g(\pi_{*}(X) ,\pi_{*}(Y)),$$
	horizontal and vertical distributions are defined to be orthogonal to each other, and $ \tilde{g} $ along vertical distribution is induced by a canonical bi-invariant metric of $ O(n) $ via its right action on $ FM^{g'} $ in a standard way (c.f. \cite[Chapter II]{Kobayashi-Nomizu1963}).
	
	To be precise, let $ \omega:  T(FM^{g'}) \to \mathfrak{o}(n) $ be the connection form for $ T(FM^{g'})=\mathcal{V} \oplus \mathcal{H} $ such that $ \omega(X)=\omega(X^V)= A $, where $ A^*_{(p,e)}=X^V_{(p,e)} $, and $ A\in \mathfrak{o}(n) \mapsto  A^*\in \Gamma(FM^{g'}) $ is a homomorphism between Lie algebras defined by the right action $ R_{\exp t A} $.
	Let $ b $ be the inner product on $ \mathfrak{o}(n) $ defined by $$ b(a_1,a_2)=- \operatorname{trace} a_1a_2, $$ where the elements $ a_1,a_2  \in \mathfrak{o}(n) $ are identified with skew-symmetric matrices.
	Then, the lifting metric $ \tilde{g} $ of $ g $ on $ FM^{g'} $ is defined by
	\begin{equation}\label{dfn-FM-metric}
		\tilde{g}(X,Y) = g(\pi_{*}(X) ,\pi_{*}(Y)) + b(\omega(X),\omega(Y)).
	\end{equation}
\end{definition}

The lifting metric $ \tilde{g} $ on $ FM^{g'} $ (and hence $ \left(\alpha_{g',g}^{1/2}\right)^*\tilde{g} $ on $ FM^{g} $) in Definition \ref{defn-admissible-lifting-metric} satisfies the following properties.

\begin{lemma}\label{lem-FM-metric}
	~
	
	\begin{enumerate}
		\item By definition, the project map $ \pi : (FM^{g'},\tilde{g})  \to (M,g) $ is a Riemannian submersion.
		\item  If $ g'=g $, the lifting metric $ \tilde{g} $ of $ g $ on $ FM^{g'} $ is the canonical lifting metric $ \tilde{g}_{can} $ on $ FM^g $.
		\item By definition, the lifting metric $ \tilde{g} $ on $ FM^{g'} $ is $ O(n) $-invariant.
		\item If $ \gamma: M \to M $ is an isometry on both $ (M,g) $ and $ (M,g') $, then its differential 
		\begin{eqnarray*}
			\tilde{\gamma}: FM^{g'} &\to& FM^{g'},\\
			(p,e) &\mapsto& (\gamma(p), \gamma_{*}(e)),
		\end{eqnarray*}
		is an isometry on $ (FM^{g'},\tilde{g}) $.
		\item For any two couples of metrics $(g,g')$ and $(g_0,g_0')$ on $M$, the map
		$ \alpha^{1/2}_{g'_0,g'}: (FM^{g'},\tilde{g}) \to (FM^{g'_0},\tilde{g}_0) $ is isometric along the vertical distributions.
		\item If a sequence metrics $ g_i $ (resp. $ g'_{i} $) converges to $ g $ (resp. $ g' $) in the sense of $ C^{0} $-norm (resp. $ C^1 $-norm) on $ M $, 
		then the sequence of orthonormal frame bundle $ (FM^{g'_{i}},\tilde{g}_i) $ converges to $ (FM^{g'},\tilde{g}) $ in the $ C^0 $ topology.
	\end{enumerate}
\end{lemma}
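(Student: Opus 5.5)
Items (1)--(3) follow directly from Definition \ref{defn-admissible-lifting-metric}, so I will only check them briefly. For (1), $\tilde g$ makes $\mathcal H\perp\mathcal V$, and on $\mathcal H$ it equals $g(\pi_*X,\pi_*Y)$; hence $\pi_*:\mathcal H_{(p,e)}\to T_pM$ is an isometry and $\ker\pi_*=\mathcal V$, which is exactly the definition of a Riemannian submersion. For (2), when $g'=g$ the horizontal distribution is the Levi-Civita one of $g$, and the vertical part is $b(\omega(\cdot),\omega(\cdot))$. This is the canonical (Mok/Sasaki type) metric $\tilde g_{can}$. For (3), the right action $R_a$, $a\in O(n)$, preserves $\mathcal H$ because parallel transport commutes with $R_a$. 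It also satisfies $\pi\circ R_a=\pi$ and $R_a^*\omega=\operatorname{Ad}_{a^{-1}}\omega$. Since $b$ is $\operatorname{Ad}$-invariant, both terms of \eqref{dfn-FM-metric} are preserved.

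For (4), an isometry $\gamma$ of $g'$ maps $g'$-orthonormal frames to $g'$-orthonormal frames and maps $\nabla'$-parallel frames along curves to parallel frames along the image curves. So $\tilde\gamma$ preserves $\mathcal H$, and it commutes with $R_a$, so it preserves fundamental fields and $\omega$: $\tilde\gamma^*\omega=\omega$. On $\mathcal H$ we have $\pi\circ\tilde\gamma=\gamma\circ\pi$, and $\gamma$ is a $g$-isometry, so the horizontal term is preserved. The vertical term is preserved because $\omega$ is.

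For (5), the map $\alpha^{1/2}_{g_0',g'}$ is fiber preserving and $O(n)$-equivariant, as shown in Section \ref{sbsct-orth-frm-bundle}. Therefore it maps the fundamental field $A^*$ on $FM^{g'}$ to $A^*$ on $FM^{g_0'}$. On the vertical distributions both metrics equal $b(A,A)$, and by Lemma \ref{lem-FM-convergence}(1) the connection forms agree on vertical vectors. This gives isometry along $\mathcal V$.

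For (6), the main step, I pull everything back to a fixed bundle, say $FM^{g'}$, via $\alpha^{1/2}_{g'_i,g'}$. The pulled-back metric is
$$\tilde g_i^\sharp=\pi^*g_i+b\bigl((\alpha^{1/2}_{g'_i,g'})^*\omega_i',(\alpha^{1/2}_{g'_i,g'})^*\omega_i'\bigr).$$
Here I use the identification $\pi\circ\alpha^{1/2}=\pi$. By Lemma \ref{lem-FM-convergence}(2), the pulled-back connection forms converge in $C^0$ to $\omega'$, since $g_i'\to g'$ in $C^1$. The term $\pi^*g_i$ converges to $\pi^*g$ in $C^0$. Thus $\tilde g_i^\sharp\to\tilde g$ in $C^0$ as symmetric $2$-tensors on $FM^{g'}$, uniformly on compact sets.

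The main obstacle is whether $C^0$ convergence of the connection forms really holds uniformly. Lemma \ref{lem-FM-convergence}(2) is phrased pointwise through horizontal lifts, so I would recheck that argument in local coordinates: the Christoffel symbols converge in $C^0$, and $\alpha^{1/2}_{g_i',g'}$ together with its differential converge in $C^0$, on compact coordinate patches. That gives local uniform convergence, which is what is needed.
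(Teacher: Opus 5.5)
Your proposal is correct and follows the paper's proof essentially item by item: (1)--(3) come from the definition, (5) from the $O(n)$-equivariance and fiber-preservation of $\alpha^{1/2}_{g_0',g'}$ together with Lemma \ref{lem-FM-convergence}(1), and (6) from pulling $\tilde g_i$ back to $FM^{g'}$ via $\alpha^{1/2}_{g_i',g'}$ and invoking Lemma \ref{lem-FM-convergence}(2). The only small difference is in (4): you note directly that $\tilde\gamma$ commutes with the right $O(n)$-action and preserves $\mathcal H$, so $\tilde\gamma^*\omega=\omega$, which justifies the vertical part more directly than the paper's appeal to the bi-invariance of $b$.
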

\begin{proof}
	It is clear that (1), (2) and (3) follow from the definition of $ \tilde{g} $.
	
	For (4), 
	since $ \gamma $ is an isometry on $ (M,g') $, its tangent map $ \gamma_* $ preserves the parallel transport on $ (M,g') $. Hence, the differential of $ \tilde{\gamma} $ preserves the horizontal distribution of $ T(FM^{g'}) $. Since $ \tilde{\gamma} $ is an isomorphism from $ FM^{g'} $ to itself as principal $ O(n) $-bundle, it preserves the vertical distribution of $ T(FM^{g'}) $. Therefore,
	it suffices to verify that $ \tilde{\gamma}_* $ preserves $ \tilde{g} $ for both horizontal and vertical vectors respectively.
	
	For $ X,Y \in \mathcal{H} $, since $ \gamma $ acts isometrically on $ (M,g) $, and the inner product $ \tilde{g}(X,Y) $ is defined to the projection  $ g_{p}(\pi_{*}X,\pi_{*}Y) $ on $ (M,g) $, we have 
	\begin{eqnarray*}
		\tilde{g}_{(p,e)}(X,Y) 
		&=&g_{p}(\pi_{*}X,\pi_{*}Y) \\
		&=& g_{\gamma(p)}\left(\gamma_*\pi_{*}X,\gamma_*\pi_{*}Y\right) \\
		&=&
		\tilde{g}_{(\gamma(p),\gamma_{*}e)} ((\gamma_*\pi_{*}X)^{h},(\gamma_*\pi_{*}Y)^{h})\\
		&=& \tilde{g}_{\tilde{\gamma}(p,e)}(\tilde{\gamma}_*X,\tilde{\gamma}_*Y) ,
	\end{eqnarray*}
	where $(\gamma_*\pi_{*}X)^{h} $ is the horizontal lifting vector, i.e., $ (\gamma_*\pi_{*}X)^{h} \in \mathcal{H} $ such that $ \pi_{*}( (\gamma_*\pi_{*}X)^{h})$ $= \gamma_*\pi_{*}X $.

	For the vertical distribution $ \mathcal{V} $,
	since on each fiber the metric $ \tilde{g} $ is induced by a canonical bi-invariant metric $ b $ on $ O(n) $ and $ \gamma_* $ can be viewed as an element in $ O(n) $ acting on the left, $ \tilde{\gamma}_* $ preserves $ \tilde{g} $ along vertical distribution. 
	It can be verified formally as follows. For $ X,Y \in \mathcal{V}$,
	let $ A_1^* $ (resp. $ A_2^* $) be the vector field defined by $ \omega(X) $ (resp. $ \omega(Y) $) and $ B_1^* $ (resp. $ B_2^* $) be the vector field defined by $ \omega(\gamma_{*}X) $ (resp. $ \omega(\gamma_{*}Y) $). Since $ \gamma $ is an isometry on $ (M,g') $, there exists an element $ c \in O(n) $ such that $ B_1^*=c_*A_1^* $ and $ B_2^*=c_*A_2^* $. Because the metric $ \tilde{g} $ on each fiber is defined by a canonical bi-invariant metric $ b $ on $ O(n) $, it follows that 
	\begin{eqnarray*}
		\tilde{g}_{(p,e)}(X,Y) &=& b(\omega(X),\omega(Y))\\
		&=& b(\omega(\gamma_{*}X), \omega(\gamma_{*}Y))\\
		&=& \tilde{g}_{\tilde{\gamma}(p,e)}(\tilde{\gamma}_*X,\tilde{\gamma}_*Y).
	\end{eqnarray*}

	For (5), by Lemma \ref{lem-FM-convergence} (1) and the equation \eqref{dfn-FM-metric}, for any $ V, W \in \mathcal{V} $, we have 
	\begin{eqnarray*}
		\tilde{g}_0\left ( \left( \alpha_{g'_0,g'}^{1/2} \right)_*V , \left( \alpha_{g'_i,g'}^{1/2} \right)_*W \right) &=& b\left(\omega_{0}\left(\left( \alpha_{g'_0,g'}^{1/2} \right)_*V\right),\omega_{0}\left(\left( \alpha_{g'_0,g'}^{1/2} \right)_*W\right)  \right)\\ &=& b\left(\omega(V),\omega(W)  \right)= \tilde{g}(V,W) . 
	\end{eqnarray*}
	
	For (6), let us consider the metric $ \left( \alpha_{g'_i,g'}^{1/2} \right)^*(\tilde{g}_i) $ and $ \tilde{g} $ on $ FM^{g'} $. 
	By the equation \eqref{dfn-FM-metric}  and Lemma \ref{lem-FM-convergence} (2),
	it follows that $ \left( \alpha_{g'_i,g'}^{1/2} \right)^*(\tilde{g}_i) $ converges to $ \tilde{g} $ in the $ C^0 $-norm.
\end{proof}

Similar to the tangent bundle $ (TM,\bar{g}) $, the intrinsic distance $ d_b $ on each fiber of $ (FM^{g_\epsilon},\tilde{g}) $ differs from the restricted distance $ d_{\tilde{g}} $. The following lemma establishes the explicit expression of the restricted distance on a fiber.
\begin{lemma}\label{lem-FM-dist}
	For any $ p \in M $, the restricted distance on the fiber $ \pi^{-1}(p) $ is expressed as
	\begin{eqnarray*}
		d_{\tilde{g}}((p,e),(p,e')) 
		&=& \inf_{a\in \operatorname{Hol}_p(\nabla^\epsilon)} \left\{ \sqrt{L^2(a)+d_{b}^2((p,ae),(p,e'))} \right\},
	\end{eqnarray*}
	where $ L(a) $ is the infimum of lengths of piecewise smooth loops $ \gamma$ at $p$ along which the parallel transport is $a\in  \operatorname{Hol}_p(\nabla^{\epsilon}) $.
\end{lemma}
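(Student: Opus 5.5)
Our plan is to follow the argument of \cite[Proposition 3.12]{Solorzano2010} for Sasaki-type metrics on $TM$, adapted to the principal bundle $FM^{g_\epsilon}$ with the lifting metric $\tilde g$ of Definition \ref{defn-admissible-lifting-metric}. Since the restricted distance is the infimum of $\tilde g$-lengths of curves in $FM^{g_\epsilon}$ joining $(p,e)$ and $(p,e')$, we prove the two inequalities separately.

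\emph{Lower bound.} Let $c(t)=(\gamma(t),f(t))$, $t\in[0,1]$, be a piecewise smooth curve from $(p,e)$ to $(p,e')$. Define $e(t)$ as the $\nabla^\epsilon$-parallel transport of $e$ along $\gamma$, so that $(\gamma(t),e(t))$ is horizontal. Then write $f(t)=e(t)\cdot\sigma(t)$ with $\sigma(t)\in O(n)$ and $\sigma(0)=I$. By the standard formula for connection forms (cf. \cite[Chapter II]{Kobayashi-Nomizu1963}), $\omega(c'(t))=\sigma(t)^{-1}\sigma'(t)$. Hence the vertical part of $c'$ has $\tilde g$-norm $|\sigma'(t)|_b$, since $b$ is bi-invariant, and the horizontal part has norm $|\gamma'(t)|_g$, since $\pi$ is a Riemannian submersion (Lemma \ref{lem-FM-metric}(1)). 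By the Minkowski (or Cauchy--Schwarz) inequality for $\int\sqrt{x(t)^2+y(t)^2}\,dt$,
\[
L_{\tilde g}(c)\ge \sqrt{l_g(\gamma)^2+L_b(\sigma)^2}.
\]
At $t=1$ we have $e(1)=ae$ with $a=P^{\gamma}_1\in\operatorname{Hol}_p(\nabla^\epsilon)$, and $e'=ae\cdot\sigma(1)$. So $L_b(\sigma)$ bounds the $b$-length of a vertical path from $(p,ae)$ to $(p,e')$, giving $L_b(\sigma)\ge d_b((p,ae),(p,e'))$. Also $l_g(\gamma)\ge L(a)$ by definition of $L(a)$. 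Here $d_b$ is the fiber distance induced by $b$ through the right action, which is $O(n)$-invariant. Taking the infimum over curves gives $d_{\tilde g}\ge$ the right-hand side.

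\emph{Upper bound.} Fix $a\in\operatorname{Hol}_p(\nabla^\epsilon)$ and $\delta>0$. Choose a loop $\gamma$ with $P^\gamma_1=a$ and $l(\gamma)<L(a)+\delta$, and choose a $b$-geodesic $\sigma$ in $O(n)$ from $I$ to $\sigma_1$ with $e'=ae\cdot\sigma_1$, with length $d_b((p,ae),(p,e'))$. Reparametrize both $\gamma$ and $\sigma$ proportional to arclength on $[0,1]$ and set $c(t)=e(t)\cdot\sigma(t)$. The horizontal and vertical speeds are then constant, so equality holds in the Minkowski step: $L(c)=\sqrt{l(\gamma)^2+L_b(\sigma)^2}$. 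Letting $\delta\to0$ and taking the infimum over $a$ gives the reverse inequality.

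\emph{Main obstacle.} The delicate point is the vertical norm identity: the vertical component of $\tfrac{d}{dt}(e(t)\sigma(t))$ must equal the fundamental field of $\sigma^{-1}\sigma'$, and $b$ must measure it independently of the horizontal motion. This requires that right translation $R_{\sigma(t)}$ maps horizontal vectors to horizontal vectors. That holds because the $\nabla^\epsilon$-connection is $O(n)$-invariant; on $FM^{g}$ one uses the pullback via $\alpha^{1/2}_{g_\epsilon,g}$, which is also $O(n)$-invariant by Lemma \ref{lem-FM-convergence}(1). A minor additional point is identifying $d_b$ with the length metric of $b$ on the fiber, which follows from $O(n)$-equivariance of the right action.
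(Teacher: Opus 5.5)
Your proposal is correct and follows essentially the paper's route: the paper only says the proof is similar to \cite[Proposition 3.12]{Solorzano2010}, and your argument is exactly that adaptation to the principal $O(n)$-bundle. You trivialize along the $\nabla^\epsilon$-horizontal lift so that $|c'|^2_{\tilde g}=|\gamma'|^2_g+|\sigma'|^2_b$, use Minkowski for the lower bound, and use a constant-speed horizontal loop composed with a $b$-geodesic for the upper bound. One convention should be stated explicitly: since $(O(n),b)$ is disconnected, $d_b$ is $+\infty$ between its two components, so the corresponding $a$ simply drop out of the infimum.
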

The proof of this lemma is similar to that of \cite[Proposition 3.12]{Solorzano2010}.

Next, let us give a primitive description on the limit of orthonormal frame bundles.
Let $ (M_i,g_i) $ be a sequence of $ n $-manifolds and $ (M_i,g_i) \xrightarrow{GH} (X,d) $. Let $ g'_i $ be another sequence of Riemannian metric on $ M_i $. Let $ (FM^{g'_i}_i,\tilde{g}_i) $ be the orthonormal frame bundle with the lifting metric in Definition \ref{dfn-FM-metric}. By Gromov's precompactness principle, we have the following diagram 
$$\begin{CD}
	(FM^{g'_i}_i, \tilde{g}_i, O(n)) @>eqGH>> (Y, d_Y ,G)\\
	@VV\pi_i V @VV\pi_\infty V\\
	(M_i,g_i) @>GH>> (X,d),
\end{CD}$$
where $ (X,d), (Y, d_Y) $ are length metric spaces, and $ G $ is the limit group of $ O(n) $. Since every $\pi_i$ is a Riemannian submersion, its limit $ \pi_\infty $ is a submetry, i.e., for any $R$-ball $B_R(y)$ in $Y$, its image $\pi_\infty(B_R(y))=B_R(\pi_\infty(y))$, and for any $ p_i \in M_i$, $ p_i \to x\in X $, the fiber $ \pi_{i}^{-1}(p_i)$ Gromov-Hausdorff converges to $ \pi_\infty^{-1}(x)$, equipped with their restricted metrics.
For each $ p_i \in M_i $ the fiber $ \pi_{i}^{-1}(p_i) $ equipped with the intrinsic metric $ b_i $ is isometric to $ (O(n),b) $. Hence, there exists a Riemannian isometry $ \phi_{p_i}: (O(n),b) \to (\pi_{i}^{-1}(p_i),b_i) $ that is $ O(n) $-equivariant. 
By pulling back the restricted metric $ d_{\tilde{g}_i} $ via $ \phi_{p_i} $, we obtain a metric on $ O(n) $, 
still denoted $ d_{\tilde{g}_i} $. 
By Lemma \ref{lem-FM-dist}, the limit semi-metric $ d_{\tilde{\infty}} $ on $ O(n) $ has the following property.
\begin{lemma}\label{lem-O(n)-limit}
	For any $ e,e' \in O(n) $, the distance $ d_{\tilde{\infty}} (e,e') = 0 $ holds if and only if there exists $ a \in H_{0,x} $ such that $ e'=ae $, where $ H_{0,x}\subset O(n) $ is defined by \eqref{eq-H_0,x}.
\end{lemma}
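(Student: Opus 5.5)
The plan is to work entirely on the fiber, using Lemma \ref{lem-FM-dist} pulled back to $O(n)$ via the $O(n)$-equivariant isometries $\phi_{p_i}$. Under this identification, the restricted distance becomes
\[
d_{\tilde g_i}(e,e')=\inf_{a\in \operatorname{Hol}_{p_i}(\nabla^{i,\epsilon})}\sqrt{L^2(a)+d_b^2(ae,e')}.
\]
Since the holonomy group acts on frames on the left, it commutes with the right $O(n)$-action. Because $b$ is bi-invariant, left multiplication by $a$ is an isometry of $(O(n),b)$. I will prove the two directions separately, after passing to a subsequence so that $d_{\tilde g_i}\to d_{\tilde\infty}$ uniformly on the compact set $O(n)\times O(n)$. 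This uniform convergence is available because every $d_{\tilde g_i}$ is $1$-Lipschitz with respect to $d_b$ (take $a=\mathrm{id}$, $L=0$), so Arzel\`a--Ascoli applies.

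For the ``if'' direction, let $a\in H_{0,x}$ with $e'=ae$. By \eqref{eq-H_0,x} there are $a_i\in\operatorname{Hol}_{p_i}$ with $a_i\to a$ and $L(a_i)\to0$. Plugging $a_i$ into the infimum gives
\[
d_{\tilde g_i}(e,e')\le \sqrt{L^2(a_i)+d_b^2(a_ie,ae)}\to 0,
\]
since $d_b(a_ie,ae)=d_b(a_i,a)\to0$ by right-invariance of $b$. Hence $d_{\tilde\infty}(e,e')=0$.

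For the ``only if'' direction, suppose $d_{\tilde\infty}(e,e')=0$. Then $d_{\tilde g_i}(e,e')\to0$, so for each $i$ there is $a_i\in\operatorname{Hol}_{p_i}$ with
\[
L^2(a_i)+d_b^2(a_ie,e')\le d_{\tilde g_i}^2(e,e')+1/i\to0 .
\]
This gives $L(a_i)\to0$ and $d_b(a_ie,e')\to0$. By compactness of $O(n)$ we can pass to a subsequence with $a_i\to a$, so $L(a_i)\to 0$ and $a\in H_{0,x}$. Then $d_b(ae,e')=\lim d_b(a_ie,e')=0$, which gives $e'=ae$.

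The main obstacle is bookkeeping rather than analysis. The identifications $\phi_{p_i}$, and hence the embeddings $\operatorname{Hol}_{p_i}\subset O(n)$, must be fixed consistently along the chosen subsequence, and the sequences $a_i$ in the definition of $H_{0,x}$ must be taken along that same subsequence. With a different choice, $H_{0,x}$ changes only by conjugation, consistent with the remark in the introduction. I would state this normalization explicitly at the start.

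A further subtlety in the ``only if'' step is that $H_{0,x}$ is defined via limits of sequences along the full sequence. I would interpret \eqref{eq-H_0,x} as allowing subsequential limits, or else refine the subsequence at the outset so that both conventions coincide.
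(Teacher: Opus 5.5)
Your proof is correct and follows the paper's argument. In the ``if'' direction you plug the approximating holonomies $a_i$ into the formula of Lemma \ref{lem-FM-dist}, which is equivalent to the paper's horizontal-lift-plus-triangle-inequality bound. In the ``only if'' direction you take near-minimizing holonomies and extract a convergent subsequence in $O(n)$, exactly as the paper does. Your use of $1/i$-almost minimizers and your explicit treatment of the subsequence convention for $H_{0,x}$ tidy up two points the paper passes over silently.
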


\begin{proof}
	If there exists $ h \in H_{0,x} $ such that $ e'=he $, then there exists $ \operatorname{Hol}_{q_i}(\nabla^{i,\epsilon}) \backepsilon a_i \to h $ with $ L(a_i) \to 0 $. Since $ \phi_{q_i}(a_ie)= a_i\phi_{q_i}(e) $ is the parallel translation of $ \phi_{q_i}(e) $ along $ a_i:[0,1]\to M_i, a_i(0)=a_i(1)=q_i $ with respect to $ \nabla^{i,\epsilon} $ at $ t=1 $, the lifting curve $ \tilde{a}_i(t) $ of $ a_i(t) $ in $ FM_i^{g_{i,\epsilon}} $ satisfies $ \tilde{a}_i(1)=a_i\phi_{p_i}(e) $. Since the length of $ \tilde{a}_i(t) $ in $ (FM^{g_{i,\epsilon}}_i,\tilde{g}_i) $ is equal to the length $ L(a_i) $ of $ a_i $ in $ (M_i,g_i) $, 
	$$ d_{\tilde{g}_i}(e,a_ie)=d_{\tilde{g}_i}(\phi_{p_i}(e), \phi_{p_i}(a_ie)) = L(a_i). $$
	Since $  d_{\tilde{g}_i}(e,a_ie)\le L(a_i)  $, $$ d_{\tilde{\infty}}(e,he)\le \lim_{i\to\infty}\left( d_{\tilde{g}_i}(e,a_ie)+ d_{\tilde{g}_i}(he,a_ie) \right)= 0  .$$
	
	Conversely, let $ e' $ be the element in $ O(n) $ satisfying $ d_{\tilde{\infty}} (e,e') = 0 $. 
	By Lemma \ref{lem-FM-dist}, there exists $ a_i \in \operatorname{Hol}_{p_i}(\nabla^{\epsilon_i}) $ such that 
	\begin{eqnarray*}
		d_{\tilde{\infty}}(e,e') 
		&=& \lim_{i\to\infty} d_{\tilde{g}_i}(e,e') \\
		&=& \lim_{i\to\infty} \inf \left\{ \sqrt{L^2(a_i)+d_{b}^2(a_ie,e')} \right\}.
	\end{eqnarray*}
	Since $ d_{\tilde{g}_i}(e,e') \to 0 $ as i$\to$ $\infty$, $ L(a_i)\to 0 $  and $ d_{b}(a_ie,e')\to 0 $ as $ i\to\infty $.
	
	If $ a_i \to h  $, then $ h\in H_{0,x} $ by definition of $ H_{0,x} $. Since $ \lim_{i\to\infty}d_{b}(a_ie,e') = 0 $ and $ \lim_{i\to\infty}d_{b}(a_ie,he) = 0 $, by triangle inequality $ d_{b}(he,e')=0 $, i.e., $ e'=he $.
\end{proof}

Lemma \ref{lem-O(n)-limit} implies the following property directly.
\begin{lemma}\label{lem-FM-fiber-limit}
	The limit fiber $\pi_\infty^{-1}(x)$ of $ \pi_{i}^{-1}(p_i) $ is homeomorphic to $O(n)/H_x$ by a $1$-Lipschitz $O(n)$-equivariant homeomorphism from  $(O(n)/H_x,d_b,O(n)) $ with the intrinsic metric $ d_b $ to $ (\pi_\infty^{-1}(x),d_Y,O(n)) $ with the restricted metric from $(Y,d_Y)$.
\end{lemma}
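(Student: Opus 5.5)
We identify the fiber $\pi_i^{-1}(p_i)$ with $O(n)$ via the $O(n)$-equivariant isometry $\phi_{p_i}:(O(n),b)\to(\pi_i^{-1}(p_i),b_i)$. We then realize the limit fiber $\pi_\infty^{-1}(x)$ as the quotient metric space of $(O(n),d_{\tilde\infty})$, where $d_{\tilde\infty}$ is the limit semi-metric of the pulled-back restricted distances $d_{\tilde g_i}$. Here $H_x$ is the infinitesimal holonomy group $H_{\infty,x}$, the closure of $H_{0,x}$.

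The first step is to obtain a limit semi-metric at all. The metrics $d_{\tilde g_i}$ on $O(n)$ satisfy $d_{\tilde g_i}\le d_b$, since a vertical path has the same length in the fiber. They are therefore uniformly $1$-Lipschitz with respect to $d_b$ on the compact space $O(n)$, and Arzel\`a--Ascoli gives a subsequence converging uniformly to a semi-metric $d_{\tilde\infty}\le d_b$. Standard arguments then show that $(O(n),d_{\tilde\infty})/\!\sim$ is the Gromov--Hausdorff limit of $(\pi_i^{-1}(p_i),d_{\tilde g_i})$. By the submetry property stated above, this limit is $\pi_\infty^{-1}(x)$ with the restricted metric from $Y$. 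The right $O(n)$-action is isometric for every $d_{\tilde g_i}$, because $\tilde g_i$ is $O(n)$-invariant (Lemma \ref{lem-FM-metric}(3)) and $\phi_{p_i}$ is equivariant. Hence $d_{\tilde\infty}$ is right-invariant, and the induced map is $O(n)$-equivariant and consistent with the limit action in the equivariant convergence.

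The second step identifies the zero set of $d_{\tilde\infty}$. By Lemma \ref{lem-O(n)-limit}, $d_{\tilde\infty}(e,e')=0$ if and only if $e'=ae$ for some $a\in H_{0,x}$. Since $d_{\tilde\infty}$ is continuous (it is bounded by $d_b$), its zero set is closed. Hence the relation $e\sim e'$ is exactly membership in a common left coset $H_{0,x}e$, and it is closed under limits. I need $H_{0,x}$ to be a closed subgroup, so that it equals $H_x$.

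Closedness comes from a diagonal argument on the sequences $a_i$. For the group property, given $a_i\to h$ and $a_i'\to h'$ with $L(a_i),L(a_i')\to0$, the concatenated loops give $a_ia_i'\to hh'$ with $L\le L(a_i)+L(a_i')\to0$. Inverses follow by reversing loops. Thus $H_{0,x}=H_x$, and the classes are the cosets $H_xe$. The notation $O(n)/H_x$ means this coset space, the left quotient compatible with the right action.

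The third step is the map $\Phi:(H_x\backslash O(n),\bar d_b)\to(\pi_\infty^{-1}(x),d_Y)$, $[e]\mapsto[e]$, where $\bar d_b$ is the quotient of the bi-invariant metric. $\Phi$ is well defined and bijective by the second step. It is $1$-Lipschitz because $d_{\tilde\infty}(he,h'e')\le d_b(he,h'e')$ for all $h,h'\in H_x$, and $d_{\tilde\infty}(he,e)=0$. The domain is compact and the target Hausdorff, so a continuous bijection is a homeomorphism. Equivariance was checked in the first step.

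I expect the main obstacle to be the first step: justifying that the fiberwise limit of the restricted metrics really is the fiber of the limit submetry. This means ensuring that the Gromov--Hausdorff approximations $\phi_{p_i}$ match the equivariant GH approximations of $FM_i\to Y$, and not merely an abstract limit. I would handle it by composing $\phi_{p_i}$ with the given equivariant approximations and using uniform Lipschitz control.
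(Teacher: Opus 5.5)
Your proposal is correct and follows essentially the same route as the paper. The paper derives the lemma directly from Lemma \ref{lem-O(n)-limit}: the zero set of the limit semi-metric consists of the left $H_{0,x}$-cosets, and the bound $d_{\tilde g_i}\le d_b$ gives the $1$-Lipschitz map. You supply the details the paper leaves implicit: Arzel\`a--Ascoli, the fact that $H_{0,x}$ is already a closed subgroup (so it equals $H_{\infty,x}$), compact-to-Hausdorff for the homeomorphism, and the identification of the fiber limit with the fiber of the limit submetry.
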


\section{The Ricci curvature of the new metric on orthonormal frame bundle}\label{sct-4}
This section is devoted to prove the following theorem.

\begin{theorem}\label{thm-FM-Ric-bounded}
	Let $ (M,g) $ be an $ n $-manifold with $ |\operatorname{Ric}_g| \le n-1 $ (resp. $ \operatorname{Ric}_g \ge -(n-1) $). Let $ g_\epsilon $ be the nearby metric of $ g $ in Proposition \ref{prop-nearby-metric}. If the weak $ C^{0,\alpha} $-harmonic radius at $ p\in M $ is $ \ge r_0 > 0 $, then the Ricci curvature of $ (FM^{g_\epsilon},\tilde{g}) $ at any point $ (p,e)\in \pi^{-1}(p) $ is  bounded two-sidedly by $ C(n, \epsilon,r_0) $ (resp. lower bounded by $ -C(n, \epsilon,r_0)  $).
\end{theorem}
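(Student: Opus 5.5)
The plan is to apply O'Neill's formulas to the Riemannian submersion $\pi:(FM^{g_\epsilon},\tilde g)\to(M,g)$ from Lemma \ref{lem-FM-metric}(1). The fibers are totally geodesic and isometric to $(O(n),b)$, and the integrability tensor is given by the curvature form of $\nabla^\epsilon$. The one subtlety is that the horizontal distribution comes from $\nabla^\epsilon$, while the horizontal metric is $g$, not $g_\epsilon$. The standard Kaluza--Klein/connection-metric computation still applies verbatim to a metric of the form \eqref{dfn-FM-metric}: a base metric $g$, an arbitrary principal connection $\omega$ (here from $\nabla^\epsilon$), and a bi-invariant fiber metric $b$. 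What changes is the list of quantities entering the formulas. They are the curvature of $g$ (for horizontal Ricci), the curvature form $\Omega^\epsilon=d\omega+\frac12[\omega,\omega]$, which is $R^\epsilon$ viewed as an $\mathfrak o(n)$-valued 2-form, and the covariant derivative of $\Omega^\epsilon$ taken with respect to the Levi-Civita connection of $g$ (not $g_\epsilon$) coupled with the adjoint connection induced by $\omega$.

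First I will check that the fibers are totally geodesic ($T=0$). This follows because the fiber metric is right-invariant via the $O(n)$-action, the action is isometric by Lemma \ref{lem-FM-metric}(3), and the vertical metric is bi-invariant, so fundamental fields $A^*$ are Killing fields of constant length. Next, the O'Neill $A$-tensor is $A_XY=\frac12\mathcal V[X,Y]=-\frac12(\Omega^\epsilon(X,Y))^*$ for horizontal lifts $X,Y$. The standard formulas then give, for horizontal unit $X$ and vertical $U,V$:
\[
\operatorname{Ric}(X,X)=\operatorname{Ric}_g(\pi_*X,\pi_*X)-2\sum_j|A_XE_j|^2,\qquad
\operatorname{Ric}(U,V)=\operatorname{Ric}_b(U,V)+\sum_j\tilde g(A_{E_j}U,A_{E_j}V),
\]
\[
\operatorname{Ric}(X,U)=\tilde g\bigl((\check\delta A)X,U\bigr).
\]
Here $\check\delta A=-\sum_j(\tilde\nabla_{E_j}A)_{E_j}$, and it is expressed through the divergence $\delta^g\Omega^\epsilon$ computed with $\nabla^g$ on $2$-forms together with $\omega$ on the $\mathfrak o(n)$-factor.

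Then I will bound each term at $(p,e)$. The term $\operatorname{Ric}_b$ is a constant depending only on $n$, and $|\operatorname{Ric}_g|\le n-1$ by hypothesis. In the lower-bound case only $\operatorname{Ric}_g\ge-(n-1)$ is available, but the $A$-terms have a definite sign or are bounded, so a lower bound survives. Since $e^{-\epsilon}g\le g_\epsilon\le e^\epsilon g$, norms with respect to $g$ and $g_\epsilon$ are comparable. Proposition \ref{prop-nearby-metric} with $r_p\ge r_0$ then gives $|A|\le C|R^\epsilon|\le C(n,\epsilon)r_0^{-2}$, and this controls the diagonal terms by $C r_0^{-4}$. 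For the mixed term I will write $\nabla^g\Omega^\epsilon=\nabla^\epsilon\Omega^\epsilon+(\nabla^g-\nabla^\epsilon)*\Omega^\epsilon$. Proposition \ref{prop-nearby-metric} gives $|\nabla^\epsilon R^\epsilon|\le C r_0^{-3}$ and $|\nabla-\nabla^\epsilon|\le C r_0^{-1}$, so $|\check\delta A|\le C(n,\epsilon)r_0^{-3}$. Polarization and combining the pieces then give $|\operatorname{Ric}_{\tilde g}|\le C(n,\epsilon,r_0)$ on $\pi^{-1}(p)$.

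I expect the main obstacle to be the mixed term $\operatorname{Ric}(X,U)$. It requires one derivative of the connection's curvature, and this is exactly why the canonical lift fails: for $g_\epsilon=g$ it would need $\nabla R$, which is uncontrolled by a Ricci bound. The key is to handle the mismatch between the base metric $g$ and the connection $\nabla^\epsilon$ carefully in the O'Neill identity. In particular, one must verify that $g$ enters only through its Ricci curvature and its Levi-Civita connection (the latter differing from $\nabla^\epsilon$ by a controlled tensor), and never through $\nabla R_g$. I would first redo the computation in a local $\tilde g$-orthonormal frame built from $\nabla^g$-geodesic frames on the base, horizontally lifted via $\omega$, together with $b$-orthonormal fundamental fields.
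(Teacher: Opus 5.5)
Your proposal is correct and follows essentially the paper's route. The paper also reduces to pointwise bounds from Proposition \ref{prop-nearby-metric} at $r_p\ge r_0$ and applies O'Neill's formulas to the submersion $\pi:(FM^{g_\epsilon},\tilde g)\to(M,g)$ with totally geodesic fibers, reading off the $A$-tensor from $R_\epsilon$ and controlling the mixed term through the explicit formula for $(\tilde\nabla_Z A)_X Y$ in Lemma \ref{lem-FM-basic-prop}(3), namely $\nabla R_\epsilon$ plus $(\nabla-\nabla^\epsilon)*R_\epsilon$, which is the same mixed $\nabla^g$/$\nabla^\epsilon$ derivative of $\Omega^\epsilon$ that you describe.
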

By Proposition \ref{prop-nearby-metric}, it suffices to show that how the Ricci curvature of $ (FM^{g_\epsilon},\tilde{g}) $ depends on the metrics $ g_\epsilon, g $.

\begin{theorem}\label{thm-Ric-curvature-bounded}
	Given $ \epsilon, \delta, k, K>0 $. Let $ (M,g) $ be an $ n $-manifold with bounded Ricci curvature $ |\operatorname{Ric}_{g} |\le n-1 $ (resp. $ \operatorname{Ric}_{g} \ge -(n-1) $).  If $ g_{\epsilon} $ is another Riemannian metric on $ M $ satisfying
	\begin{enumerate}
		\item $ |g-g_{\epsilon}|_g \le \epsilon $,
		\item	$ |\nabla - \nabla^{\epsilon}|_g \le  \delta $,
		\item $ |\sec_{g_{\epsilon}} | \le k $,
		\item $ | \nabla^{\epsilon} R^{\epsilon} | \le K $, 
	\end{enumerate}
	then the Ricci curvature of $ (FM^{g_\epsilon},\tilde{g}) $ satisfies $ |\operatorname{Ric}_{\tilde{g}} |\leq C(n, \epsilon,\delta,K) $ (resp. $ \operatorname{Ric}_{\tilde{g}} \ge -C(n, \epsilon,\delta,K) $), where $C(n, \epsilon,\delta,k,K)>0 $ is a constant depending only on $ n,\epsilon,\delta,k,K $.
\end{theorem}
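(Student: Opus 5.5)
The plan is to compute $\operatorname{Ric}_{\tilde g}$ with O'Neill's formulas \cite{O'Neill1966} for the submersion $\pi:(FM^{g_\epsilon},\tilde g)\to (M,g)$, which is Riemannian by Lemma \ref{lem-FM-metric}(1), and to bound each term in the resulting splitting.

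\emph{Step 1: the integrability tensor $A$ and the fibers.} The horizontal distribution $\mathcal H$ comes from the Levi-Civita connection $\nabla^\epsilon$ of $g_\epsilon$, not from $g$. Hence $A_XY=\frac12[X,Y]^V=-\frac12\big(\Omega^\epsilon(X,Y)\big)^*$ for horizontal $X,Y$, where $\Omega^\epsilon$ is the curvature form of $\nabla^\epsilon$. At $(p,e)$, $\Omega^\epsilon(X,Y)$ is the skew matrix of $R^\epsilon(\pi_*X,\pi_*Y)$ written in the $g_\epsilon$-orthonormal frame $e$.

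\emph{Step 2: the fibers and the remaining O'Neill terms.} The vertical metric is the fixed bi-invariant metric $b$, transported by the right $O(n)$-action, and $\tilde g$ is $O(n)$-invariant. For such connection metrics on principal bundles the fibers are totally geodesic, so $T\equiv 0$; I would verify this by checking that $\tilde g(\nabla_{A^*}B^*,X)=0$ for fundamental fields $A^*,B^*$ and horizontal $X$ via Koszul. Each fiber is then $(O(n),b)$ with constant Ricci curvature $-\frac14$Killing. Assuming $T=0$, the Ricci tensor splits into three parts:
\begin{gather*}
\operatorname{Ric}_{\tilde g}(U,V)=\operatorname{Ric}_{b}(U,V)+\textstyle\sum_i\tilde g(A_{X_i}U,A_{X_i}V),\\
\operatorname{Ric}_{\tilde g}(X,Y)=\operatorname{Ric}_g(\pi_*X,\pi_*Y)-2\textstyle\sum_i\tilde g(A_XX_i,A_YX_i),\\
\operatorname{Ric}_{\tilde g}(X,U)=\textstyle\sum_i\tilde g\big((\nabla_{X_i}A)_{X_i}X,U\big),
\end{gather*}
with $\{X_i\}$ a $\tilde g$-orthonormal horizontal frame, i.e.\ horizontal lifts of a $g$-orthonormal frame. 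By condition (1), $g$- and $g_\epsilon$-norms are comparable. Hence $|A|\le C(n,\epsilon)|R^\epsilon|_{g_\epsilon}\le C(n,\epsilon)k$ by (3). The first two formulas therefore give $|\operatorname{Ric}_{\tilde g}(U,V)|\le C(n)+C(n,\epsilon,k)$ and $\operatorname{Ric}_{\tilde g}(X,Y)=\operatorname{Ric}_g+O(C(n,\epsilon,k))$. This yields the two-sided bound when $|\operatorname{Ric}_g|\le n-1$, and a lower bound when only $\operatorname{Ric}_g\ge-(n-1)$. In the latter case the other blocks are two-sided bounded, so a lower bound for the full quadratic form follows by Cauchy--Schwarz on the off-diagonal block.

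\emph{Step 3 (main obstacle): the mixed term $(\nabla_{X_i}A)_{X_i}X$.} Here the Levi-Civita connection of $\tilde g$ is tied to $\nabla^g$ on horizontal lifts, since $\mathcal H(\nabla^{\tilde g}_{X}Y)=(\nabla^g_{\pi_*X}\pi_*Y)^h$. The natural covariant derivative of $\Omega^\epsilon$, by contrast, is the horizontal one defined via $\nabla^\epsilon$, which is why both $\delta$ and $K$ appear in the constant. I would expand $(\nabla_{X_i}A)_{X_i}X$ in three parts:
\begin{enumerate}
\item the horizontal derivative of the equivariant function $\Omega^\epsilon$ along $X_i$, which equals $(\nabla^\epsilon R^\epsilon)$ written in the frame $e$ and is bounded by $K$ via (4);
\item correction terms $(\nabla^g-\nabla^\epsilon)\ast R^\epsilon$, bounded by $C\delta k$ via (2) and (3);
\item terms of the form $A\ast A$ or $A\ast(\text{vertical part of }\nabla_{X_i}X_i)$, coming from the vertical components of $\nabla^{\tilde g}$ of horizontal fields, which are quadratic in $|A|$ and hence bounded by $C k^2$.
\end{enumerate}
The vertical directions enter only through $\tilde g(\cdot,U)$ with the constant metric $b$, so no derivatives of $b$ appear. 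Combining Steps 2 and 3 gives $|\operatorname{Ric}_{\tilde g}|\le C(n,\epsilon,\delta,k,K)$ in the two-sided case, and the lower bound in the other case. Theorem \ref{thm-FM-Ric-bounded} then follows from Proposition \ref{prop-nearby-metric}, which gives $\delta,k,K$ as functions of $\epsilon$ and $r_0$.
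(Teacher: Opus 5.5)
Your proposal is correct and follows essentially the same route as the paper: O'Neill's formulas for the Riemannian submersion onto $(M,g)$, the $A$-tensor given by $R^\epsilon$ in the frame $e$ (bounded via (1) and (3)), totally geodesic fibers isometric to $(O(n),b)$, and the mixed term written as $\nabla^\epsilon R^\epsilon$ plus $(\nabla-\nabla^\epsilon)\ast R^\epsilon$ corrections, exactly as in Lemma~\ref{lem-FM-basic-prop}(3). The differences are cosmetic. You use the block (Besse-type) Ricci formulas with $T=0$ instead of expanding $\widetilde{\operatorname{Ric}}(X,X)$ term by term, and the $A\ast A$ terms you anticipate in Step 3 actually vanish for type reasons: $A_{\tilde\nabla_{X_i}X_i}$ only sees the horizontal part, $A_{X_i}$ of a vertical vector is horizontal, and $\tilde\nabla_{X_i}U$ is horizontal for basic $X_i$ and fundamental $U$. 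So including them only makes your bound more generous.
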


In order to calculate the Ricci curvature of $ (FM^{g_\epsilon},\tilde{g}) $, we first calculate the Levi-Civita covariant derivative on $ (FM^{g_\epsilon},\tilde{g}) $. 

An orthonormal frame of vertical distribution on $ (FM^{g_\epsilon},\tilde{g}) $ is chosen as follows.
Let $ \{e^{\lambda\mu} \}_{\lambda<\mu}$ be a basis on $ o(n) $ defined by $ e^{\lambda\mu}=(e^{\lambda\mu}_{i,j})$ with $ e^{\lambda\mu}_{\lambda,\mu}= -e^{\lambda\mu}_{\mu, \lambda} =1 $ and others $ e^{\lambda\mu}_{i,j}=0 $. 
The canonical vertical vector fields, $ T_{\lambda\mu} $, $ \lambda<\mu $, on $ FM^{g'} $ is defined as the vector field induced by the right action $ R_{\exp t e^{\lambda\mu}} $. By the definition of $ \tilde{g} $, it is easy to check that $ T_{\lambda\mu}$,  $ \lambda <\mu $ are orthonormal vertical vector fields on $ (FM^{g_\epsilon},\tilde{g}) $. 
Then, the Levi-Civita connection $ \tilde{\nabla} $ of $ (FM^{g_\epsilon},\tilde{g}) $ satisfies the following properties.

\begin{lemma}\label{lem-FM-basic-prop}
	Let $ X ,Y, Z $ be horizontal orthonormal vector fields, and $ T_{\lambda\mu} , T_{\nu\omega} $ be the canonical vertical vector fields on $ FM^{g_\epsilon} $ defined above. Let $ (p,e)=(p,e_{1}\cdots,e_{n}) $ be a point in $ FM^{g_\epsilon} $. Let $ R_\epsilon $ be the curvature tensor of $ (M,g_\epsilon) $, and $ \nabla^\epsilon, \nabla$ be the Levi-Civita connection of $ (M,g_\epsilon), (M,g) $ respectively. Then the followings hold.
	\begin{enumerate}
		\item $ \tilde{g}_{(p,e)}(\tilde{\nabla}_X Y,T_{\lambda\mu})=\frac{1}{\sqrt{2}} R_\epsilon(\pi_{*}(X),\pi_{*}(Y),e_\lambda,e_\mu ) $, where $ e_{\lambda} $ is the $ \lambda $-th element of the frame $ e=(e_{1}\cdots,e_{n}) $.
		\item  For $ \forall p \in M $, the fiber $ \pi^{-1}(p)$ is totally geodesic in $ (FM^{g_\epsilon},\tilde{g}) $.
		\item The A-tensor on $ (FM^{g_\epsilon},\tilde{g}) $ defined by $$A_{U_1} U_2= \left( \tilde{\nabla}_{U_1^H}U_2^H \right)^V+\left( \tilde{\nabla}_{U_1^H}U_2^V \right)^H, \qquad \forall U_1, U_2 \in \Gamma(TFM^{g_\epsilon}) ,$$
		satisfies the following two equations along the horizontal vector field and vertical vector field respectively.
		{\allowdisplaybreaks
			\begin{align}
				\tilde{g}_{(p,e)}\left( (\tilde{\nabla}_Z A)_X Y,T_{\lambda\mu} \right) 
				&= \frac{1}{\sqrt{2}}\left\{ \left( \nabla R_\epsilon \right)(\pi_{*}Z,\pi_{*}X,\pi_{*}Y,e_{\lambda},e_{\mu})\right. \tag*{}\\
				&\quad
				+ R_\epsilon \left(\pi_{*}X,\pi_{*}Y,(\nabla-\nabla^\epsilon)\left( \pi_{*}Z,e_{\lambda}  \right),e_{\mu}\right) \label{equ-A-tensor-1}\\
				&\left.\quad
				+ R_\epsilon \left(\pi_{*}X,\pi_{*}Y,e_{\lambda},(\nabla-\nabla^\epsilon)( \pi_{*}Z,e_{\mu
				}  )\right) \right\} \tag*{},\\
				\tilde{g}\left( (\tilde{\nabla}_{T_{\lambda\mu}} A)_X X,T_{\nu\omega} \right) &= 0.\label{equ-A-tensor-2}
		\end{align}}
	\end{enumerate}
\end{lemma}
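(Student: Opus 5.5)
The plan is to work on $FM^{g_\epsilon}$ with the horizontal lifts $X,Y,Z$ of vector fields on $M$ and the fundamental vertical fields $T_{\lambda\mu}$. I will rely on three standard facts: the Lie brackets $[X,T_{\lambda\mu}]=0$ (horizontal lifts are right-invariant and $T_{\lambda\mu}$ is generated by the right action), $[T_{\lambda\mu},T_{\nu\omega}]=$ the fundamental field of $[e^{\lambda\mu},e^{\nu\omega}]$, and $[X,Y]^V=-(\Omega(X,Y))^*$. Here $\Omega$ is the curvature form of $\nabla^\epsilon$, so at $(p,e)$ its matrix entries are $\Omega(X,Y)_{\lambda\mu}=R_\epsilon(\pi_*X,\pi_*Y,e_\lambda,e_\mu)$, up to the sign convention. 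I will then compute every covariant derivative by the Koszul formula. Two features of $\tilde g$ make this tractable. Since $\pi$ is a Riemannian submersion (Lemma \ref{lem-FM-metric}(1)), $\tilde g(X,Y)=g(\pi_*X,\pi_*Y)$ is constant along fibres. Since $\tilde g$ on vertical vectors is the bi-invariant metric $b$, the products $\tilde g(T_{\lambda\mu},T_{\nu\omega})$ are constant. With $b(a_1,a_2)=-\operatorname{trace}a_1a_2$ one gets $|e^{\lambda\mu}|_b^2=2$, so the normalized vertical fields are $T_{\lambda\mu}/\sqrt2$; this is where the factor $\frac1{\sqrt2}$ comes from.

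For (1), the Koszul formula with $X,Y$ horizontal and $T$ vertical leaves only the bracket terms. These are $2\tilde g(\tilde\nabla_XY,T)=\tilde g([X,Y],T)-\tilde g([Y,T],X)+\tilde g([T,X],Y)+T\tilde g(X,Y)$, and the last three terms vanish by the facts above. So $\tilde g(\tilde\nabla_XY,T_{\lambda\mu})=\frac12 b(\omega([X,Y]),e^{\lambda\mu})$, and inserting the curvature form gives $R_\epsilon(\pi_*X,\pi_*Y,e_\lambda,e_\mu)$ times a normalization constant. I will fix that constant and the sign against the stated $\frac1{\sqrt2}$. For (2), the second fundamental form of a fibre pairs $\tilde\nabla_{T}T'$ with horizontal $X$. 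The Koszul formula reduces this to $\tilde g([T,X],T')$ and similar terms plus $X\tilde g(T,T')$, and all of these vanish, so the fibres are totally geodesic.

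For (3), I would first use (1) and the submersion relation $(\tilde\nabla_XY)^H=(\nabla_{\pi_*X}\pi_*Y)^h$, with $\nabla$ the Levi-Civita connection of $g$, not of $g_\epsilon$, since the horizontal metric is $g$. From these I write $A_XY=\frac12[X,Y]^V$ explicitly as $\frac1{2}\sum_{\lambda<\mu}R_\epsilon(\pi_*X,\pi_*Y,e_\lambda,e_\mu)T_{\lambda\mu}$, with the normalization fixed as in (1). Then
\[
(\tilde\nabla_ZA)_XY=\tilde\nabla_Z(A_XY)-A_{\tilde\nabla_ZX}Y-A_X\tilde\nabla_ZY .
\]
The coefficient function $R_\epsilon(\pi_*X,\pi_*Y,e_\lambda,e_\mu)$ is differentiated along $Z$. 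Along $Z$ the frame $e$ is $\nabla^\epsilon$-parallel, because $Z$ is horizontal for $\nabla^\epsilon$. However, $\pi_*X$ and $\pi_*Y$ get differentiated by $\nabla$, which cancels against the two $A$-terms, whose horizontal parts are lifts of $\nabla$-derivatives. Rewriting the derivative of the coefficient as $(\nabla R_\epsilon)(\pi_*Z,\dots)$, with $\nabla$ the Levi-Civita connection of $g$, leaves correction terms in the slots $e_\lambda,e_\mu$. These slots were transported by $\nabla^\epsilon$ rather than $\nabla$, and that produces exactly the $(\nabla-\nabla^\epsilon)(\pi_*Z,e_\lambda)$ terms of \eqref{equ-A-tensor-1}.

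It remains to check that $\tilde g(\tilde\nabla_Z T_{\lambda\mu},T_{\nu\omega})=0$ contributes nothing. This holds because it equals $\tilde g(\tilde\nabla_{T}Z,T')=-\tilde g(Z,\tilde\nabla_TT')=0$ by (2). For \eqref{equ-A-tensor-2}, $A_XX=0$ by skew-symmetry. Hence $(\tilde\nabla_TA)_XX=-2A_{\tilde\nabla_TX}X$, after noting that $A$ applied to a vertical first slot vanishes. Here $\tilde\nabla_TX=\tilde\nabla_XT$ is horizontal, and it equals $-A_X T$. Its pairing via $A_{(\cdot)}X$ with a vertical vector is $\tilde g(A_{A_XT}X,T')$, and I would show this vanishes by the skew-symmetry identities of O'Neill's $A$-tensor. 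I expect the main obstacle to be the bookkeeping in \eqref{equ-A-tensor-1}, that is, getting the two connections correct and the constant normalization consistent. A secondary obstacle is making the final step of \eqref{equ-A-tensor-2} rigorous.
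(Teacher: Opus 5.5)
Your treatment of (1), (2) and \eqref{equ-A-tensor-1} follows the paper's route. The vertical part of $\tilde\nabla_XY$ is $\frac12[X,Y]^V$; you compute its value from the curvature form directly, where the paper quotes Kowalski--Sekizawa. The fibres are totally geodesic by Koszul, since $[X,T_{\lambda\mu}]=0$. In \eqref{equ-A-tensor-1}, the frame is $\nabla^\epsilon$-parallel along the integral curve of $\pi_*Z$, and the $\nabla$-derivatives of $\pi_*X,\pi_*Y$ cancel the two $A$-terms. The leftover $e_\lambda,e_\mu$ slots give the $(\nabla-\nabla^\epsilon)$ terms, and $\tilde g(A_XY,\tilde\nabla_ZT_{\lambda\mu})=0$ by (2). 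Your reading of the factor $\frac1{\sqrt2}$ as pairing with the unit fields $T_{\lambda\mu}/\sqrt2$ is the self-consistent one, since $b(e^{\lambda\mu},e^{\lambda\mu})=2$.

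The argument for \eqref{equ-A-tensor-2}, however, is wrong. Put $W=\tilde\nabla_{T_{\lambda\mu}}X$, which is horizontal, as you note. Then
\[
(\tilde\nabla_{T_{\lambda\mu}}A)_XX=\tilde\nabla_{T_{\lambda\mu}}(A_XX)-A_WX-A_XW .
\]
Because $A$ is skew on pairs of horizontal vectors, $A_XW=-A_WX$. So the last two terms cancel; they do not combine to $-2A_WX$. Together with $A_XX\equiv 0$, this already gives \eqref{equ-A-tensor-2}, and it is exactly the paper's one-line proof.

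Your version also points you at a false target. Using $A_WX=-A_XW$ and the skew-adjointness of $A_X$, one gets $\tilde g(A_{A_XT}X,T')=\tilde g(A_XT,A_XT')$. For $T'=T$ this is $|A_XT|^2$, which up to normalization equals $\frac12\sum_i R_\epsilon(\pi_*X,\pi_*E_i,e_\lambda,e_\mu)^2$ for a horizontal orthonormal frame $E_i$. That is precisely the term in the paper's computation of $\tilde R(X^H,T_{\lambda\mu},T_{\nu\omega},X^H)$, and it is nonzero whenever $R_\epsilon\neq 0$. So no skew-symmetry identity can make it vanish. The fix is to use the antisymmetry of $A$ in its two horizontal arguments one line earlier.

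A minor point: with the paper's definition $A_XT=(\tilde\nabla_XT)^H$, you get $\tilde\nabla_TX=+A_XT$, not $-A_XT$.
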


\begin{proof}
	(1) Since the metric $ \tilde{g} $ and the canonical lifting metric $ \tilde{g}_{can,\epsilon} $ on $ FM^{g_\epsilon} $ are defined by the same connection form, the horizontal vector fields are the same.  Because the natural projection is a Riemannian submersion, by O'Neill's \cite[Lemma 2]{O'Neill1966} we have $$ 2(\tilde{\nabla}_X Y)^V=[X,Y]^V=2(\tilde{\nabla}^{can,\epsilon}_X Y)^V ,$$ where $ \tilde{\nabla}^{can,\epsilon} $ is the Levi-Civita connection of $ \tilde{g}_{can,\epsilon} $.
	Since the metric $ \tilde{g} $ along vertical distribution coincides with $ \tilde{g}_{can,\epsilon} $ along vertical distribution, $$ \tilde{g}([X,Y]^V,T_{\lambda\mu})= \tilde{g}_{can,\epsilon}([X,Y]^V,T_{\lambda\mu}) .$$
	At the same time, by \cite[Proposition 2.3]{Kowalski-Sekizawa2008} the Levi-Civita connection of $ (FM^{g_\epsilon},\tilde{g}_{can,\epsilon}) $ satisfies  
	$$ \left( \tilde{g}_{can,\epsilon}\right)_{(p,e)}(\tilde{\nabla}^{can,\epsilon}_X Y,T_{\lambda\mu})=\frac{1}{\sqrt{2}} R_\epsilon(\pi_{*}(X),\pi_{*}(Y),e_\lambda,e_\mu ). $$
	Hence (1) holds.
	
	(2) Let $ \operatorname{\textrm{II}} $ be the second fundamental form of fiber $ \pi^{-1}(p) $.
	Since $ X $ is $ O(n) $-invariant, the Lie bracket on $ FM^{g'} $ satisfies $ [X,T_{\lambda\mu}]=0 $. By Koszul's formula and the fact that $ [T_{\lambda\mu},T_{\nu\omega}]$ is vertical vector field, it follows  that
	{\allowdisplaybreaks
		\begin{align*}
			\tilde{g}\left( \operatorname{\textrm{II}}(T_{\lambda\mu},T_{\nu\omega}),X \right)
			&= \tilde{g}\left( \tilde{\nabla}_{T_{\lambda\mu}} T_{\nu\omega},X \right)\\
			&=\frac{1}{2}\left\{ -\tilde{g}([T_{\lambda\mu},X],T_{\nu\omega})-\tilde{g}([T_{\nu\omega},X],T_{\lambda\mu}) \right\}= 0.
	\end{align*}}
	
	(3)	By definition,
	{\allowdisplaybreaks
		\begin{align*}
			\tilde{g}\left( \left(\tilde{\nabla}_Z A\right)_X Y,T_{\lambda\mu} \right) 
			&=
			Z\tilde{g}\left( \tilde{\nabla} _{X} Y,T_{\lambda\mu} \right)
			-\tilde{g}\left( \tilde{\nabla}_{\left(\tilde{\nabla}_{Z}X\right)^{H}}Y,T_{\lambda\mu} \right)\\
			&\quad -\tilde{g}\left( \tilde{\nabla} _{X} \left(\tilde{\nabla}_{Z}Y\right)^{H},T_{\lambda\mu} \right)-\tilde{g}\left( \left( \tilde{\nabla} _{X} Y \right)^V, \tilde{\nabla}_Z T_{\lambda\mu}  \right).
	\end{align*}}
	By Lemma \ref{lem-FM-basic-prop} (2), the last term is $ 0 $. For the other terms, let us observe that by Kouszul's formula $ \tilde{g}\left(   \tilde{\nabla}_X Y , Z \right) =g\left( \nabla_{(\pi_{*}X)} (\pi_{*}Y) ,\pi_{*}Z   \right) $, which implies 
	$$ \pi_{*}\left(   \nabla_X Y \right)  = \nabla_{(\pi_{*}X)} (\pi_{*}Y) .$$ 
	Let $ \sigma(t) $ be an integral curve of $ \pi_{*}(Z) $ starting at point $ p $. Then, the integral curve of $ Z $ starting at point $ (p,e) $ can be represented by $ (\sigma(t),e(t)) $, where $ e(t)=(e_1(t),\cdots,e_n(t)) $ satisfies $ e(0)=e $ and $ \nabla^\epsilon_{\sigma'(t)}e_i(t)=0 $, $ i= 1,\cdots,n $. 
	By Lemma \ref{lem-FM-basic-prop} (1), we have
	{\allowdisplaybreaks
		\begin{align*}
			Z\tilde{g}\left( \tilde{\nabla} _{X} Y,T_{\lambda\mu} \right)
			&= \frac{1}{\sqrt{2}}(\pi_{*}Z)R_\epsilon(\pi_{*}X,\pi_{*}Y,e_\lambda(t),e_\mu(t)),\\
			\tilde{g}_{(p,e)}\left( \tilde{\nabla}_{\left(\tilde{\nabla}_{Z}X\right)^{H}}Y,T_{\lambda\mu} \right)
			&= \frac{1}{\sqrt{2}}R_\epsilon\left( \pi_{*}\left(\tilde{\nabla}_Z X\right),\pi_{*}Y,e_\lambda,e_\mu \right)\\
			&= \frac{1}{\sqrt{2}}R_\epsilon\left( \nabla_{(\pi_{*}Z)} (\pi_{*}X),\pi_{*}Y,e_\lambda,e_\mu \right), \\
			\tilde{g}_{(p,e)}\left( \tilde{\nabla} _{X} \left(\tilde{\nabla}_{Z}Y\right)^{H},T_{\lambda\mu} \right)
			&= \frac{1}{\sqrt{2}}R_\epsilon\left(\pi_{*}X,\pi_{*}\left(\tilde{\nabla}_Z Y\right),e_\lambda,e_\mu \right)\\
			&= \frac{1}{\sqrt{2}}R_\epsilon\left(\pi_{*}X, \nabla_{(\pi_{*}Z)} (\pi_{*}Y) ,e_\lambda,e_\mu \right).
	\end{align*}}
	Hence,
	{\allowdisplaybreaks
		\begin{align*}
			\tilde{g}_{(p,e)}\left( (\tilde{\nabla}_Z A)_X Y,T_{\lambda\mu} \right) 
			&= \frac{1}{\sqrt{2}}\left\{ \left( \nabla R_\epsilon \right)(\pi_{*}Z,\pi_{*}X,\pi_{*}Y,e_{\lambda},e_{\mu})\right. \tag*{}\\
			&\quad
			+ R_\epsilon \left(\pi_{*}X,\pi_{*}Y, \nabla_{(\pi_{*}Z)}e_{\lambda}(t)  ,e_{\mu}\right) \\
			&\left.\quad
			+ R_\epsilon \left(\pi_{*}X,\pi_{*}Y,e_{\lambda},\nabla_{(\pi_{*}Z)}e_{\mu
			}(t)  \right) \right\} 
	\end{align*}}
	Since $ \nabla^\epsilon_{\sigma'(t)}e_i(t)=0 $, the equation \eqref{equ-A-tensor-1} in the tensor form holds.
	
	For the equation \eqref{equ-A-tensor-2},
	{\allowdisplaybreaks
		\begin{align*}
			\tilde{g}\left( (\tilde{\nabla}_{T_{\lambda\mu}}A)_{X}X,T_{\nu\omega} \right)
			&=
			\tilde{g}\left( \tilde{\nabla}_{T_{\lambda\mu}}\left( \tilde{\nabla}_{X}X \right)^{V},T_{\nu\omega} \right)
			-\tilde{g}\left( \tilde{\nabla}_{(\tilde{\nabla}_{T_{\lambda\mu}}X)^{H}}X,T_{\nu\omega} \right)\\
			&\quad
			- \tilde{g}\left( \tilde{\nabla}_{X}\left( \tilde{\nabla}_{T_{\lambda\mu}}X \right)^{H},T_{\nu\omega} \right)\\
			&=0,
	\end{align*}}
	where the first term and the sum of last two terms vanish because A-tensor is skew-symmetric.
\end{proof}

We are ready to prove Theorem \ref{thm-Ric-curvature-bounded}.
By the fact that the projection $ \pi : (FM^{g_\epsilon},\tilde{g})  \to (M,g) $ is a Riemannian submersion, the curvature tensor of $ (FM^{g_\epsilon},\tilde{g}) $ will be calculated by O'Neill formula \cite{O'Neill1966} which are reduced to the following equations. 
Let $ X,Y,Z,H $ be horizontal vector fields, and $ U,V,W,F $ be vertical vector fields. Then, the curvature tensor $ \tilde{R} $ of $ (FM^{g_\epsilon},\tilde{g}) $ satisfies
{\allowdisplaybreaks
	\begin{eqnarray}
		\tilde{g} \left(\tilde{R}(U,V)W, X\right)
		&=& 0,\label{equ-O'Neill-VVVH}\\
		\tilde{g} \left(\tilde{R}(X,Y)Z, H\right)
		&=&
		R \left(\pi_{*}(X),\pi_{*}(Y),\pi_{*}(Z), \pi_{*}(H)\right)\label{equ-O'Neill-HHHH}\\
		&\;&\nonumber
		 +2\tilde{g} \left( A_{X}Y,A_{Z}H\right)\\
		&\;&\nonumber
		-\tilde{g} \left( A_{Y}Z,A_{X}H\right)
		-\tilde{g} \left( A_{Z}X,A_{Y}H\right) ,\\
		\tilde{g} \left(\tilde{R}(X,Y)Z, V\right)
		&=&
		-\tilde{g} \left( (\tilde{\nabla}_{Z}A)_{X}Y,V\right),\label{equ-O'Neill-HHHV}\\
		\tilde{g} \left(\tilde{R}(X,V)Y, W\right)
		&=&
		-\tilde{g} \left( (\tilde{\nabla}_{V}A)_{X}Y,W\right)
		-\tilde{g} \left( A_{X}V,A_{Y}W\right) .\label{equ-O'Neill-HVHV}
\end{eqnarray}}

\begin{proof}[Proof of Theorem \ref{thm-Ric-curvature-bounded}]
	~
	
	Assume $E_{1},\cdots,E_{n} $  are horizontal orthonormal vector fields.
	Then, for any unit vector $ X\in T(FM^{g\epsilon}) $, the Ricci curvature of $ (FM^{g_\epsilon},\tilde{g}) $ in the direction $ X $ is \begin{equation}\label{equ-Ric-1}
		{\allowdisplaybreaks
			\begin{aligned}
				\sum_{i=1}^{n}\widetilde{\operatorname{Ric}}\left( X,X \right)
				&=
				\sum_{i=1}^{n}\widetilde{R}\left( X,E_{i} ,E_{i},X \right)
				+
				\sum_{\lambda<\mu}\widetilde{R}\left( X,T_{\lambda\mu} ,T_{\lambda\mu},X \right)\\
				&= \sum_{i=1}^{n}\widetilde{R}\left( X^{H},E_{i} ,E_{i},X^{H} \right)
				+\sum_{i=1}^{n}\widetilde{R}\left( X^{V},E_{i} ,E_{i},X^{V} \right) \\
				&\quad +\sum_{\lambda<\mu}\widetilde{R}\left( X^{H},T_{\lambda\mu} ,T_{\lambda\mu},X^{H} \right)+
				\sum_{\lambda<\mu}\widetilde{R}\left( X^{V},T_{\lambda\mu} ,T_{\lambda\mu},X^{V} \right)\\
				&\quad +2\sum_{i=1}^{n}\tilde{R}\left( X^H,E_{i} ,E_{i},X^{V}  \right)
				+ 2\sum_{\lambda<\mu}\widetilde{R}\left( X^{V},T_{\lambda\mu} ,T_{\lambda\mu},X^{H} \right).
		\end{aligned}}
	\end{equation}
	Using O'Neill formula \eqref{equ-O'Neill-HHHH} 
	and Lemma \ref{lem-FM-basic-prop} (1), we have
	{\allowdisplaybreaks
		\begin{eqnarray*}
			&\;& \sum_{i=1}^{n}\widetilde{R}_{(p,e)}\left( X^{H},E_{i} ,E_{i},X^{H}\right)\\
			&=&
			\operatorname{Ric}_{p}\left( \pi_{*}(X^{H}),\pi_{*}(X^{H}) \right)
			-
			3 \sum_{i=1}^{n}\tilde{g}_{(p,e)}\left( (\widetilde{\nabla}_{X^{H}}E_{i})^{V}, (\widetilde{\nabla}_{X^{H}}E_{i})^{V} \right)\\
			&=&\operatorname{Ric}_{p}\left( \pi_{*}X,\pi_{*}X \right) - \sum_{i,j=1}^{n}\frac{3}{4}g_{\epsilon,p}\left(R^{\epsilon} \left(\pi_{*}X,\pi_{*}E_i \right)e_{j}, R^{\epsilon} \left(\pi_{*}X,\pi_{*}E_i \right)e_{j} \right).
	\end{eqnarray*}}
	By the assumptions $ |\operatorname{Ric}_g |\le n-1 $ (resp. $ \operatorname{Ric}_g \ge -(n-1) $) and the condition (3), the first term of right side of \eqref{equ-Ric-1} has absolute value $ \le C(n, \epsilon,k) $ (resp. the first term of right side of \eqref{equ-Ric-1} is $ \ge -C(n, \epsilon,k) $).
	
	For the second and third terms of right side of \eqref{equ-Ric-1},
	it suffices to consider $ \widetilde{R}\left( X^{H},T_{\lambda\mu} ,T_{\nu\omega},X^{H} \right) $.
	By Lemma \ref{lem-FM-basic-prop}, the O'Neill formula \eqref{equ-O'Neill-HVHV} for $ \widetilde{R}\left( X^{H},T_{\lambda\mu} ,T_{\nu\omega},X^{H} \right) $ is written as
	{\allowdisplaybreaks
		\begin{align*}
			\tilde{R}_{(p,e)}\left( X^{H},T_{\lambda\mu} ,T_{\nu\omega},X^{H} \right)
			&=
			\widetilde{g}_{(p,e)}
			\left( 
			(\tilde{\nabla}_{X^{H}}T_{\lambda\mu})^{H},(\widetilde{\nabla}_{X^{H}}T_{\nu\omega})^{H}
			\right)	\\
			&=
			\sum_{i=1}^{n}\frac{1}{2}R_{\epsilon}(\pi_{*}X,\pi_{*}E_i,e_{\lambda},e_{\mu})R_{\epsilon}(\pi_{*}X,\pi_{*}E_i,e_{\nu},e_{\omega}).
	\end{align*}}
	which has absolute value $ \le C(n, \epsilon,k) $ by condition (3). 
	
	Since the metric on each fiber is induced by a canonical bi-invariant metric $ b $ on $O(n)$, by Lemma \ref{lem-FM-basic-prop} (2) the absolute value of fourth term $ \widetilde{R}\left( X^{V},T_{\lambda\mu} ,T_{\lambda\mu},X^{V} \right) $  is bounded by the sectional curvature of $ (O(n),b) $.
	
	By O'Neill formula \eqref{equ-O'Neill-HHHV} and Lemma \ref{lem-FM-basic-prop} (3),
	{\allowdisplaybreaks
		\begin{align*}
			\tilde{R}\left( X^H,E_{i} ,E_{i},T_{\lambda\mu}  \right)
			&=-\frac{1}{\sqrt{2}}\left\{ \left( \nabla R^\epsilon \right)(\pi_{*}E_i,\pi_{*}(X),\pi_{*}E_i,e_{\lambda},e_{\mu})\right.\\
			&\quad
			+ R^\epsilon (\pi_{*}(X),\pi_{*}E_i,(\nabla-\nabla^\epsilon)\left( \pi_{*}E_i,e_{\lambda}  \right),e_{\mu})\\
			&\left.\quad
			+ R^\epsilon \left(\pi_{*}(X),\pi_{*}E_i,e_{\lambda},(\nabla-\nabla^\epsilon)\left( \pi_{*}E_i,e_{\mu
			}  \right)\right) \right\} .
	\end{align*}}
	By the assumptions (1), (2), (4), the norm of  $ \nabla R^\epsilon $ under metric $ g_\epsilon $ is bounded by some constant $ C(n,\epsilon, \delta, K) $. Hence, $ \tilde{R}\left( X^H,E_{l} ,E_{l},T_{\lambda\mu}  \right) $ is bounded by some constant $ C(n,\epsilon, \delta,k, K) $.
	
	By O'Neill formula \eqref{equ-O'Neill-VVVH}, the last term vanishes.
\end{proof}

\begin{remark}
	If in addition $ |\sec_{g} |\le 1 $, 
	then the section curvature of $ (FM^{g_\epsilon},\tilde{g}) $ in Theorem \ref{thm-Ric-curvature-bounded} is bounded two-sidedly by $ C(n, \epsilon,\delta, k, K) $. 
\end{remark}

\section{Proof of the main Theorem} \label{sct-5}
Under the assumptions of Theorem \ref{thm-main}, by Theorem \ref{thm-FM-Ric-bounded} the Ricci curvature at a point $ (p_i,e_i) \in (FM_i^{g_{i,\epsilon}},\tilde{g}_i) $ has a bound that depends on the $ C^{1,\alpha} $-harmonic radius $r_h(p_i)$ at its projection $ p_i \in (M_i,g_i) $, which by Colding \cite{Colding1997} (see also Cheeger-Colding \cite[Theorem 7.2]{Cheeger-Colding1997}) depends on the distance from $p_i$ to the singular set $ S_X $ of $ X $. It is natural to divide all Cauchy sequence $\{p_i\in (M_i,g_i)\} $ into two classes.
One is that the $ C^{1,\alpha} $-harmonic radius $ r_h(p_i) $ at $p_i$ is $> r_0>0 $.  Another satisfies that $r_h(p_i) $ tends to $ 0 $ as $ i\to \infty $. 

For the first class, let us observe the subset $ M_{i,r_0}\subset (M_i,g_i) $ consisting of all points with $ C^{1,\alpha} $-harmonic radius bigger than $r_0$, and the Gromov-Hausdorff convergence of its orthonormal frame bundle $ (FM_{i,r_0}^{g_{i,\epsilon}},\tilde{g}_i) $.

\begin{proposition}\label{prop-regular-pt}
	Under the assumptions of Theorem \ref{thm-main}, let $ (M_i,g_i)\xrightarrow{GH} (X,d) $ and $ (FM_i^{g_{i,\epsilon}},\tilde{g}_i)\xrightarrow{GH} (Y,d_Y) $, where $ g_{i,\epsilon} $ is the smoothed metric of $ g_i $ via Proposition \ref{prop-nearby-metric}. Then, $ M_{i,r_0}$ and its orthonormal frame bundle $ (FM_{i,r_0}^{g_{i,\epsilon}},\tilde{g}_i) $  satisfies 
	$$\begin{CD}
		(FM_{i,r_0}^{g_{i,\epsilon}},\tilde{g}_i,O(n))@>eqGH>> (FX_{r_{0}}^{g_{\infty,\epsilon}}, \tilde{g}_\infty, O(n))\\ 
		@VV\pi_i V @VV\pi_\infty V\\
		(M_{i,r_0},g_i)@>C^{1,\alpha}>> (X_{r_{0}},g_\infty),
	\end{CD}
	$$
	where $(FM_{i,r_0}^{g_{i,\epsilon}},\tilde{g}_i) $  $O(n)$-equivariantly $C^{1,\alpha}$-converges to the orthonormal frame bundle $(FX_{r_{0}}^{g_{\infty,\epsilon}}, \tilde{g}_\infty)$ of $(X_{r_0},g_\infty)$ equipped with the horizontal distribution by the limit $g_{\infty,\epsilon}$ of $g_{i,\epsilon}$, and   and  $X_{r_{0}}$ is isometric to the quotient space $(FX_{r_{0}}^{g_{\infty,\epsilon}},\tilde{g}_\infty)/O(n)$. In particular, $(FX_{r_{0}}^{g_{\infty,\epsilon}}, \tilde{g}_\infty)$ is a $C^{1,\alpha}$-Riemannian manifold.
\end{proposition}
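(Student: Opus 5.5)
We first pass to the limit on the base, then build the limit frame bundle directly from the limiting data, and finally compare it with the $\tilde g_i$ through the bundle isomorphisms of Section 2.

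\textbf{Step 1: base convergence.} On $M_{i,r_0}$ the $C^{1,\alpha}$-harmonic radius is at least $r_0$. By Anderson's $C^{1,\alpha}$-compactness for $|\operatorname{Ric}|\le n-1$, after passing to a subsequence there are harmonic charts $\varphi_{i,\tau}:B_{r_0}(0)\to M_i$ with uniform $C^{1,\alpha}$ bounds on $g_i$. Hence $(M_{i,r_0},g_i)$ converges in $C^{1,\alpha'}$ to a $C^{1,\alpha}$-Riemannian manifold $(X_{r_0},g_\infty)$ with $X_{r_0}\subset R_X$. Concretely, there are diffeomorphisms $\Phi_i$ from open neighborhoods of compact exhaustions of $X_{r_0}$ into $M_i$ such that $\Phi_i^*g_i\to g_\infty$ in $C^{1,\alpha'}$.

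For the smoothed metrics, Proposition \ref{prop-nearby-metric} applied with $r_p\ge r_0$ gives $e^{-\epsilon}g_i\le g_{i,\epsilon}\le e^{\epsilon}g_i$ and $|\nabla^{\epsilon,k}R^\epsilon|\le c(n,k,\epsilon)r_0^{-k-2}$. Combined with $|\nabla-\nabla^\epsilon|\le c r_0^{-1}$, these give uniform $C^{k}$ bounds on $\Phi_i^*g_{i,\epsilon}$ in the harmonic charts, for instance via harmonic coordinates of $g_{i,\epsilon}$ itself. By Arzel\`a--Ascoli, after a further subsequence $\Phi_i^*g_{i,\epsilon}\to g_{\infty,\epsilon}$ in $C^{k}$ for every $k$, and $g_{\infty,\epsilon}$ is smooth on $X_{r_0}$ and $e^{\epsilon}$-close to $g_\infty$.

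\textbf{Step 2: the limit bundle.} Define $FX_{r_0}^{g_{\infty,\epsilon}}$ with the horizontal distribution of $\nabla^{g_{\infty,\epsilon}}$, and let $\tilde g_\infty$ be the lift of $g_\infty$ as in Definition \ref{defn-admissible-lifting-metric}. The connection form is smooth and $g_\infty$ is $C^{1,\alpha}$, so formula \eqref{dfn-FM-metric} shows $\tilde g_\infty$ is a $C^{1,\alpha}$-metric on a smooth manifold. By Lemma \ref{lem-FM-metric}(1),(3), $\tilde g_\infty$ is $O(n)$-invariant and $\pi_\infty$ is a Riemannian submersion. Since $O(n)$ acts freely, the quotient $FX_{r_0}^{g_{\infty,\epsilon}}/O(n)$ with the quotient metric is isometric to $(X_{r_0},g_\infty)$, because a Riemannian submersion with $O(n)$-orbit fibers realizes distances through horizontal lifts.

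\textbf{Step 3: comparison with $\tilde g_i$.} The differential of $\Phi_i$ induces an $O(n)$-equivariant map
\[
\hat\Phi_i:=\alpha^{1/2}_{\Phi_i^*g_{i,\epsilon},\,g_{\infty,\epsilon}}\ \text{composed with}\ (d\Phi_i),
\]
sending $FX^{g_{\infty,\epsilon}}$ to $FM_i^{g_{i,\epsilon}}$. Pulling back, $\hat\Phi_i^*\tilde g_i$ is the lift of $\Phi_i^*g_i$ to $FX^{g_{\infty,\epsilon}}$ with the pulled-back horizontal distribution of $\Phi_i^*g_{i,\epsilon}$. Lemma \ref{lem-FM-convergence}(2) and Lemma \ref{lem-FM-metric}(6) then give $\hat\Phi_i^*\tilde g_i\to\tilde g_\infty$. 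The same coordinate formulas give the higher-order convergence: the connection form depends on the Christoffel symbols of $\Phi_i^*g_{i,\epsilon}$ and on $\alpha^{1/2}$, both of which converge in $C^k$, while the horizontal part involves $\Phi_i^*g_i\to g_\infty$ in $C^{1,\alpha'}$. Hence $\hat\Phi_i^*\tilde g_i\to\tilde g_\infty$ in $C^{1,\alpha'}$. This is an $O(n)$-equivariant $C^{1,\alpha}$-convergence, which in particular implies equivariant pointed GH convergence on compact subsets and makes the diagram commute.

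\textbf{Main obstacle.} I expect the hard part to be upgrading Lemma \ref{lem-FM-metric}(6) from $C^0$ to $C^{1,\alpha}$. This needs $C^{2}$-control of $g_{i,\epsilon}$, which is why the smoothing is required: the Christoffel symbols of $g_i$ alone are only $C^{0,\alpha}$. I would check it directly in the local coordinates of Lemma \ref{lem-FM-convergence}.

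A secondary issue is the boundary of $M_{i,r_0}$. To avoid boundary effects, I would work on $M_{i,r_0/2}$ and restrict to compact subsets of $X_{r_0}$. Identifying this limit with the corresponding open subset of $Y$ uses that intrinsic and restricted distances agree locally, at scales small compared with $r_0$.
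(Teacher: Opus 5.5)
\textbf{There is a genuine gap in the regularity part of your argument.} Steps 2 and 3 rely on a claim from Step 1: that the \emph{same} diffeomorphisms $\Phi_i$ give $C^{1,\alpha'}$-convergence of $\Phi_i^*g_i$ and $C^k$-convergence of $\Phi_i^*g_{i,\epsilon}$ (you need at least $C^{2,\alpha'}$). The available estimates do not justify this.

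The bounds $|\nabla^{\epsilon,k}R^\epsilon|\le c\,r_0^{-k-2}$ control $g_{i,\epsilon}$ only in \emph{its own} harmonic coordinates. Your $\Phi_i$, however, come from $g_i$-harmonic charts. Proposition \ref{prop-nearby-metric} links the two metrics only through $e^{-\epsilon}g_i\le g_{i,\epsilon}\le e^{\epsilon}g_i$ and the pointwise bound $|\nabla-\nabla^\epsilon|\le c\,r_0^{-1}$.
\begin{itemize}
\item In $g_i$-harmonic coordinates this gives $\partial g_{i,\epsilon}\in L^\infty$, i.e.\ only Lipschitz control.
\item Symmetrically, $g_i$ is only Lipschitz-controlled in $g_{i,\epsilon}$-harmonic coordinates.
\item A H\"older bound on $\nabla-\nabla^\epsilon$ would need control of its derivative. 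The natural identity relating that derivative to $R^\epsilon-R$ involves the full curvature of $g_i$, which is not bounded.
\end{itemize}
Yet the vertical part $b(\omega,\omega)$ of $\tilde g_i$ contains the Christoffel symbols of $g_{i,\epsilon}$, and the bundle coordinates on $FM_i^{g_{i,\epsilon}}$ are built from $g_{i,\epsilon}$-orthonormal frames. So a direct coordinate proof of $C^{1,\alpha'}$-convergence needs a single chart in which $g_i$ is uniformly $C^{1,\alpha}$ \emph{and} $g_{i,\epsilon}$ is uniformly $C^{2,\alpha}$. Nothing available provides such a chart.

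The same problem affects Step 2. There $g_\infty$ and $g_{\infty,\epsilon}$ are regular with respect to a priori different atlases, so ``$\omega$ smooth and $g_\infty$ of class $C^{1,\alpha}$'' does not make $\tilde g_\infty$ a $C^{1,\alpha}$ metric. Even the $C^0$ part of Step 3 fails in $g_i$-charts: there $\Phi_i^*g_{i,\epsilon}$ is only Lipschitz-bounded, so you do not get the $C^1$-convergence that Lemma \ref{lem-FM-convergence}(2) requires.

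\textbf{The missing idea is the one Section \ref{sct-4} was written for, which your proposal never uses.} The paper uses Lemma \ref{lem-FM-metric}(6) only at the $C^0$ level, to identify the Gromov--Hausdorff limit as $(FX_{r_0}^{g_{\infty,\epsilon}},\tilde g_\infty)$. That lemma needs only $C^0$-convergence of $g_i$ and $C^1$-convergence of $g_{i,\epsilon}$. You can arrange both in $g_{i,\epsilon}$-adapted charts, where $g_{i,\epsilon}$ converges smoothly and $g_i$ is uniformly Lipschitz.

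The $C^{1,\alpha}$ conclusion then comes from Theorem \ref{thm-FM-Ric-bounded}. Over $M_{i,r_0}$ one has $|\operatorname{Ric}_{\tilde g_i}|\le C(n,\epsilon,r_0)$. Anderson's $C^{1,\alpha}$-convergence theorem, applied to the frame bundles themselves, then gives $C^{1,\alpha}$-convergence in harmonic coordinates of $\tilde g_i$ and a $C^{1,\alpha}$-Riemannian limit. Theorem \ref{thm-Ric-curvature-bounded} needs only the pointwise quantities $|g-g_\epsilon|$, $|\nabla-\nabla^\epsilon|$, $|\sec_{g_\epsilon}|$ and $|\nabla^\epsilon R^\epsilon|$, which is exactly what the smoothing supplies. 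This bypasses the common-chart problem entirely.

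To apply Anderson's theorem you would also need a local non-collapsing bound for $\tilde g_i$. The paper leaves this implicit; it should follow from the fibers being totally geodesic copies of $(O(n),b)$ over a base with harmonic radius $\ge r_0$.

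With Step 3 kept at the $C^0$ level and your ``main obstacle'' computation replaced by this curvature bound plus Anderson, the rest of your argument goes through. In particular, your quotient argument in Step 2 and the equivariance part match the paper.
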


\begin{proof}[Proof of Proposition \ref{prop-regular-pt}]
	~
	
	By $ C^{1,\alpha} $-convergence theorem \cite{Anderson1990}, $ (M_{i,r_0},g_i) $ converges to a $ C^{1,\alpha} $-Riemannian manifold $ (X_{r_0},g_\infty) $, and for $ i $ large there is a diffeomorphism from $ (X_{r_0},g_\infty)$ to $ (M_{i,r_0},g_i) $  such that the pullback metric converges to $g_\infty$ in the $C^{1,\alpha}$-norm. At the same time, $ (M_{i,r_0},g_{i,\epsilon}) $ has bounded sectional curvature by Proposition \ref{prop-nearby-metric}. By Cheeger-Gromov convergence theorem \cite{Cheeger1970, Gromov1981},  the smoothed metric $ g_{i,\epsilon} $ can also be pull back to $ X_{r_0} $ by a diffeomorphism such that it converges to $g_{i,\infty}$ in the $C^{1,\alpha}$-norm.

	By Lemma \ref{lem-FM-metric} (6), $ (FM_{i,r_0}^{g_{i,\epsilon}},\tilde{g}_i) $ Gromov-Hausdorff converges to $ (FX_{r_{0}}^{g_{\infty,\epsilon}}, \tilde{g}_\infty) $.
	Combining with the fact that the Ricci curvature of $ (FM_{i,r_0}^{g_{i,\epsilon}},\tilde{g}_i) $ has absolute value $ \le C(n,\epsilon,r_0) $ by Theorem \ref{thm-FM-Ric-bounded}, the Gromov-Hausdorff limit space $ (FX_{r_{0}}^{g_{\infty,\epsilon}},\tilde{g}_\infty) $ is a $ C^{1,\alpha} $-Riemannian manifold by $ C^{1,\alpha} $-convergence theorem \cite{Anderson1990}.
	
	Since each fiber $ \pi_\infty^{-1}(x) $, $ x\in X_{r_0} $ under the intrinsic metric is isometric to $ (O(n),b) $ by Lemma \ref{lem-FM-basic-prop} (2), the group $ O(n) $ acting on $ (FX_{r_{0}}^{g_{\infty,\epsilon}},\tilde{g}_\infty) $ is effective whose metric $ \tilde{g}_\infty $ is $O(n)$-invariant. It follows that $(FM_{i,r_0}^{g_{i,\epsilon}},\tilde{g}_i,O(n))$ equivariant Gromov-Hausdorff converges to $ (FX^{g_{\infty,\epsilon}}_{r_{0}}, \tilde{g}_\infty, O(n))$. Since the projection $ \pi_\infty $ is a Riemannian submersion, $ X_{r_{0}} $ is isometric to $ Y_{r_0}/O(n) $.
\end{proof}

\begin{remark}\label{rmk-local-coincide}
	Though the length metric on $(FX_{r_{0}}^{g_{\infty,\epsilon}},\tilde g_\infty)$ is different from $(FR_X^{g_{\infty,\epsilon}},\tilde g_\infty)$, they locally coincide with each other around any point.
\end{remark}

By Proposition \ref{prop-regular-pt}, we only need to consider Cauchy sequences of second class in order to prove Theorem \ref{thm-main}.

\begin{proof}[Proof of Theorem \ref{thm-main}]
	~
	
	Since the Ricci curvature of $ (M_i,g_i) $ satisfies $ |\operatorname{Ric}_{g_i}| \le n-1 $, by Bishop-Gromov's relative volume comparison the $ \epsilon $-net set $ M_i(\epsilon) $ of $ (M_i,g_i) $ is finite with number $ |M_i(\epsilon)|\le C(\epsilon,n,D) $. Meanwhile, for all $ i $ and any $ p_i \in M_i $, the fiber $ \pi_{i}^{-1}(p_i) $ equipped with its intrinsic metric is isometric to the compact Lie group $ (O(n),b) $, where $ b $ is a bi-invariant metric defined in Definition \ref{defn-admissible-lifting-metric}. Hence, $ (FM_i^{g_{i,\epsilon}}, \tilde{g}_i) $ satisfies 
	\begin{gather*}
		\operatorname{diam}(FM_i^{g_{i,\epsilon}}, \tilde{g}_i)\le D+\operatorname{diam}(O(n),b) ,\\
		|FM_{i}(\epsilon)|\le C(\epsilon,n,D) .
	\end{gather*}
	By Gromov's precompactness principle, $ (FM_i^{g_{i,\epsilon}}, \tilde{g}_i, O(n)) $ equivariantly Gromov-Hausdorff converges to a metric space $ (Y,d_Y,G) $ after passing to a subsequence.
	
	(1) By Proposition \ref{prop-regular-pt}, the limit group action by $G$ coincides with the standard $O(n)$-action on the orthonormal fiber bundle $(FR_X^{g_{\infty,\epsilon}},\tilde g_\infty)$ of the regular set $R_X$ of $(X,d)$, and $FR_X^{g_{\infty,\epsilon}}$ is open and dense in $(Y,d_Y)$. It follows that the limit group $G$-action $G$ is by $O(n)$.  By (1), (3) of Lemma \ref{lem-FM-metric}, the new lifting metric $ \tilde{g}_i $ on $FM_i^{g_{i,\epsilon}}$ is $O(n)$-invariant, and the canonical projection $\pi_i:(FM_i^{g_{i,\epsilon}},\tilde g_i)\to (M_i,g_i)$ is a Riemannian submersion.	Hence we have the following commutative diagram 
	$$\begin{CD}
		(FM_i^{g_{i,\epsilon}}, \tilde g_i, O(n))@>\text{eqGH}>> (Y, d_Y, O(n))\\
		@VV \pi_i V @VV \pi_\infty V\\
		(M_{i},g_i)@>\text{GH}>> (X,d_X)=(Y,d_Y)/O(n).
	\end{CD}
	$$
	
	By Lemma \ref{lem-FM-fiber-limit}, for any $y\in Y$ and $x=\pi_\infty(y)$, there is an $1$-Lipschitz and $O(n)$-equivariant homeomorphism from $((O(n),b)/H_{\infty,x},O(n))$ to the orbit fiber $(\pi_\infty^{-1}(x),d_Y,O(n))$ equipped with the restricted distance and $O(n)$-action from $(Y,d_Y)$, where $ H_{\infty,x} $ is the infinitesimal holonomy group  passed from $(M_i,g_i)$ at $\pi_\infty^{-1}(x)$. Hence, $H_{\infty,x}$ coincides with the isotropy group $O(n)_y$ at $y$ up to conjugacy in $O(n)$.
	
	(2) By Proposition \ref{prop-regular-pt}, the isotropy group $O(n)_y$ at any point $y\in \pi_\infty^{-1}(R_X)$ is trivial. Hence all points with nontrivial isotropy groups are contained in $\pi_\infty^{-1}(S_X)$.
	
	By the following claim, if a point $y\in \pi_\infty^{-1}(S_X)$ is regular, then it has finite isotropy group.  
	
	\begin{claim}\label{clm-singular-cone}
		If a tangent cone at $y$ is isometric to the Euclidean space $ \mathbb{R}^{k} $, $ k=n+\frac{n(n-1)}{2} $, then the isotropy group $O(n)_y$ is finite.  
	\end{claim}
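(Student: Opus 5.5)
We argue by contradiction via the orbit structure. Suppose a tangent cone $T_yY$ at $y$ is isometric to $\mathbb{R}^k$, $k=n+\frac{n(n-1)}{2}$, and suppose $O(n)_y$ is infinite. Since $O(n)_y$ is a closed subgroup of the compact Lie group $O(n)$, it is then a Lie subgroup of positive dimension. By Theorem \ref{thm-main} (1) (i.e. Lemma \ref{lem-FM-fiber-limit}), the orbit $O(n)\cdot y=\pi_\infty^{-1}(x)$ is homeomorphic to $O(n)/O(n)_y$ through a $1$-Lipschitz equivariant map. Hence it has topological dimension $\dim O(n)-\dim O(n)_y\le \frac{n(n-1)}{2}-1$. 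The idea is to pass the isometric $O(n)$-action to the tangent cone, and to compare dimensions of the quotient $T_yY/O(n)_y$ with the tangent cone of $X=Y/O(n)$ at $x$, which has dimension at most $n$.

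First, along the blow-up sequence $(Y,y,\lambda_jd_Y)\to (\mathbb{R}^k,0)$, take an equivariant subsequential limit of the $O(n)$-actions. The group $O(n)$ acts isometrically with bounded orbit-diameter near $y$ after rescaling. So the limit group $G_\infty$ is a closed group of isometries of $\mathbb{R}^k$, and it fixes $0$ up to the part coming from directions tangent to the orbit. More precisely, I would use the standard slice picture: the limit of $(O(n)\cdot y,\lambda_j d_Y)$ gives an orbit of a limit group $\hat G$ acting on $\mathbb{R}^k$ through $0$. The stabilizer $O(n)_y$ survives as a compact subgroup $K\le O(k)$ fixing $0$. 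Along a suitable subsequence, $K$ contains a copy of the positive-dimensional $O(n)_y$, because $O(n)_y$ acts on $Y$ fixing $y$ and commutes with the rescaling. Second, the quotient $(Y,\lambda_jd_Y)/O(n)=(X,x,\lambda_jd_X)$ converges to a tangent cone $C_x$ of $X$, which is a metric cone of Hausdorff dimension $n$. By equivariant GH convergence (Fukaya–Yamaguchi), $C_x$ is isometric to $\mathbb{R}^k/\hat G$. Third, count dimensions. The orbit $\hat G\cdot 0$ is a linear subspace $V$ of dimension at most $\dim O(n)/O(n)_y<\frac{n(n-1)}{2}$, using the bound above and that Hausdorff dimension does not increase under the $1$-Lipschitz limit. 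Therefore $\mathbb{R}^k/\hat G$ contains $V^\perp/K$, whose dimension is at least $k-\dim V-\dim(\text{principal }K\text{-orbits})$.

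The crucial point is to get a contradiction from $\dim \mathbb{R}^k/\hat G=n$. This requires $\hat G$-orbits in $\mathbb{R}^k$ to have dimension $\frac{n(n-1)}{2}$ generically. That is true, because generic orbits come from regular points, where $O(n)$ acts freely with orbits of full dimension $\frac{n(n-1)}{2}$ (Proposition \ref{prop-regular-pt}). At the same time, the orbit through $0$ has strictly smaller dimension. In a Euclidean space, an isometric action of a compact-by-translation group whose orbit through $0$ is a proper subspace $V$ of lower dimension than principal orbits forces $X$ near $x$ to look like $V\times (V^\perp/K)$ with $K$ positive-dimensional acting linearly on $V^\perp$. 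A linear action of a positive-dimensional compact group on Euclidean space has nontrivial isotropy at $0$. I would then show this contradicts the $n$-dimensionality and the structure of $C_x$, where $C_x=\mathbb{R}^{n-m}\times C(Z)$ with codimension $\ge4$ singular set.

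I expect the main obstacle to be the second step. One must ensure the stabilizer really persists in the limit as a positive-dimensional group fixing $0$, and is not absorbed into translations. I would handle this by working directly with $O(n)_y$, which acts by isometries fixing $y$ for every rescaling. Its limit action on $\mathbb{R}^k$ is then linear, and it is nontrivial by effectiveness on the dense free part. Then principal orbits of $O(n)$ near $y$ rescale to contain nontrivial $K$-orbits, which gives the dimension count $\dim (\mathbb{R}^k/\hat G)\ge k-\dim V-\dim K$ and the contradiction.
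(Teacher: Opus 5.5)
Your proposal has a genuine gap at exactly the step you call crucial: the dimension count you set up cannot produce a contradiction. Closed subgroups of $\operatorname{Isom}(\mathbb{R}^k)$ act properly. So by the slice theorem, a principal $\hat G$-orbit near $0$ has dimension $\dim V+\dim(\text{principal $K$-orbit in the slice at }0)$. You assert that principal orbits have dimension $\frac{n(n-1)}{2}$. It follows that your lower bound satisfies $k-\dim V-\dim K\le n$, and that $\dim \mathbb{R}^k/\hat G=n$ holds automatically, whatever the exceptional orbit through $0$ looks like.

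Here is a concrete model satisfying all your conditions. Take $n\ge 3$ and $m=\frac{n(n-1)}{2}-2$, and let $\hat G=\mathbb{R}^m\times SO(3)$ act on $\mathbb{R}^k=\mathbb{R}^m\times\mathbb{R}^3\times\mathbb{R}^{n-1}$ by translations and rotations. Then the principal orbits $\mathbb{R}^m\times S^2$ have dimension $\frac{n(n-1)}{2}$, the orbit through $0$ is $V=\mathbb{R}^m$, and the isotropy at $0$ is $K=SO(3)$. The quotient $[0,\infty)\times\mathbb{R}^{n-1}$ is $n$-dimensional. Excluding such configurations via the structure of $C_x$ is left as ``I would then show''. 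That step is not routine: a linear action such as the Hopf action on $\mathbb{C}^2$ already has a quotient without boundary.

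The missing idea is the one the paper's proof rests on. Near $y$ the regular fibers are totally geodesic copies of $(O(n),b)$. Under blow-up they flatten into parallel $\frac{n(n-1)}{2}$-planes, and the infinitesimal part of $O(n)$ converges to a normal translation subgroup $T\cong\mathbb{R}^{n(n-1)/2}$ of the limit group. Then $\mathbb{R}^k/T\cong\mathbb{R}^n$ already has the full dimension $n$. The paper concludes from $\dim\mathbb{R}^k/G=n$ that the limit of $O(n)_y$, which fixes $0$ and so meets $T$ trivially, is finite.

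With $T$ in hand your approach would close at once, since $\hat G\cdot 0\supseteq T\cdot 0$ is a $\frac{n(n-1)}{2}$-plane. However, your bound $\dim V<\frac{n(n-1)}{2}$ is itself unjustified, for three reasons:
\begin{itemize}
\item $V$ is a blow-up of the orbit at $y$, not the orbit itself.
\item The $1$-Lipschitz homeomorphism from $(O(n)/O(n)_y,d_b)$ gives no control of rescaled $d_Y$-balls, which may contain $d_b$-distant points.
\item Hausdorff dimension is not upper semicontinuous under blow-up.
\end{itemize}

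Likewise, ``$K$ contains a copy of $O(n)_y$ by effectiveness'' does not follow, because effectiveness need not survive blow-up. For example, an $S^1$ rotating the cusp $dr^2+r^4d\theta^2$ has trivial limit action at the tip. The paper's final passage from finiteness of the limit group to finiteness of $O(n)_y$ relies on the same kind of persistence. So on either route that step needs its own argument.
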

	
	(3) Since $\pi_\infty^{-1}(R_X)$, equipped with its length metric, is the orthonormal frame bundle $(FR_X^{g_{\infty,\epsilon}},\tilde g_\infty)$ of $(R_X,g_\infty)$, by Proposition \ref{prop-regular-pt} and Remark \ref{rmk-local-coincide}, $\pi_\infty^{-1}(R_X)$ is a $C^{1,\alpha}$-Riemannian manifold. Therefore, the singular set $S_Y$ is contained in $\pi_\infty^{-1}(S_X)$.
	
	By Lemma \ref{lem-FM-fiber-limit}, there is a $1$-Lipschitz homeomorphism from $O(n)/H_{\infty,x}$ to the fiber $\pi_\infty^{-1}(x)$ equipped with the restricted metric from $Y$. Together with Cheeger-Naber's codimension $4$ theorem \cite{Cheeger-Naber2015} (c.f. \cite[Theorem 1.15]{Jiang-Naber2021}), the preimage $\pi_\infty^{-1}(S_X)$ is at least of codimension $4$ in $Y$.
	
	(4) If $ x_i=\pi_i(y_i)\in (M_i,g_i) $ is $ r $-away from $ S_{X}$, then the $ C^{1,\alpha} $-harmonic radius at $x_i$ has a uniformly lower bound $ C(n,r) $. By Theorem \ref{thm-FM-Ric-bounded}, the Ricci curvature at $ y_i $ is uniformly bounded in absolute value depending on $ n,\epsilon,r $.
\end{proof}

\begin{proof}[Proof of Claim \ref{clm-singular-cone}]
	~
	
	Assume that some tangent cone of $ y\in  \pi_\infty^{-1}(S_X) $ is isometric to $ \mathbb{R}^{k} $, i.e., there is $ \lambda_i\to \infty $ such that 
	$$\begin{CD}
		(Y, y, \lambda_id_Y, O(n))@>\text{eqGH}>> (\mathbb{R}^{k}, 0, d_E, G)\\
		@VV\pi_\infty V @VV\operatorname{pr}_\infty V\\
		(X,x,\lambda_id_X)@>\text{GH}>> (T_xX=\mathbb{R}^{k}/G,0_*,d_{T_xX}).
	\end{CD}
	$$
	We will show that $O(n)_y$ is finite.
	
	Indeed, by the proof of Proposition \ref{prop-regular-pt}, $(R_Y=FR_X^{g_{\infty,\epsilon}}, y_i, \lambda_i \tilde g, O(n))$ converges equivariantly to the $G$-invariant open subset $(\operatorname{pr}_\infty^{-1}(R_{\mathbb R^k/G}), y, g_{E}, \mathbb R^{n(n-1)/2})$ of $\mathbb R^k$. Since every regular fiber $\pi_\infty^{-1}(x)$ of $x\in R_X$ is a totally geodesic submanifold of $Y$, $\pi_\infty^{-1}(R_{\mathbb R^k/G})$ contains parallel $n(n-1)/2$-subspaces of $\mathbb R^k$.  Hence, the infinitesimal element of $O(n)$ converges to the translation $\mathbb R^{n(n-1)/2}$ on $\mathbb R^k$, which is normal in the isometry group $\mathbb R^k\rtimes O(k)$ of $\mathbb R^k$.
	
	At the same time, $O(n)_y$ acts trivially on the fiber $\pi_\infty^{-1}(x)$. By Ascoli-Arzel\`a theorem, it converges to a subgroup $H$ of $G$. Since $H$ has fixed points in $\mathbb R^k$, it is different than the translation part of $G$. Because the quotient space $\mathbb R^k/G$ is of dimension $k-n(n-1)/2=n$, $H$ must be a finite group. So is $O(n)_y$.
\end{proof}

\begin{remark}
	If $O(n)$ acts isometrically on a manifold $M$ such that the orbits are not totally geodesic, then $O(n)$ may admit infinite isotropy groups. For example, the oriented orthonormal frame bundle of $S^3$ is $(SO(4),SO(3))$. Let $T^2$ acting on $S^3\subset \mathbb C^2$ by $\rho_{(e^{i\theta_1},e^{i\theta_2})}(z_1,z_2)=(e^{i\theta_1}z_1,e^{i\theta_2}z_2)$. Then $T^2$ acts on $SO(4)$ isometrically and freely by the tangent map of $\rho_{(e^{i\theta_1},e^{i\theta_2})}$, and  commutes with the standard action of $SO(3)$. Then the quotient space $SO(4)/T^2$ is a $4$-manifold $M$, and $M/SO(3)=S^3/T^2=[0,\pi/2]$, where the induced $SO(3)$-action on $M$ has $S^1$ isotropy groups.
\end{remark}

\begin{proof}[Proof of Theorem \ref{thm-FX}]
	~
	
	For the orthonormal frame bundle $(FX,\tilde d)$, let us view the regular set $R_X$ of $X$, which by \cite{Cheeger-Colding1997} is a $C^{1,\alpha}$-Riemannian manifold $(R_X, g_\infty)$, as a sequence of $n$-manifolds $(M_i,g_i)=(R_X,g_\infty)$ that trivially Gromov-Hausdorff converges to its completion, which by \cite[Theorem 1.20]{Colding-Naber2012} is $(X,d)$. Note that the proof of Theorem \ref{thm-main} still goes through in this case. It follows that the completion of $(FR_X^{g_{\infty,\epsilon}},\tilde g_\infty)$, which is the orthonormal frame bundle $(FX,\tilde d)$, satisfies Theorem \ref{thm-main} (1)-(3).
	
	Now let $(X_i,d_{i})$ be a sequence of non-collapsed limit spaces of $n$-manifolds with bounded Ricci curvature $|\operatorname{Ric}|\le n-1$, and $\operatorname{Vol}\ge v>0$. By the same proof of Theorem \ref{thm-main},  the equivariant Gromov-Hausdorff limit of their orthonormal frame bundles $(FX_i,\tilde d_i, O(n))$ also satisfies Theorem \ref{thm-main} (1)-(3).
	
\end{proof}

\begin{proof}[Proof of Theorem \ref{thm-FX-cncd-Y}]
	~
	
	Let $(M_i,g_i)\overset{GH}{\longrightarrow}(X,d)$ and let $x\in (X,d)$ be fixed point. Because by Lemma \ref{lem-O(n)-limit}, $H_{x}$ is determined by holonomies around $x$ of $(R_X,g_{\infty})$. Since any holonomy of $(R_X,g_{\infty})$ is a limit of that in $(M_i,g_i)$, by the definition of $H_{\infty,x}$ for $(Y,d)$, $H_{x}$ is contained in $H_{\infty,x}$ up to conjugacy in $O(n)$.
	
	For the second part of Theorem \ref{thm-FX-cncd-Y},
	by the proof of Theorem \ref{thm-main}, the regular part $(FR_X^{g_{\infty,\epsilon}},\tilde g_\infty)$ of orthonormal frame bundle $(FX, \tilde d)$ coincides with that of $(Y,d_Y)$ as $C^{1,\alpha}$-Riemannian manifolds. Thus, the only difference occurs only on their singular fiber. 
	
	If $H_{x}\subsetneq H_{\infty,x}$ up to conjugacy, then by Theorem \ref{thm-main} (1) $(FX,\tilde d, O(n))$ is not isometric to $(Y,d_Y, O(n))$ equivariantly. 
	
	Now let us assume that $H_{x}=H_{\infty,x}$. Let $x_i\in (M_i,g_i)$ and $z_i\in (R_X,g_\infty)\subset (X,d)$ be sequences that both converge to $x$. By Lemma \ref{lem-FM-dist}, the restricted metric on any fiber $\pi_i^{-1}(x_i)$ in $(FM_i^{g_{i,\epsilon}},\tilde g_i)$ is determined by a fixed bi-invariant metric $b$ on $O(n)$ and the holonomy $\operatorname{Hol}_{x_i}(\nabla^\epsilon)$ with the associated minimal length $L$. Let us represent $\pi_i^{-1}(x_i)$ (resp. $\pi_\infty^{-1}(z_i)$) with its restricted metric by $(O(n),\tilde d_i)$ (resp. $(O(n),\tilde d_{\infty,i})$), where $\tilde d_i$ (resp. $\tilde d_{\infty,i}$) is an $O(n)$-right invariant metric. Then by the $C^{1,\alpha}$-convergence of $g_i$ (resp. $g_{i,\epsilon})$ to $g_\infty$ (resp. $g_{\infty,\epsilon})$ definite away from the singular set $S_X$, the identity $\operatorname{Id}:(O(n), \tilde d_{\infty,i})\to (O(n),\tilde d_i)$ is an almost $1$-Lipschitz $O(n)$-equivariant homeomorphism. Then its limit $\varphi:(O(n)/H_x,\tilde d)\subset (FX,\tilde d)\to (O(n)/H_{\infty,x},\tilde d_Y)\subset (Y,d_Y)$, viewed as the limit fibers in $FX$ and $Y$, is $1$-Lipschitz. 
	We show that $\varphi$ is isometric. Indeed, if not, then there are $u,v\in (O(n),\tilde d)$ such that $\tilde d(u,v)>d_{Y}(u,v)$. By Lemma \ref{lem-FM-dist}, there must be additional holonomy of $(M_i,g_i)$ that cannot be passed to $(R_X,g_\infty)$, and thus $H_{x}\subsetneq H_{\infty,x}$, a contradiction. 
\end{proof}

\section{Examples}\label{sct-examples}
In this section, we give examples for Theorems \ref{thm-main}, \ref{thm-FX} and \ref{thm-FX-cncd-Y}.

\begin{example}\label{ex-Eguchi-Hanson}
	Let $ (TS^2,g) $ be the Eguchi-Hanson space which satisfies $ \operatorname{Ric}_g =0 $, and $ (F(TS^2), \tilde{g}) $ be its orthonormal frame bundle with canonical lifting metric. For any $\lambda_i\to \infty$, the rescaled Eguchi-Hanson space, $ (TS^2,p, \lambda_i^{-1}g) $ Gromov-Hausdorff converges to $ (\mathbb{R}^4, o)/\mathbb{Z}_2=(C(\mathbb{R}P^3),o) $ as $ i\to \infty $, where $o $ is the vertex of the Euclidean cone $ C(\mathbb{R}P^3)$ and $\mathbb Z^2$ acting on $\mathbb R^4$ by $\left<x\mapsto -x\right>$, i.e. 
	$$\begin{CD} (F(TS^2),\tilde p, \tilde{g_i}) @>GH>> (Y,y,d_Y),\\ @VV \pi_i V @VV \pi_\infty V\\ (TS^2,p, g_i=\lambda_i^{-1}g) @>GH>> (C(\mathbb{R}P^3), o). \end{CD}$$
	Then the following properties hold.
	
	(1) Since the holonomy group of $ (TS^2,g) $ is $ SU(2) $, whose underlying loops have lengths go to zero under $\lambda_i^{-1}g$, the infinitesimal holonomy group $ H_{\infty,o}$ in $ Y $ is $ SU(2) $ as well. Hence, the fiber $\pi_\infty^{-1}(o)$ is homeomorphic to $O(4)/SU(2)$, and all points in $\pi_\infty^{-1}(o)$ are singular in $Y$.
	
	(2) Since the regular set $C(\mathbb RP^3)\setminus \{o\}$ is flat, its holonomy group is $\{I,-I\}$, where $I$ is the identity matrix in $O(4)$. It is easy to see that the orthonormal frame bundle of $C(\mathbb RP^3)$ is the quotient of $\mathbb R^4\times O(4)=F\mathbb R^4$ by the group $\{(I,I),(-I,-I)\}$, which is a flat manifold and is different than $(Y,d_Y)$.
	
	(3) The completion of $F(C(\mathbb RP^3)\setminus \{o\})$ with its length metric is $FC(\mathbb RP^3)$. However, because the regular set $(R_Y,\tilde g_\infty)$ equipped with its length metric also is $F(C(\mathbb RP^3)\setminus \{o\})$, the completion of $(R_Y,\tilde g_\infty)$ still is isometric to $FC(\mathbb RP^3)$, and thus is not $(Y,d_Y)$. 
	
	At the same time, by Colding-Naber \cite[Theorem 1.20]{Colding-Naber2012}, for any Ricci limit space $X$, the regular set $R_X$ is weakly convex, i.e. the restricted metric coincides with the length metric on $R_X$. Hence $(Y,d_Y)$ is not a Ricci limit space.
	
	(4) The tangent cone $T_yY$ at the singular point $y$ in $(Y,d_Y)$ does not exist. Indeed, if $T_yY$ exists, then for any $R>0$ the $R$-ball at vertex $o$ contains the totally geodesic subspaces $\mathbb R^3$ which are the Gromov-Hausdorff limit of orbit $ SU(2)$ in regular fibers around $y$. Hence, for any $\epsilon>0$ the number of $\epsilon$-net of $R$-ball at vertex $o$ is infinite.
\end{example}

In the following two examples, we show that all the main results in this paper may fail when the Ricci curvature is bounded only from below. 

\begin{example}[A singular point may project to a regular point]\label{ex-lower-bound}
	Let $0<a_i<1$ be a sequence of irrational numbers which converges to $1$ increasingly as $i\to \infty$. Let $ C(S^1_{a_i}) $ be a $ 2 $-dimensional Euclidean cone, where the length of $S^1_{a_i}$ is $2\pi a_i$. By a suitable smoothing of $\epsilon_{i,j}$-neighborhood of the vertex $o$ in $ C(S^1_{a_i}) $, it becomes a sequence of $ 2 $-manifold $ (M^2_{i,j},g_{i,j}) $ with non-negative sectional curvature, such that $\epsilon_{i,j}$-Gromov-Hausdorff close to $ C(S^1_{a_i})$ with $\epsilon_{i,j}\to 0$ as $j\to \infty$.
	
	Note that, for any fixed $i$, the limit of the orthonormal frame bundles as $j\to 0$,
	$$\begin{CD}
		(FM_{i,j}, \tilde o, \tilde{g}_{i,j}, O(2)) @>eqGH>> (Y_i,y,d_{Y_i}, O(2)),\\
		@VV \pi_{i,j} V @VV \pi_{i,\infty} V\\
		(M_{i,j},o, g_{i,j}) @>GH>> (C(S^1_{a_i}), o),
	\end{CD}$$ 	
	satisfies $\pi_{i,\infty}^{-1}(o)=O(2)/SO(2)$ consists of two points, since $a_i$ is irrational and $H_{\infty, o}$, containing the rotation $r_{i}$ with angle $2a_i\pi $, is $ S^1 $.
	
	At the same time, the vertex angle $ \theta_i=2\pi a_i $ of $ C(S^1_{a_i}) $ approaches $ 2\pi $ as $ i\to \infty $. By passing to a suitable diagonal sequence, the $2$-surfaces $ (M_i,g_i)=(M^2_{i,j_i},o,g_{i,j_i}) $ and their orthonormal frame bundles $(FM_i,\tilde g_i)$ (equipped with the canonical lifting metric) satisfy 
	$$\begin{CD}
		(FM_i, \tilde o, \tilde{g}_i, O(2)) @>eqGH>> (Y,y,d_Y, O(2)),\\
		@VV \pi_i V @VV \pi_\infty V\\
		(M_i,o, g_i) @>GH>> (\mathbb{R}^2, o, g_E),
	\end{CD}$$ 	
	where $(Y,y,d_Y,O(n))$ also is the limit of $(Y_i,y_i,d_{Y_i}, O(n))$. Hence the fiber $\pi_\infty^{-1}(o)=O(2)/SO(2)$ consists only of two isolated points. And any other fiber $\pi_\infty^{-1}(v)$, $v\neq o$, is homeomorphic to $O(2)$. Moreover, $\pi_\infty^{-1}(\mathbb R^2\setminus\{o\})$ with its length metric is isometric to $F\mathbb R^2=\mathbb R^2\times O(2)$. 
	
	Therefore, $\pi_\infty^{-1}(o)=O(2)/SO(2)$ is the unique singular fiber in $Y$, and its projection on $\mathbb R^2$ is a regular point $o$.
\end{example}

\begin{example}[Orthonormal frame bundles converge to a collapsed space]\label{ex-fiber-pt}
	Let $(X_i,d_i)$ be convex polyhedron described in \cite[Examples (2)]{Otsu-Shioya1994}, where the vertices of $X_i$ has number tends to $ \infty $ and is more and more dense in $X_i$ as $ i\to \infty $, such that the singular points are dense in their Gromov-Hausdorff limit space $(X,d)$. 
	
	By a suitable construction as in \cite[Examples (2)]{Otsu-Shioya1994}, each vertex angle $\theta_i$ in $(X,d)$ can be chosen to $2\pi a_i$, where $a_i$ is irrational number. Then the infinitesimal holonomy group at any vertex is $SO(2)$. By the proof of Theorem \ref{thm-main} (1), any fiber in the limit $(Y,d_Y)$ of $(FX_i,\tilde d_i)$ is $O(2)/SO(2)$. Hence $(Y,d_Y)$ contains two components, each of which is isometric to $(X,d)$, and the limit group $G$ of $O(2)$ is $\mathbb Z_2$.
\end{example}

\end{document}